\theoremstyle{plain}
\newtheorem{thm}{Theorem}[section]
\newtheorem{lemma}[thm]{Lemma}
\newtheorem{proposition}[thm]{Proposition}
\newtheorem{cor}[thm]{Corollary}
\newtheorem{claim}[thm]{Claim}
\newtheorem{thm solo}{Theorem}
\newtheorem{prop solo}[thm solo]{Proposition}
\newtheorem{cor solo}[thm solo]{Corollary}
\newtheorem{conj solo}[thm solo]{Conjecture}
\theoremstyle{definition}
\newtheorem{definition}[thm]{Definition}
\newtheorem{example}[thm]{Example}
\theoremstyle{remark}
\newtheorem{remark}[thm]{Remark}
\numberwithin{equation}{section}
\DeclareMathOperator{\ga}{\mathfrak{g}}
\DeclareMathOperator{\ha}{\mathfrak{h}}
\DeclareMathOperator{\ma}{\mathfrak{m}}
\DeclareMathOperator{\Xs}{\mathfrak{X}}
\DeclareMathOperator{\tworlim}{2-\varinjlim}
\DeclareMathOperator{\Rlim}{R\varprojlim}
\DeclareMathOperator{\id}{id}
\DeclareMathOperator{\Hom}{Hom}
\DeclareMathOperator{\NS}{NS}
\DeclareMathOperator{\HTlog}{HTlog}
\DeclareMathOperator{\Ker}{Ker}
\DeclareMathOperator{\Eq}{Eq}
\DeclareMathOperator{\Coker}{Coker}
\DeclareMathOperator{\Ab}{Ab}
\DeclareMathOperator{\Sh}{Sh}
\DeclareMathOperator{\Sch}{Sch}
\DeclareMathOperator{\Spec}{Spec}
\DeclareMathOperator{\Spf}{Spf}
\DeclareMathOperator{\Spa}{Spa}
\DeclareMathOperator{\Spd}{Spd}
\DeclareMathOperator{\GL}{GL}
\DeclareMathOperator{\Pic}{Pic}
\DeclareMathOperator{\Lie}{Lie}
\DeclareMathOperator{\Image}{Im}
\DeclareMathOperator{\End}{End}
\DeclareMathOperator{\modulo}{mod}
\DeclareMathOperator{\Gal}{Gal}
\DeclareMathOperator{\ev}{ev}
\DeclareMathOperator{\Nr}{Nr}
\DeclareMathOperator{\rank}{rank}
\DeclareMathOperator{\cts}{cts}
\DeclareMathOperator{\an}{an}
\DeclareMathOperator{\et}{\acute{e}t}
\DeclareMathOperator{\Et}{\acute{E}t}
\DeclareMathOperator{\profet}{prof\acute{e}t}
\DeclareMathOperator{\Zar}{Zar}
\DeclareMathOperator{\rig}{rig}
\DeclareMathOperator{\alg}{alg}
\DeclareMathOperator{\RGamma}{R\Gamma}
\DeclareMathOperator{\FF}{FF}
\DeclareMathOperator{\Perf}{Perf}
\DeclareMathOperator{\LSD}{LSD}
\DeclareMathOperator{\Smd}{Smd}
\DeclareMathOperator{\Sm}{Sm}
\DeclareMathOperator{\qcqs}{qcqs}
\DeclareMathOperator{\ttt}{tt}
\DeclareMathOperator{\tor}{tor}
\DeclareMathOperator{\Exp}{Exp}
\DeclareMathOperator{\BdR+}{\mathrm{B_{dR}^+}}
\newcommand{\cj}[1]{\overline{#1}}
\DeclareMathOperator{\Adic}{Adic}
\newcommand\widecheck[1]{%
\savestack{\tmpbox}{\stretchto{%
  \scaleto{%
    \scalerel*[\widthof{\ensuremath{#1}}]{\kern-.6pt\bigwedge\kern-.6pt}%
    {\rule[-\textheight/2]{1ex}{\textheight}}
  }{\textheight}%
}{0.5ex}}%
\stackon[1pt]{#1}{\scalebox{-1}{\tmpbox}}%
}
\newcommand{\G}{\mathbb{G}}
\newcommand{\B}{\mathbb{B}}
\newcommand{\C}{\mathbb{C}}
\newcommand{\Pro}{\mathbb{P}}
\newcommand{\Z}{\mathbb{Z}}
\newcommand{\Q}{\mathbb{Q}}
\newcommand{\F}{\mathbb{F}}
\newcommand{\X}{\mathbb{X}}
\newcommand{\Mb}{\mathbb{M}}
\newcommand{\As}{\mathcal{A}}
\newcommand{\Os}{\mathcal{O}}
\newcommand{\Fs}{\mathcal{F}}
\newcommand{\Gs}{\mathcal{G}}
\newcommand{\Hs}{\mathcal{H}}
\newcommand{\Ls}{\mathcal{L}}
\newcommand{\womega}{\widetilde{\Omega}}
\DeclareMathOperator{\PPic}{\mathbf{Pic}}
\DeclareMathOperator{\BBun}{\mathbf{Bun}}
\DeclareMathOperator{\Fil}{Fil}
\DeclareMathOperator{\FFil}{\mathbf{Fil}}
\newcommand{\sub}{\subseteq}
\newcommand{\fa }{\forall}
\newcommand{\restr}[1]{|_{#1}}
 \title{A Hodge--Tate decomposition with rigid analytic coefficients}
 \author{Lucas Gerth}
 \date{} 
\newcommand{\RNum}[1]{\uppercase\expandafter{\romannumeral #1\relax}}
\begin{document}

\maketitle
\setlength{\parskip}{0pt}
\setlength{\parindent}{15pt}
\setlist{topsep=5pt, leftmargin=*}

\begin{abstract}
Let $X$ be a smooth proper rigid analytic space over a complete algebraically closed field extension $K$ of $\mathbb{Q}_p$. We establish a Hodge--Tate decomposition for $X$ with $G$-coefficients, where $G$ is any commutative locally $p$-divisible rigid group. This generalizes the Hodge--Tate decomposition of Faltings and Scholze, which is the case $G=\mathbb{G}_a$. For this, we introduce geometric analogs of the Hodge--Tate spectral sequence with general locally $p$-divisible coefficients. We prove that these spectral sequences degenerate at $E_2$. Our results apply more generally to a class of smooth families of commutative adic groups over $X$ and in the relative setting of smooth proper morphisms $X\rightarrow S$ of seminormal rigid spaces. We deduce applications to analytic Brauer groups and the geometric $p$-adic Simpson correspondence.
\end{abstract}


\section{Introduction}
\subsection{The Hodge--Tate decomposition}
Let $X$ be a proper smooth rigid space over a complete algebraically closed field extension $K$ of $\Q_p$. The Hodge--Tate decomposition, first conjectured by Tate in his celebrated article \cite[§4.1]{Tat67}, asserts the existence of an isomorphism
\begin{align}\label{classical Hodge--Tate decomposition: intro}
    H_{\et}^n(X,\Q_p) \otimes_{\Q_p} K = \bigoplus_{i+j=n} H^i(X,\Omega_{X}^j)(-j),
\end{align}
that is Galois-equivariant when $X$ admits a model over a discretely valued subfield of $K$. The algebraic case is due to Faltings \cite[III Thm. 4.1]{Fal88} and was reproven via different methods by Tsuji \cite{Tsuji1999} and Nizio\l \,\,\cite{Niziol2008}. The existence of the decomposition in full generality was first proven in \cite[Thm. 13.3]{Bhatt2018}, building on \cite[§3.3]{Scholze2013}. In this article, we prove the following generalization of the Hodge--Tate decomposition.
\begin{thm}\label{Theorem 0}{(Theorem \ref{Theorem: The classical sseq splits})}
    Let $X$ be a proper smooth rigid space over $K$ and let $G$ be a locally $p$-divisible rigid group. Then a choice of a $\BdR+/\xi^2$-lift $\X$ of $X$ and the datum of an exponential $\Exp$ for $K$ induce a Hodge--Tate decomposition with $G$-coefficients, natural in $\X$ and $G$
    \begin{align}\label{Hodge--Tate decomposition for locally p-divisible groups} H_{v}^n(X,G) = H_{\et}^n(X,G) \oplus \bigoplus_{i+j=n, i<n} H^i(X,\Omega_{X}^{j})(-j)\otimes_{K} \Lie(G).\end{align}
\end{thm}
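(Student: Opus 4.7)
The strategy is to reduce to the $E_2$-degeneration of the Hodge--Tate spectral sequence with $G$-coefficients (established earlier in the paper) and then produce a functorial splitting of the resulting abutment filtration on $H^n_v(X,G)$ out of the two data $\X$ and $\Exp$.

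First, I would recall the $G$-coefficient Hodge--Tate spectral sequence abutting to $H^{i+j}_v(X,G)$: the $(i,j)$-entry on its $E_2$-page is $H^i(X,\Omega^j_X)(-j)\otimes_K \Lie(G)$ for $j\geq 1$ and $H^i_{\et}(X,G)$ for $j=0$. This shape reflects the short exact sequence on the $v$-site
\[
0 \to G[p^\infty] \to G \xrightarrow{\log} \Lie(G)\otimes_K \G_a \to 0
\]
attached to a locally $p$-divisible group: the étale torsion subsheaf is responsible for the $j=0$ row, while the linear quotient produces the Hodge pieces tensored with $\Lie(G)$. Given the $E_2$-degeneration, it suffices to split the induced abutment filtration on $H^n_v(X,G)$, naturally in $G$ and $\X$.

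Second, I would use the $\BdR+/\xi^2$-lift $\X$ of $X$ to split the linear part of the filtration. Via the Bhatt--Morrow--Scholze splitting in the case $G=\G_a$, applied fiberwise over $\Lie(G)$, the lift provides a natural $K$-linear identification
\[
H^n_v(X, \Lie(G)\otimes_K \G_a) \cong \bigoplus_{i+j=n} H^i(X,\Omega^j_X)(-j)\otimes_K \Lie(G),
\]
functorial in $\X$ and in $\Lie(G)$.

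Finally, the exponential $\Exp$ for $K$ integrates this linear splitting from $\Lie(G)$ to $G$ itself. Local $p$-divisibility makes $\log$ an isomorphism on formal neighborhoods of the identity in $G$; the datum of $\Exp$ then promotes the cohomological shadow of $\log$ to an honest split surjection, splitting off the étale summand $H^n_{\et}(X,G)$ coming from $G[p^\infty]$ and identifying its complement with the Hodge--Tate pieces for $j \geq 1$ via Step 2. Naturality in $\X$ and $G$ then follows from the naturality of each of the three ingredients. The main obstacle, and the essential new content of this theorem beyond degeneration, is this last step: checking that a single global exponential on $K$ suffices to produce a splitting of the $G$-coefficient filtration that is simultaneously compatible across all cohomological degrees and functorial in the locally $p$-divisible group $G$, despite $G$ being only \emph{locally} $p$-divisible.
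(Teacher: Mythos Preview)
Your overall strategy matches the paper's: degenerate the $G$-coefficient Hodge--Tate spectral sequence, split the $\ga$-coefficient filtration via the lift $\X$, and use $\Exp$ to transport this to $G$. But two concrete points need fixing.

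First, the logarithm sequence you wrote is not correct: it reads $0\to G[p^\infty]\to \widehat{G}\xrightarrow{\log}\ga\to 0$, with $\widehat{G}$ rather than $G$ in the middle. Consequently the $j=0$ row $H^i_{\et}(X,G)$ does \emph{not} ``come from $G[p^\infty]$''; it comes from $\nu_*G=G$. The passage from $\widehat{G}$ to $G$ is a separate step: one uses that $\cj{G}=G/\widehat{G}$ has the approximation property, so $R^j\nu_*\widehat{G}\to R^j\nu_*G$ is an isomorphism for $j\geq 1$, which is why the splitting built for $\widehat{G}$ can be pushed forward to $G$ at the very end.

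Second, and more seriously, your last step is where the actual content lies and your description does not supply it. You say that $\Exp$ ``promotes the cohomological shadow of $\log$ to an honest split surjection'', but an exponential for $K$ is a map $K\to 1+\ma_K$; it does not directly act on the abelian group $H^n_\tau(X,\widehat{G})$. The paper's mechanism is the following: one proves that the diamantine higher direct image $\BBun^n_{\widehat{G},\tau}=R^n\pi_{\tau,*}\widehat{G}$ is \emph{representable by a locally $p$-divisible rigid group}, and that the pushforward of the logarithm sequence is precisely the logarithm sequence of this new group (this uses Lemma~\ref{extension by étale rigid group} applied to the geometric sequence $0\to \BBun^n_{G[p^\infty]}\to \BBun^n_{\widehat{G},\tau}\to \BBun^n_{\ga,\tau}\to 0$). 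Once this representability is in hand, the general exponential machine for locally $p$-divisible groups (Proposition~\ref{exponential on K points of locally p-divisible group}) produces, from the single datum $\Exp$, a section $s_{\exp}$ of $\log\colon H^n_\tau(X,\widehat{G})\to H^n_\tau(X,\ga)$ on $K$-points, functorially in $G$. The paper then runs a descending induction on the filtration index, using that each square relating $\Fil^i H^n_v(X,\widehat{G})$ to $\Fil^i H^n_v(X,\ga)$ is a pushout, to propagate $s_{\exp}$ through the filtration; composing with the $\X$-splitting $s_{\X}$ on the $\ga$-side and with $\widehat{G}\to G$ gives the decomposition. Without the representability of $\BBun^n_{\widehat{G},\tau}$ (or some substitute), there is no evident way to turn $\Exp$ into a cohomological splitting, so this is the missing idea in your outline.
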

We recover (\ref{classical Hodge--Tate decomposition: intro}) as the case $G=\G_a$ using the Primitive Comparison Theorem \cite[Thm. 5.1]{scholze2013padicHodge}. By an exponential, we mean a continuous homomorphism $\Exp \colon K \rightarrow 1+\ma_K$ splitting the logarithm map $\log\colon 1+\ma_K \rightarrow K$ and extending the exponential power series. The locally $p$-divisible rigid groups form a class of commutative rigid groups, recently introduced in \cite[§6]{heuer2023padic}, of which examples are $G=\G_a,\G_m$ and more generally any connected commutative algebraic group over $K$. This also includes non-algebraic groups, such as abeloid varieties and analytic $p$-divisible groups in the sense of Fargues \cite[§2]{Farg19}. As we explain below, we also prove a more general relative version of this result (Theorem \ref{theorem 3}) and give applications to rigid analytic Brauer groups (Example \ref{Example: brauer groups shouldn't be torsion}) and non-abelian $p$-adic Hodge theory (§\ref{subsection: application to non-abelian $p$-adic Hodge theory}). We also deduce a formula for the cohomology groups $H_{v}^n(X,\B^{\varphi=p})$, where $\B$ is the $v$-sheaf analog of the ring of analytic functions on the “punctured Fargues--Fontaine disc” $\Spa(A_{\inf},A_{\inf})\backslash V(p[p^{\flat}])$ (Theorem \ref{thm: HT decomposition for BC space}).

For the proof of Theorem \ref{Theorem 0}, generalizing an idea of Faltings and Scholze, we construct a spectral sequence, natural in $X$ and $G$
\begin{align}\label{classical Hodge--Tate spectral sequence for general G: intro}
    E_2^{ij} = \left\{\begin{aligned}
  &H_{\et}^i(X, \Omega_X^j)(-j)\otimes_{K} \Lie(G)  \quad &\text{if }j>0\\
  &H_{\et}^i(X, G)  \quad &\text{if }j=0
\end{aligned}\right\}
 \Longrightarrow H_v^{i+j}(X,G).
\end{align}
We call it the \textbf{Hodge--Tate spectral sequence with $G$-coefficients}. We then show the following.
\begin{thm}\label{Theorem 1}{(Corollary \ref{Cor: The classical sseq degenerates})}
    Let $X$ be a proper smooth rigid space over $K$ and let $G$ be a locally $p$-divisible rigid group. Then the Hodge--Tate spectral sequence with $G$-coefficients (\ref{classical Hodge--Tate spectral sequence for general G: intro}) degenerates at the $E_2$-page.
\end{thm}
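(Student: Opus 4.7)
The plan is to reduce degeneration for general locally $p$-divisible $G$ to the classical case $G=\G_a$ by exploiting functoriality of the Leray spectral sequence in the coefficient sheaf, together with the logarithm $\log\colon G\to \Lie(G)\otimes_K\G_a$.

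First, I would identify $R\pi_{v*}G$, where $\pi_v\colon X_v\to X_{\et}$. On the étale site $R^0\pi_{v*}G$ is just the étale sheaf $G$, while for $j\geq 1$ one expects a canonical isomorphism $R^j\pi_{v*}G\cong \Omega_X^j(-j)\otimes_K\Lie(G)$. Both assertions are local on $X$ and, pointwise, local on $G$ near the identity: since $G$ is locally $p$-divisible, the logarithm is a local isomorphism there onto $\Lie(G)\otimes\G_a$, reducing the computation of the higher pushforwards to Scholze's computation of the Hodge--Tate sheaves for $\G_a$. This identifies the $E_2$-page of the statement.

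Second, the logarithm is natural in $G$, hence induces a morphism between the Leray spectral sequences for $G$ and for $\Lie(G)\otimes\G_a$. By the identification above, this morphism is the identity on each $E_2^{i,j}$ with $j\geq 1$ and equals $\log_*$ on $E_2^{i,0}$. For $r\geq 2$ the differentials $d_r\colon E_r^{i,j}\to E_r^{i+r,j-r+1}$ split into three kinds: (a) both endpoints in rows $\geq 1$, which by naturality agree with the corresponding differentials in the classical Hodge--Tate spectral sequence for $\G_a$ (tensored with $\Lie(G)$) and hence vanish by the theorem of Scholze and Bhatt--Morrow--Scholze; (b) those outgoing from the zero row, $d_r^{i,0}$, whose target lies in row $1-r<0$ and is automatically zero; (c) those incoming to the zero row, $d_r\colon E_r^{i,r-1}\to E_r^{i+r,0}$.

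The main obstacle is the vanishing of the type (c) differentials: naturality under $\log$ only shows that their image lies in $\ker(\log_*)\subseteq H^{i+r}_{\et}(X,G)$, which is generally nonzero. To conclude, I would analyze the kernel $N:=\ker(\log)$ as a $v$-sheaf. Since $\Lie(N)=0$, one expects $R^j\pi_{v*}N=0$ for $j\geq 1$, so the Leray spectral sequence of $N$ is concentrated in row zero and degenerates trivially. A short exact sequence of the form $0\to N\to G\to \Lie(G)\otimes\G_a\to 0$ (possibly only after restriction to an identity neighborhood of $G$, combined with $v$-descent) then yields a short exact sequence of Leray spectral sequences whose outer terms degenerate at $E_2$, and a diagram chase propagates degeneration to the middle. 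The delicate point is to justify the shape of the logarithm sequence and the pro-étale nature of $N$ at the $v$-site in the precise framework of locally $p$-divisible rigid groups of Heuer, which also underpins the computation in the first step.
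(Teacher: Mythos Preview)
Your architecture is exactly the paper's: identify the $E_2$-page via the logarithm, then use functoriality in the coefficient to import the vanishing of differentials between rows $j\geq 1$ from the known case of $\G_a$, leaving only the differentials $d_r^{i,r-1}$ landing in row zero. Two preliminary remarks: the logarithm is only defined on the open subgroup $\widehat G\subseteq G$, so one must first reduce from $G$ to $\widehat G$ (the paper does this by showing that $\cj G=G/\widehat G$ has the approximation property, hence $R^j\nu_*\cj G=0$ for $j\geq 1$); and the kernel $N=\ker(\log)$ equals $G[p^\infty]$, an \'etale torsion sheaf, so indeed $R^j\nu_*N=0$ for $j\geq 1$.

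The genuine gap is your resolution of the row-zero differentials. A short exact sequence $0\to N\to \widehat G\to \ga\to 0$ of $v$-sheaves does \emph{not} yield a short exact sequence of Leray spectral sequences; functoriality only gives maps $E_r(N)\to E_r(\widehat G)\to E_r(\ga)$. Even granting short-exactness at the $E_2$-level, a diagram chase does not force $d_r^{i,r-1}(\widehat G)=0$: naturality tells you only that its image lies in the image of $E_r^{i+r,0}(N)\to E_r^{i+r,0}(\widehat G)$, which is nonzero in general. What is missing is a reason why a homomorphism from $E_r^{i,r-1}(\widehat G)\cong H^i_{\et}(X,\widetilde\Omega_X^{r-1}\otimes_K\Lie G)$ into that image must vanish. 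The paper supplies it by \emph{geometrizing}: it upgrades the spectral sequence to one of sheaves on $\Perf_{K,\et}$, where the source becomes a vector group (fiberwise connected) and the image of $E_r^{i+r,0}(N)$ comes from the pullback of an \'etale sheaf on the base (fiberwise discrete), so the map is zero. At the group-theoretic level the same step can be phrased as: the source is a $K$-vector space while the image lies in the image of $H^{i+r}_{\et}(X,G[p^\infty])$, which is $p$-power torsion, and any homomorphism from a $\Q$-vector space to a torsion group vanishes. Either formulation completes your argument; the bare diagram chase you propose does not.
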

 For $G=\G_a$, the sequence (\ref{classical Hodge--Tate spectral sequence for general G: intro}) is the Hodge--Tate spectral sequence of Scholze \cite[Thm. 3.20]{Scholze2013}. For $G=\G_m$, this sequence is already defined by Ben Heuer in \cite[§2.4]{heuer2021lineonpefd}, where it is called the “multiplicative Hodge--Tate spectral sequence”. This last sequence is also the subject of recent work of Ertl--Gilles--Nizio\l \,
\cite{ertl2024vpicardgroupsteinspaces} where it is studied for $X$ a smooth Stein space.

\subsection{The geometric Hodge--Tate spectral sequence}
The key idea for proving Theorem \ref{Theorem 1} is to upgrade the Hodge--Tate spectral sequence (\ref{classical Hodge--Tate spectral sequence for general G: intro}) to a geometric analog. For this, we generalize the approach of \cite
{heuer2023diamantine}. We allow $K$ to be an arbitrary perfectoid field extension of $\Q_p$ and we let $\pi \colon X \rightarrow \Spa(K)$ denote the structure map. We consider the category $\Perf_K$ of affinoid perfectoid spaces over $K$. We then define for a locally $p$-divisible rigid group $G$ the \textbf{diamantine higher direct images}
\[ \BBun_{G,\tau}^n \coloneqq R^n\pi_{\tau,*}G \colon \Perf_K \rightarrow \Ab, \]
where $\tau$ is either the étale or the $v$-topology. Explicitly, this is the $\tau$-sheafification of the presheaf
\[ Y \in \Perf_K \mapsto H_{\tau}^n(X\times_K Y,G).\]
When $G=\G_m$ and $n=1$, this recovers the diamantine Picard functors $\PPic_{X,\tau}$ of \cite{heuer2023diamantine}.

The étale Picard functor is often representable by a rigid group and is expected to be so in general. We refer the reader to the introduction of \cite{heuer2023diamantine} for an exposition of some known cases. This holds in particular when $\pi\colon X \rightarrow \Spa(K)$ is the analytification of a proper smooth algebraic variety $\pi^{\alg}\colon X^{\alg} \rightarrow\Spec(K)$, in which case $\PPic_{X,\et}$ is the analytification of the Picard variety of $X^{\alg}$. For the higher pushforwards, the situation is different: Let $\pi_{\Et}^{\alg}\colon \Sch_{X^{\alg},\et} \rightarrow \Sch_{K,\et}$ denote the map of big étale sites, then the sheaves
\[ R^n\pi_{\Et,*}^{\alg}\G_m\colon \Sch_K \rightarrow \Ab\]
are generally not representable by smooth group schemes for $n\geq 2$ (see Remark \ref{remark: algebraic higher pushforward are not representable}). Moreover, the analytification of this sheaf and the analytic higher direct image are generally not isomorphic. This is exemplified in rank $n=2$ where the analytic Brauer group $H_{\et}^2(X,\G_m)$ typically contains non-torsion elements, in contrast with the algebraic Brauer group $H_{\et}^2(X^{\alg},\G_m)$ which is torsion \cite[Cor. IV.2.6]{milne1980etale}. This mirrors phenomena appearing in complex geometry: For $X$ a projective smooth variety over $\C$, $H_{\an}^2(X(\C),\G_m)$ is typically non-torsion, as can be seen using the exponential sequence.

Nevertheless, we obtain the following: Given a locally $p$-divisible rigid group $G$, we let $\widehat{G} \sub G$ denote the open subgroup of $p$-topological torsion of $G$, see Definition \ref{definition of topological p-torsion subgroup}.
\begin{proposition}{(Proposition \ref{representability of diamantine higher direct image for topologically torsion groups})}
    Let $X$ be a proper smooth rigid space and let $G$ be a locally $p$-divisible rigid group. Then for all $n\geq 0$ and $\tau \in \{\et,v\}$, the sheaf $\BBun_{\widehat{G},\tau}^n$ is representable by a locally $p$-divisible rigid group with Lie algebra $H_{\tau}^n(X, \Os_X\otimes_K \Lie(G))$.
\end{proposition}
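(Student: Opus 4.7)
The strategy is to use the $p$-adic logarithm on $\widehat{G}$ to reduce the claim to the case $G=\G_a$, and then to compute the higher pushforwards of $\widehat{\G_a}$ via a perfectoid base-change analysis.

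For a locally $p$-divisible rigid group $G$, the $p$-adic logarithm fits into a short exact sequence of $v$-sheaves on $\Perf_K$
\[ 0 \to T \to \widehat{G} \xrightarrow{\log} \widehat{\G_a}\otimes_K \Lie(G) \to 0, \]
where $T$ is a pro-étale $\Z_p$-local system on $\Spa(K)$ attached to the $p$-adic Tate module of $G$. Since $T$ is pro-étale and $X$ is proper, $R^n\pi_{\tau,*}T$ is representable by the pro-étale locally $p$-divisible rigid group associated to the finitely generated $\Z_p$-module $H_\tau^n(X,T)$, with vanishing Lie algebra. The long exact sequence attached to $R\pi_{\tau,*}$ thus reduces the proposition to showing that $R^n\pi_{\tau,*}\widehat{\G_a}$ is representable by a locally $p$-divisible rigid group with Lie algebra $H_\tau^n(X,\Os_X)$.

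For this additive case, my plan is to show that the presheaf $Y\mapsto H_\tau^n(X\times_K Y,\widehat{\G_a})$ sheafifies to $\widehat{\G_a}\otimes_K H_\tau^n(X,\Os_X)$ on $\Perf_K$. Here $H_\tau^n(X,\Os_X)$ is a finite-dimensional $K$-vector space by the properness and smoothness of $X$. This identification would follow by computing the cohomology via a perfectoid cover of $X\times_K Y$ together with the fact that, on perfectoid test objects, $\widehat{\G_a}$ is the $p$-topological torsion subgroup of $\G_a$, so that one can pass between $\Os_X$-cohomology and $\widehat{\G_a}$-cohomology in a controlled way. The resulting $\tau$-sheaf is represented by the rigid group $\widehat{\G_a}\otimes_K H_\tau^n(X,\Os_X)$, whose Lie algebra is $H_\tau^n(X,\Os_X)$ by inspection.

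Assembling these, $\BBun_{\widehat{G},\tau}^n$ is realized as an extension of the pro-étale rigid group $H_\tau^n(X,T)$ by $\widehat{\G_a}\otimes_K H_\tau^n(X,\Os_X)\otimes_K\Lie(G)$, which is again locally $p$-divisible, with Lie algebra $H_\tau^n(X,\Os_X\otimes_K\Lie(G))$ as claimed. The main obstacle will be the base-change identification $R^n\pi_{\tau,*}\widehat{\G_a}\cong \widehat{\G_a}\otimes_K H_\tau^n(X,\Os_X)$: while the $n=1$ case underlies the representability of the diamantine Picard functor in \cite{heuer2023diamantine}, the higher-degree version requires a perfectoid proper base-change statement for $\widehat{\G_a}$-cohomology, likely accessed through the Hodge--Tate spectral sequence techniques developed elsewhere in the paper.
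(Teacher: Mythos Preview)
Your overall strategy---use the logarithm sequence for $\widehat{G}$, identify the higher direct images of the pieces, and conclude representability by an extension argument---is the same as the paper's. However, there are genuine errors and gaps.

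First, the kernel of $\log\colon \widehat{G}\to \ga$ is not the Tate module. It is the $p$-power torsion $G[p^{\infty}]=\varinjlim_m G[p^m]$, an étale torsion group (Lemma \ref{statements about p-topological torsion subgroup}.(2)), not a pro-étale $\Z_p$-local system. You may be thinking of the sequence $0\to V_pG\to \widetilde{G}\to \ga\to 0$ for the Banach--Colmez space $\widetilde{G}=\varprojlim_{[p]}G$, which is a different object. With the correct kernel, $R^n\pi_{\tau,*}G[p^{\infty}]$ is an étale torsion group (Proposition \ref{Prop: main properties of diamantine higher direct images}.(2)), not a $\Z_p$-module.

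Second, you implicitly assume that the long exact sequence for $R\pi_{\tau,*}$ applied to $0\to G[p^{\infty}]\to \widehat{G}\to \ga\to 0$ breaks into short exact sequences. This is not automatic: one must show that the connecting maps $R^{n}\pi_{\tau,*}\ga\to R^{n+1}\pi_{\tau,*}G[p^{\infty}]$ vanish. The paper proves this (Proposition \ref{Prop: main properties of diamantine higher direct images}.(3)) by a geometric argument: the source is a vector bundle, hence fibrewise connected, while the target comes from $S_{\et}$ and is fibrewise discrete.

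Third, once you have the short exact sequence $0\to \BBun_{G[p^{\infty}],\tau}^n\to \BBun_{\widehat{G},\tau}^n\to \BBun_{\ga,\tau}^n\to 0$, you still need to argue that an extension of a vector group by an étale separated $p^{\infty}$-torsion group is representable by a locally $p$-divisible rigid group. This is the content of Lemma \ref{extension by étale rigid group}, and it requires knowing that the middle term is a $v$-sheaf. For $\tau=v$ this is immediate, but for $\tau=\et$ the paper deduces it from the degeneration of the geometric Hodge--Tate spectral sequence (Theorem \ref{thm: geometric Hodge--Tate spectral sequence for general G degenerates}), which realises $\BBun_{\widehat{G},\et}^n$ as the bottom piece of a filtration of the $v$-sheaf $\BBun_{\widehat{G},v}^n$ with vector-bundle graded pieces. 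Your proposal does not address this point.

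Finally, your suggested identification $R^n\pi_{\tau,*}\widehat{\G_a}\cong \widehat{\G_a}\otimes_K H^n_{\tau}(X,\Os_X)$ is correct (since $\widehat{\G_a}=\G_a$), but the paper obtains it directly from the coherent base-change result \cite[Cor.~3.10.1]{heuer2024relative}, not via an ad hoc perfectoid-cover computation.
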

For $G\neq \widehat{G}$ and $n\geq 2$, the sheaves $\BBun_{G,\tau}^n$ need not be representable by smooth rigid groups. We provide a counter-example in Corollary \ref{cor: non-representability of analytic Brauer functor}.

As an application, we obtain a new perspective on the presence of non-torsion elements in Brauer groups of proper smooth rigid analytic varieties (cf. Example \ref{Example: brauer groups shouldn't be torsion}): The $p$-adic exponential yields an open subgroup $U \sub \BBun_{\widehat{\G}_m,\et}^2$ isomorphic to a rigid polydisc $U\cong \B^d$, where $d=h^2(X,\Os_X)$. Moreover, we show in Proposition \ref{proposition 5-term exact sequence and abutment} that the kernel of the map of sheaves
\[ \BBun_{\widehat{\G}_m,\et}^2 \rightarrow \BBun_{\G_m,\et}^2\]
is a discrete group, so that the image of $U$ is not torsion, as soon as $d>0$. This generalizes to any locally $p$-divisible group $G$ and any degree $n\geq 1$.

In this article, we construct an $E_2$-spectral sequence of étale sheaves on $\Perf_K$
\begin{equation}\label{geometric Hodge--Tate spectral sequence: intro}
 \mathbf{E}_2^{ij}(G) = \left\{\begin{aligned}
  &H_{\et}^i(X,\Omega_X^j)(-j)\otimes \Lie(G)\otimes_K \G_a \quad &\text{if }j>0\\
  &\BBun_{G,\et}^i \quad &\text{if }j=0
\end{aligned}\right\}  \Longrightarrow \BBun_{G,v}^{i+j},
\end{equation}
that we call the \textbf{geometric Hodge--Tate spectral sequence with} $G$-\textbf{coefficients}. We obtain the following.
\begin{thm}{(Theorem \ref{thm: geometric Hodge--Tate spectral sequence for general G degenerates})}\label{Theorem 2}
Let $X$ be a proper smooth rigid space over a perfectoid field extension $K$ of $\Q_p$ and let $G$ be a locally $p$-divisible rigid group. Then the geometric Hodge--Tate spectral sequence (\ref{geometric Hodge--Tate spectral sequence: intro}) degenerates at the $E_2$-page.
\end{thm}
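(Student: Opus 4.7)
The plan is to reduce degeneration to the case $G = \G_a$, for which the geometric Hodge--Tate spectral sequence is a direct sheaf-theoretic enhancement of Scholze's classical Hodge--Tate degeneration and is treated separately. The natural bridge between the general case and the vector group case is the open subgroup $\widehat G \sub G$ of topologically $p$-torsion elements introduced above.

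First, I would reduce to $G = \widehat G$. The quotient $G/\widehat G$ has trivial Lie algebra, so the spectral sequence (\ref{geometric Hodge--Tate spectral sequence: intro}) for $G/\widehat G$ is concentrated in the row $j = 0$ and trivially degenerates. Combined with the functoriality of (\ref{geometric Hodge--Tate spectral sequence: intro}) in $G$ and the long exact sequence of $v$-cohomology attached to $0 \to \widehat G \to G \to G/\widehat G \to 0$, this reduces the claim to $G = \widehat G$. Next, the logarithm $\log\colon \widehat G \to \Lie(G) \otimes_K \G_a$ is a morphism of locally $p$-divisible rigid groups that is the identity on Lie algebras; it induces a morphism of spectral sequences which is the identity on $\mathbf{E}_2^{i,j}$ for $j > 0$, since both sides depend only on $\Lie(G)$. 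For a differential $d_r$ whose source and target both lie in rows with $j > 0$, its vanishing for $\widehat G$ is therefore equivalent to vanishing for $\Lie(G) \otimes_K \G_a$, which reduces by additivity in $\Lie(G)$ to the $\G_a$ case.

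The remaining and most delicate case is a differential $d_r\colon \mathbf{E}_r^{i, r-1} \to \mathbf{E}_r^{i+r, 0}$ targeting a subquotient of $\BBun_{\widehat G, \et}^{i+r}$. By Proposition \ref{representability of diamantine higher direct image for topologically torsion groups}, this target is representable by a locally $p$-divisible rigid group with Lie algebra $H^{i+r}_\et(X, \Os_X \otimes_K \Lie(G))$; the key observation is that morphisms of étale sheaves from a connected vector group into such a target factor through its identity component and are determined by their induced map on Lie algebras, via the exponential of the target. This tangent map can be identified, via $\log$, with the analogous differential in the $(\Lie(G) \otimes_K \G_a)$-spectral sequence, which vanishes by the previous paragraph. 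The main obstacle will be making this \emph{detection by tangent maps} rigorous when both source and target appear only as subquotients on the later pages of the spectral sequence: one must verify inductively that the relevant subquotients continue to enjoy the vector-group / locally-$p$-divisible-rigid-group structure which allows morphisms between them to be detected at the Lie level, a structural point ultimately underwritten by the representability results established earlier.
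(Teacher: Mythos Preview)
Your overall strategy---comparing the spectral sequences for $G$, $\widehat G$, and $\Lie(G)\otimes_K\G_a$ via the logarithm, and treating separately the differentials landing in row $j=0$---is exactly the paper's approach. However, your handling of the bottom-row differentials contains a circularity: you invoke Proposition~\ref{representability of diamantine higher direct image for topologically torsion groups} to say that $\BBun_{\widehat G,\et}^{i+r}$ is representable by a locally $p$-divisible rigid group, but in the paper that proposition is \emph{proved using} Theorem~\ref{thm: geometric Hodge--Tate spectral sequence for general G degenerates} (the degeneration is used to show that $\BBun_{\widehat G,\et}^n$ is a $v$-sheaf, which is needed before Lemma~\ref{extension by étale rigid group} can be applied). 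So you cannot appeal to it here.

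The paper's fix is both simpler and avoids any representability statement. For the differential $\mathbf{d}^{i,1}(\widehat G)$ landing in $\BBun_{\widehat G,\et}^{i+2}$, one observes that its composite with $\log_*\colon \BBun_{\widehat G,\et}^{i+2}\to \BBun_{\ga,\et}^{i+2}$ is the corresponding differential for $\ga$, which vanishes. Hence $\mathbf{d}^{i,1}(\widehat G)$ factors through $\ker(\log_*)=\BBun_{G[p^\infty],\et}^{i+2}$, and by Proposition~\ref{Prop: main properties of diamantine higher direct images}.(2) this is the pullback of a sheaf on $S_{\et}$ (in particular, fibrewise discrete). Since the source is a vector group, such a map must vanish. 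No ``detection by tangent maps'' is needed, and no inductive tracking of subquotient structure on later pages is required either: once all $d_2$ vanish one has $E_3=E_2$, and the same argument repeats verbatim on each page. Finally, your reduction from $G$ to $\widehat G$ via a ``long exact sequence'' is imprecise; the paper instead uses directly that $R^j\mu_*\widehat G \to R^j\mu_*G$ is an isomorphism for $j\geq 1$, which immediately transports the vanishing of differentials with source in row $j\geq 1$ from $\widehat G$ to $G$.
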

In particular, setting $\BBun_{G,\tau} \coloneqq \BBun_{G,\tau}^1$, Theorem \ref{Theorem 2} yields a natural short exact sequence on $\Perf_{K,\et}$
\begin{equation}\label{formula: geometric Hodge--Tate sequence for locally p-divisible G}\begin{tikzcd}
	0 & {\BBun_{G,\et}} & {\BBun_{G,v}} & {H^0(X,\Omega_X^1)(-1)\otimes\Lie(G)\otimes_K \G_a} & {0.}
	\arrow[from=1-1, to=1-2]
	\arrow[from=1-2, to=1-3]
	\arrow[from=1-3, to=1-4]
	\arrow[from=1-4, to=1-5]
\end{tikzcd}\end{equation}
This reproves the existence of the exact sequence in \cite[Thm. 2.7]{heuer2023diamantine} for $G=\G_m$ and geometrizes the short exact sequence \cite[Thm. 7.3.(1)]{heuer2023padic}. In particular, this yields evidence for a new instance of the geometric $p$-adic Simpson correspondence, as we explain further below.

For algebraically closed $K$, taking the $K$-rational points of the geometric spectral sequence (\ref{geometric Hodge--Tate spectral sequence: intro}) gives back the group-theoretic spectral sequence (\ref{classical Hodge--Tate spectral sequence for general G: intro}). Hence Theorem \ref{Theorem 2} implies Theorem \ref{Theorem 1}. This was one motivation for introducing the moduli-theoretic spectral sequences (\ref{geometric Hodge--Tate spectral sequence: intro}), as it is the geometry that makes the proof of the degeneration available. 

Moreover, we also exploit the geometric properties of the sheaves $\BBun_{G,\tau}^n$ in order to split the sequence (\ref{classical Hodge--Tate spectral sequence for general G: intro}). Indeed, we recall the following result. 
\begin{lemma}{(\cite[Thm. 6.12]{heuer2023padic})}\label{lemma intro: exponential}
    Let $H$ be a locally $p$-divisible rigid group over $K$ and assume that $K$ is algebraically closed. Then a choice of exponential $\Exp$ for $K$ induces a natural exponential
    \[ \Exp_H\colon \Lie(H) \rightarrow H(K).\]
\end{lemma}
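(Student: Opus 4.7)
The plan is to construct $\Exp_H$ by first defining it on a neighborhood of the identity via the formal group law of $H$, and then extending globally using local $p$-divisibility together with the coherent choice of $p$-power roots supplied by $\Exp$.

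First, I would extract from the formal group law of the formal completion of $H$ at the identity the formal exponential power series, yielding a natural morphism of rigid groups
\[
\exp_H^{\circ}\colon U \longrightarrow H(K),
\]
defined on an open subgroup $U \sub \Lie(H)$ consisting of vectors of sufficiently small $p$-adic norm (for instance, valuation greater than $1/(p-1)$). This step is purely formal, does not use $\Exp$, and gives a local inverse to the $p$-adic logarithm $\log_H$.

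To extend $\exp_H^{\circ}$ to all of $\Lie(H)$, I would exploit the $K$-vector space structure on $\Lie(H)$: for any $v \in \Lie(H)$, pick $n$ large enough that $p^n v \in U$, which is possible since $|p^n v| \to 0$; set $y_n := \exp_H^{\circ}(p^n v) \in H(K)$, and demand that $[p^n]\Exp_H(v)=y_n$. By iterated local $p$-divisibility, $p^n$-divisors of $y_n$ exist in $H(K)$, but they are only well-defined modulo $H[p^n](K)$. To make this choice canonical, natural in $H$, and compatible as $n$ varies, I would pass to the universal $p$-cover $\widetilde{H} := \varprojlim_{[p]} H$ formed in $v$-sheaves: its $K$-points $\widetilde{H}(K)$ form a $\Q_p$-vector space equipped with a natural map to $\Lie(H)$ (coming from the compatible system of logarithms), and the datum of $\Exp$ — which handles the base case $H = \G_m$ tautologically — provides a $K$-linear section of this map. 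Composing with $\widetilde{H}(K) \to H(K)$ yields $\Exp_H$; functoriality of $\widetilde{\cdot}$ and $\Lie$ ensures naturality in $H$.

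The main obstacle is verifying the coherence of this construction: the resulting $\Exp_H$ must be a continuous group homomorphism, must agree with $\exp_H^{\circ}$ on $U$, and must depend naturally on $H$. The key input is the rigidity of continuous homomorphisms $\Lie(H) \to H(K)$ extending a fixed local one, together with the observation that the ambiguity in dividing by $p^n$ is controlled by the Tate module $T_p H$, so that all the data required to pin down $\Exp_H$ reduces by local $p$-divisibility and functoriality to the case $H = \G_m$, where by construction $\Exp_{\G_m} = \Exp$.
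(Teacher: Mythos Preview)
The paper does not give its own proof of this lemma: it is a direct citation of \cite[Thm.~6.12]{heuer2023padic}, and the closest the paper comes to an argument is Proposition~\ref{exponential on K points of locally p-divisible group}, which again invokes Heuer's theorem as a black box. So there is nothing to compare your argument to in this paper; what matters is whether your sketch actually proves the result.

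Your first step, constructing a local exponential $\exp_H^{\circ}$ on a small open subgroup $U\subset\Lie(H)$ via the formal group law, is correct and coincides with Lemma~\ref{lemma: exponential converges}. The passage to the universal $p$-cover is also a reasonable reformulation: working with $\widetilde{\widehat{H}}=\varprojlim_{[p]}\widehat{H}$ (note: $\widehat{H}$, not $H$, since $[p]$ need not be surjective on all of $H$) one obtains an exact sequence of $\Q_p$-vector spaces
\[
0 \longrightarrow V_pH \longrightarrow \widetilde{\widehat{H}}(K) \longrightarrow \Lie(H) \longrightarrow 0,
\]
and splitting this is equivalent to what you want. However, $\widetilde{\widehat{H}}(K)$ carries no $K$-linear structure, so your phrase ``$K$-linear section'' is not meaningful here; the section can only be $\Q_p$-linear.

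The genuine gap is the sentence ``all the data required to pin down $\Exp_H$ reduces by local $p$-divisibility and functoriality to the case $H=\G_m$.'' This is the entire content of the theorem, and you have not explained it. There is no morphism between a general locally $p$-divisible $H$ and $\G_m$ along which functoriality could transport the splitting, so naturality alone cannot effect this reduction. What is actually needed is a structural input specific to $p$-divisible groups over algebraically closed $K$: Heuer's argument uses that an exponential for $K$ amounts to a splitting of the fundamental exact sequence $0\to\Q_p(1)\to\B^{\varphi=p}\to K\to 0$, together with the fact that any analytic $p$-divisible group sits in a Banach--Colmez-type extension whose splitting is governed by precisely this data. Your sketch does not touch this mechanism, and the ``rigidity of continuous homomorphisms'' you invoke only gives uniqueness, not existence, of the extension.
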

Given a $\BdR+/\xi^2$-lift $\X$ of $X$ and an exponential $\Exp$ for $K$, we therefore obtain two splittings of the maps below
\[\begin{tikzcd}
	{\Fil^iH_{v}^n(X,\widehat{G})} & {\Fil^iH_{v}^n(X,\Os_X \otimes\Lie(G))} & {H^i(X,\Omega_X^{n-i})(i-n)\otimes \Lie(G),}
	\arrow["\log", shift left, from=1-1, to=1-2]
	\arrow["\Exp", shift left, dashed, from=1-2, to=1-1]
	\arrow[shift left, from=1-2, to=1-3]
	\arrow["{s_{\X}}", shift left, dashed, from=1-3, to=1-2]
\end{tikzcd}\]
whose composition is the required splitting. This yields the Hodge--Tate decomposition and concludes the proof of Theorem \ref{Theorem 0}.

We also extend our results to a class of smooth families of commutative adic groups $\Gs \rightarrow X$. These are the so-called admissible locally $p$-divisible $X$-groups, see Definition \ref{Def: admissible locally p-divisible groups} for the precise definition. In addition to constant families $G\times_K X$, where $G$ is a locally $p$-divisible rigid group, this also includes vector bundles, families of analytic $p$-divisible groups \cite[Def. 4.1]{Farg22} as well as $\Gs = B^{\times}$, for $B$ a finite locally free commutative $\Os_X$-algebra. This last example appears naturally in the context of abelianization in the $p$-adic Simpson correspondence \cite{heuer2023simpsoncorr} \cite{heuer2024curve}. An admissible locally $p$-divisible group $\Gs \rightarrow X$ defines a $v$-sheaf on $X$ and we show that analogs of all aforementioned results hold for such groups $\Gs$.

We further extend our results to the relative setting of proper smooth morphisms $\pi\colon X\rightarrow S$ of rigid spaces. Relative variant of the Hodge--Tate spectral sequence were considered e.g. in \cite[§2.2]{ScholzeCariani2017}, \cite{ABBES_2024}, \cite[§7.1]{gaisin2022relativearminfcohomology} and \cite{heuer2024relative}. In the latter, which is the closest to our technical setup, Heuer shows that, for reduced $S$, we have a spectral sequence of $v$-vector bundles \cite[Cor. 5.12, Thm. 5.7.3]{heuer2024relative}
\begin{align}\label{eq: relative HT sseq of Heuer}
    E_2^{ij} = R^i\pi_{\et,*}\Omega_{X/S}^j \otimes_{\Os_{S_{\et}}}\Os_{S_{v}}(-j) \Longrightarrow (R^{i+j}\pi_{v,*}\Q_p) \otimes_{\Q_p} \Os_{S_v}
\end{align}
that degenerates at $E_2$.

In our situation, we also have a relative variant of the geometric Hodge--Tate spectral sequences. Let $\pi\colon X\rightarrow S$ be a proper smooth morphism of \emph{seminormal} rigid spaces. We define analogously 
\[ \BBun_{\Gs,X/S,\tau}^n \coloneqq R^n\pi_{\tau,*}\Gs\colon \Perf_{S,\tau} \rightarrow \Ab,\]
where $\tau$ is either the étale or the $v$-topology. Our most general statement is the following.
\begin{thm}{(Theorem \ref{thm: geometric Hodge--Tate spectral sequence for general G degenerates}, Corollary \ref{possible simplification of the abutment})}\label{theorem 3}
Let $\pi\colon X \rightarrow S$ be a proper smooth morphism of seminormal rigid spaces over a perfectoid field extension $K$ of $\Q_p$, and let $\Gs$ be an admissible locally $p$-divisible $X$-group. Assume that $S=\Spa(K)$ or that $\Lie(\Gs)$ comes via pullback from a vector bundle on $S$. Then there is an $E_2$-spectral sequence of sheaves on $\Perf_{S,\et}$  \begin{equation}\label{relative geometric Hodge--Tate spectral sequence: intro}
 \mathbf{E}_2^{ij}(\Gs) = \left\{\begin{aligned}
  &R^i\pi_{\Et,*}(\Omega_X^j\otimes_{\Os_X} \Lie(\Gs))(-j) \quad &\text{if }j>0\\
  &\BBun_{\Gs,X/S,\et}^i \quad &\text{if }j=0
\end{aligned}\right\}  \Longrightarrow \BBun_{\Gs,X/S,v}^{i+j}.
\end{equation}
Moreover, this spectral sequence degenerates at the $E_2$-page. 
\end{thm}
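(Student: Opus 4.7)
The spectral sequence will be constructed as a Leray spectral sequence for an appropriate factorization of $R\pi_{v,*}$ through the étale topology. Writing $\nu_X\colon \Perf_{X,v}\to\Perf_{X,\et}$ for the change-of-topology morphism, the Leray spectral sequence for $R\pi_{\et,*}\circ R\nu_{X,*}$ applied to $\Gs$ has $E_2$-page $R^i\pi_{\et,*}R^j\nu_{X,*}\Gs$ and an abutment that identifies, under $\nu_{S,*}$, with $\BBun_{\Gs,X/S,v}^{i+j}$ viewed on $\Perf_{S,\et}$. The $j=0$ row then produces $(\BBun_{\Gs,X/S,\et}^i)^{v-\sh}$ up to identifying the $v$-sheafification of $\Gs$ on the étale site with $\nu_{X,*}\Gs$. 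For the $j>0$ rows, the key local input is the identification
\[ R^j\nu_{X,*}\Gs\cong\widetilde{\Omega}_X^j\otimes_{\Os_X}\Lie(\Gs). \]
I would prove this via the short exact sequence $0\to\widehat{\Gs}\to\Gs\to\Gs/\widehat{\Gs}\to 0$ from Definition \ref{definition of topological p-torsion subgroup}, in which $\widehat{\Gs}$ is the topological $p$-torsion subgroup and the quotient is étale, hence $R^j\nu_{X,*}$-acyclic for $j>0$, thereby reducing to $\widehat{\Gs}$. For the latter, the $p$-adic logarithm $\log\colon\widehat{\Gs}\to\Lie(\Gs)$ is an isomorphism on a cofinal system of open subgroups of the identity, and combining this with Scholze's basic computation $R^j\nu_{X,*}\Os_X=\widetilde{\Omega}_X^j$ together with the projection formula---applicable thanks to the hypothesis that $\Lie(\Gs)$ is pulled back from a vector bundle on $S$---yields the claim, and hence the $j>0$ rows of the $E_2$-page after applying $R^i\pi_{\et,*}$.

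\textbf{Degeneration.} For the degeneration, the strategy is to reduce the vanishing of each differential $d_r$ to the absolute case. By functoriality of the construction with respect to base change in $S$, the restriction of $d_r$ to a geometric point $s\in S$ coincides with the corresponding differential in the absolute geometric Hodge--Tate spectral sequence for $X_s\to\Spa(K(s))$ with coefficients in $\Gs_s$. That spectral sequence degenerates at $E_2$ by Theorem \ref{Theorem 2}, so every $d_r$ vanishes pointwise on $S$. Since $S$ is smooth and hence reduced, pointwise vanishing of differentials between sufficiently well-behaved sheaves should imply their global vanishing as morphisms on $\Perf_{S,\et}$.

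\textbf{Main obstacle.} The delicate step is the passage from fiberwise to global vanishing. The Hodge-to-Hodge differentials $d_r\colon E_r^{i,j}\to E_r^{i+r,j-r+1}$ with $j\geq r$ are unproblematic: their sources and targets are coherent $\Os_S$-modules (via the projection formula and the $\Lie(\Gs)$-pullback hypothesis), and in fact they match with differentials in a $\Lie(\Gs)$-twisted version of Heuer's relative Hodge--Tate spectral sequence (\ref{eq: relative HT sseq of Heuer}), whose degeneration is already established. The subtle case is that of the differentials $d_r\colon E_r^{i,r-1}\to E_r^{i+r,0}$ whose target lies in the $v$-sheafified column $(\BBun_{\Gs,X/S,\et}^{i+r})^{v-\sh}$, which is not a priori representable by a rigid analytic object. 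To handle these I would combine Proposition \ref{representability of diamantine higher direct image for topologically torsion groups}, which identifies $\BBun_{\widehat{\Gs},X/S,\et}^{i+r}$ with a locally $p$-divisible rigid group of prescribed Lie algebra, with the observation that any morphism from a coherent $\Os_S$-module must factor through this topological $p$-torsion subsheaf, where fiberwise vanishing of morphisms of smooth rigid groups implies global vanishing by standard rigid analytic arguments.
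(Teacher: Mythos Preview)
Your construction of the spectral sequence and identification of the $E_2$-page is essentially the paper's, and your handling of the differentials landing in rows $j\geq 1$ via comparison with the $\ga$-spectral sequence (i.e.\ Heuer's relative Hodge--Tate result) is exactly right.

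The genuine gap is a circularity in your treatment of the differentials $d_r\colon E_r^{i,r-1}\to E_r^{i+r,0}$. You invoke Proposition~\ref{representability of diamantine higher direct image for topologically torsion groups} to represent $\BBun_{\widehat{\Gs},\et}^{i+r}$ by a locally $p$-divisible rigid group, but in the paper that proposition is proved \emph{using} the degeneration of the geometric Hodge--Tate spectral sequence: one needs degeneration to see that $\BBun_{\widehat{\Gs},\et}^n=\FFil^n R^n(\mu_*\pi_{v,*})\widehat{\Gs}$ is a $v$-sheaf before applying Lemma~\ref{extension by étale rigid group}. So your argument as written is circular.

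The paper avoids this by a more elementary factorization. By functoriality in $\Gs$, the differential $\mathbf{d}_r^{i,r-1}(\Gs)$ is controlled by $\mathbf{d}_r^{i,r-1}(\widehat{\Gs})$, and the latter lands in the kernel of $\log_*\colon R^{i+r}\pi_{\Et,*}\widehat{\Gs}\to R^{i+r}\pi_{\Et,*}\ga$ (since the corresponding differential for $\ga$ vanishes). By the short exact sequence~(\ref{geometric cohomological log exact sequence}) this kernel is $R^{i+r}\pi_{\Et,*}\Gs[p^{\infty}]$, which by Proposition~\ref{Prop: main properties of diamantine higher direct images}.(2)---a statement proved \emph{before} degeneration, via proper base change for Zariski-constructible sheaves---is the pullback of a sheaf on $S_{\et}$. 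Now the source is a vector bundle, hence fiberwise connected, and a morphism from a fiberwise connected sheaf to one pulled back from $S_{\et}$ vanishes. This is the same ``connected source, discrete target'' principle you had in mind, but applied to $\Gs[p^{\infty}]$ rather than to $\widehat{\Gs}$, which makes all the difference logically.
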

The degeneration of the relative Hodge--Tate spectral sequence (\ref{eq: relative HT sseq of Heuer}) is an essential ingredient for our proof of Theorem \ref{theorem 3}. As a corollary, we obtain the following.
\begin{cor}{(Remark \ref{Remark: BunGet is a v-sheaf})}
    In the situation of Theorem \ref{theorem 3}, the sheaves $\BBun_{\Gs,X/S,\et}^n$ are small $v$-sheaves on $\Perf_S$.
\end{cor}

\subsection{Application to non-abelian $p$-adic Hodge theory}\label{subsection: application to non-abelian $p$-adic Hodge theory}
At last, we connect our results to the $p$-adic Simpson correspondence. Let $X$ a proper smooth connected rigid space over an algebraically closed field $K/\Q_p$ and fix $x\in X(K)$. The field of non-abelian $p$-adic Hodge theory arose from the wish to find a $p$-adic analog of the Corlette--Simpson correspondence in complex geometry. Initiated by Deninger--Werner \cite{DeningerWerner2005} and Faltings \cite{Faltings2005} independently, it was first envisaged (in their respective settings) as a correspondence between continuous $K$-linear (generalized) representations of the étale fundamental group $\pi_1(X,x)$ and Higgs bundles $(E,\theta)$ on $X$ satisfying certain properties. Reformulated in terms of Scholze's perfectoid foundations for $p$-adic Hodge theory, it is now understood as an equivalence of categories \cite[Thm. 1.1]{heuer2023simpsoncorr}
\begin{equation}\label{classical p-adic Simpson}\begin{aligned}
    \{v\text{-vector bundles } V \text{ on }X\,\} \xlongrightarrow{\sim} \{\, \text{Higgs bundles } (E,\theta) \text{ on }X \,\}.
\end{aligned}  \end{equation}
depending on choice of $\BdR+/\xi^2$-lift $\X$ of $X$ and an exponential $\Exp$ for $K$. A first instance of this modern formulation is the isomorphism \cite[Thm. 5.7]{heuer2021line}
\begin{align}\label{eq: tt mult HT of degree 1}
    \Hom_{\cts}(\pi_1(X,x),K^{\times}) = \Pic_{\et,\profet}(X) \oplus H^0(X,\Omega_X^1)(-1),
\end{align}
where $\Pic_{\et,\profet}(X)$ consists of those étale line bundles that are trivialized by a pro-finite-étale cover $\widetilde{X} \rightarrow X$. From \emph{loc. cit}, one can also deduce a decomposition
\begin{align}\label{eq: mult HT of degree 1}
    H_v^1(X,\G_m) = \Pic_{\et}(X) \oplus H^0(X,\Omega_X^1)(-1),
\end{align}
underlying a $p$-adic Simpson correspondence for line bundles. These two decompositions were later generalized \cite[Thm. 7.3-4]{heuer2023padic} to locally $p$-divisible rigid group $G$ in place of $\G_m$, and it is shown there that it induces a $p$-adic Simpson correspondence for $G$-torsors. The Hodge--Tate decomposition of Theorem \ref{Theorem 0} recovers (\ref{eq: mult HT of degree 1}) and its extensions to general $G$ as the special case $n=1$. Moreover, our decomposition with $n> 1$
\[ H_v^n(X,\G_m) = H_{\et}^n(X,\G_m) \oplus \bigoplus_{i=0}^{n-1}H^i(X,\Omega_X^{n-i})(i-n)\]
is new already for $\G_m$ and the author is not aware of an analogous result in complex geometry. We suspect that it is an instance of a higher analogue of the $p$-adic Simpson correspondence, involving étale and $v$-topological $n$-gerbes. We also obtain a version of the decomposition involving the étale fundamental group $\pi_1(X,x)$ in the case where $X$ is either a curve of genus $g\geq 1$ or an abeloid variety, cf. Theorem \ref{thm: case of abeloids and curves}.

Moreover, the perspective of moduli spaces in non-abelian $p$-adic Hodge theory was explored in \cite{heuer2022geometric} and \cite{heuer2024curve}, where Heuer--Xu show that the $p$-adic Simpson correspondence can be upgraded from an equivalence of categories to a comparison of small $v$-stacks. Our results yield a new instance of this geometric $p$-adic Simpson correspondence. Namely, the sequence (\ref{formula: geometric Hodge--Tate sequence for locally p-divisible G}) can be interpreted geometrically as saying that the coarse moduli space $\BBun_{G,v}$ of $v$-$G$-torsors on $X$ is a $\BBun_{G,\et}$-torsor over the “Hitchin base” $H^0(X,\widetilde{\Omega}_X^1)(-1) \otimes \Lie(G)\otimes_K \G_a$. We claim that, under mild assumptions on $G$, one can derive from (\ref{formula: geometric Hodge--Tate sequence for locally p-divisible G}) a geometric $p$-adic Simpson correspondence for $G$-torsors, following the strategy of \cite{heuer2022geometric}.

Finally, it was discovered in \cite{heuer2023simpsoncorr} that the more difficult higher rank correspondence could be tackled by trading the non-abelian $p$-adic Hodge theory of the group $\GL_n$ for the relative $p$-adic Hodge theory of smooth families of abelian groups $\Gs \rightarrow X$, through the process of \emph{abelianization}. Our approach yields a systematic study of smooth commutative relative groups and their Hodge--Tate theory in a more general setting. Given a finite locally free commutative $\Os_X$-algebra $B$ on a proper smooth rigid space $X$ over $K$, Theorem \ref{Theorem 2} yields a short exact sequence
\begin{equation}\label{eq: Bx short exact sequence}\begin{tikzcd}
	0 & {\BBun_{B^{\times},\et}} & {\BBun_{B^{\times},v}} & {H^0(X,\Omega_X^1\otimes_{\Os_X}B)(-1) \otimes_K \G_a} & {0.}
	\arrow[from=1-1, to=1-2]
	\arrow[from=1-2, to=1-3]
	\arrow[from=1-3, to=1-4]
	\arrow[from=1-4, to=1-5]
\end{tikzcd}\end{equation}
This recovers \cite[Thm. 1.5]{heuer2023simpsoncorr}. Note that via the canonical decomposition of Theorem \ref{Theorem 0}
\[ H_v^1(X,B^{\times}) = H_{\et}^1(X,B^{\times}) \oplus H^0(X,\Omega_X^1 \otimes B)(-1),\]
we recover the association $\theta \in H^0(X,\Omega_X^1 \otimes B)(-1) \mapsto [\Ls_{\theta}] \in H_v^1(X,B^{\times})$ of \cite[4]{heuer2023simpsoncorr} at least when $B$ is locally free. We note that it would be interesting to extend the methods of this article to more general sheaves of groups over $X$ such as $B^{\times}$, for $B$ a coherent commutative $\Os_X$-algebra.

\subsection*{Acknowledgment}
This project originated from a discussion with Ben Heuer, where the question of the degeneration of the multiplicative Hodge--Tate spectral sequence was raised. We thank him heartily for many helpful conversations, especially his suggestion to think about geometric Hodge--Tate spectral sequences, for answering all our questions and for carefully reading through the project's early versions. We also thank Annette Werner for very helpful discussions and for her many insightful comments on early drafts of this article. We thank Annie Littler, Konrad Zou and Jefferson Baudin for their comments and for interesting discussions. Finally, we thank the anonymous referee for helpful comments.

The present article is part of the PhD thesis of the author. This project was funded by Deutsche
Forschungsgemeinschaft (DFG, German Research Foundation) through the Collaborative Research Centre TRR 326 \textit{Geometry and Arithmetic of Uniformized Structures} - Project-ID 444845124. Our research was also partially supported by the Simons Foundation through the Simons Collaboration on Perfection in Algebra, Geometry, and Topology.

\subsection*{Notations and conventions}
\begin{itemize}
    \item We fix a prime number $p$. We denote by $K$ a non-archimedean field extension of $\Q_p$, with ring of integers $\Os_K$ and subspace of topologically nilpotent elements $\ma$. More generally, we consider open and bounded valuation subrings $K^+ \sub K$ contained in $\Os_K$. Such a pair $(K,K^+)$ will sometimes be referred to as a non-archimedean field. Throughout, we will work with analytic adic spaces in the sense of Huber \cite{huber2013étale}. For simplicity, we will simply talk about adic spaces over $K$ when dealing with adic spaces over $\Spa(K,K^+)$.
    \item A rigid space over $K$ will mean for us an adic space locally of topologically finite type over $K$. When $K^+=\Os_K$, there is an equivalence of categories between (quasi-)separated rigid analytic varieties in the sense of Tate and (quasi-)separated rigid spaces in the above sense \cite[(1.1.11)]{huber2013étale}.
    \item We work with perfectoid spaces in the sense of \cite{scholze2011perfectoid}. We let $\Perf_K$ denote the category of affinoid perfectoid spaces over $K$, or equivalently of perfectoid $(K,K^+)$-algebras. We may equip it with the étale or the $v$-topology, and we denote the resulting sites by $\Perf_{K,\et}$ and $\Perf_{K,v}$.
    \item Throughout we will often work with sousperfectoid adic spaces over $\Q_p$, i.e. adic spaces that are locally of the form $\Spa(R,R^+)$ where $R$ is sousperfectoid \cite[§6.3]{SW20}. This contains both perfectoid spaces and smooth rigid spaces over perfectoid fields. 
    \item If $X$ is either a sousperfectoid space or a rigid space, the étale site $X_{\et}$ of Kedlaya--Liu \cite[Def. 8.2.19]{kedlaya2015relative} consists only of sheafy adic spaces. A map $f\colon Y \rightarrow X$ is called standard-étale if $X$ and $Y$ are affinoid and $f$ can be written as a composition of finite-étale maps and rational open immersions. These form a basis of the étale site. More generally, the theory of smooth adic spaces over sousperfectoid spaces is well-behaved, see \cite[§IV.4.1]{fargues2024geometrization}. In particular, this produces sousperfectoid spaces again. A map $Y\rightarrow X$ is called standard-smooth if it can be written as a composition of finite-étale maps, rational open immersions and projections $\B_S^1=\B^1 \times_{\Spa(\Q_p)} S \rightarrow S$, where $\B^1=\Spa(\Q_p\langle T \rangle)$.
    \item Scholze's diamond functor \cite[§15]{scholze2022etale} associates to any adic space $X$ over $\Q_p$ a diamond $X^{\diamondsuit}$ over $\Spd(\Q_p)$. Under the equivalence $\Perf_{\F_p, /\Spd(\Q_p)} \cong \Perf_{\Q_p}$, sending a characteristic $p$ perfectoid space $S\rightarrow \Spd(\Q_p)$ to the corresponding untilt $S^{\sharp} \rightarrow \Spa(\Q_p)$, we may equivalently consider $X^{\diamondsuit}$ as a sheaf on $\Perf_{\Q_p,v}$, namely the functor of points. By \cite[Lemma 15.6]{scholze2022etale}, for any adic space $X$ over $\Q_p$, we have an equivalence of sites $X_{\et} \cong X_{\et}^{\diamondsuit}$. Furthermore, we will often restrict to a situation where the functor $(\cdot)^{\diamondsuit}$ is fully faithful. Thus, we will freely switch back and forth between adic spaces and their associated diamonds, and we will sometimes omit the diamond symbol when the context is clear.
    \item We will also require to work with various big sites over adic spaces. If $X$ is an adic space over $\Q_p$, we denote by $X_v = \LSD_{X,v}$ the $v$-site of $X$, with underlying category all locally spatial diamonds over $X^{\diamondsuit}$ \cite[§14]{scholze2022etale}. We will occasionally also equip this category with the étale topology and denote it by $\LSD_{X,\et}$. We will also denote by $\Perf_X$ the category of affinoid perfectoid spaces living over $X$, which we can equip either with the étale or the $v$-topology. Note that $\Perf_X$ need not be a slice category. If $X$ is a sousperfectoid space or a rigid space, we also set $\Sm_{/X,\et}$ to be the category of adic spaces smooth over $X$, equipped with the étale topology.
    \item Given a morphism of adic space $\pi\colon X\rightarrow S$, we have an associated morphism of small étale site $\pi_{\et}\colon X_{\et} \rightarrow S_{\et}$. We will also consider the induced morphism between big étale sites, for a choice of big étale site, e.g. $\Sm_{/S,\et}$ or $\Perf_{S,\et}$. To emphasize this difference, we will denote this morphism by $\pi_{\Et}$.
    \item This article is primarily concerned with commutative groups. Therefore, all groups will be taken to be commutative, unless mentioned otherwise.
\end{itemize}

\section{Preliminaries}\label{Section: Preliminaries}

\subsection{Good adic spaces}
We define a class of adic spaces that we will be working with throughout the paper.
\begin{definition}\label{def: good adic spaces}
    Let $X$ be an adic space over $\Q_p$ that is either a sousperfectoid space or a rigid space over some non-archimedean field $(K,K^+)$. We say that $X$ is good if the natural map
    \[ \Os_{X_{\et}} \rightarrow\nu_*\Os_{X_v}\]
    is an isomorphism, where $\nu\colon X_v \rightarrow X_{\et}$ is the natural map of sites. We denote by $\Adic_{\Q_p}$ the category of good adic spaces over $\Q_p$.
\end{definition}

\begin{example}
    \begin{enumerate}
        \item Any perfectoid space is a good adic space.
        \item Let $X$ be a rigid space over a non-archimedean field $(K,K^+)$ over $\Q_p$. Then by \cite[Thm. 10.3, Lemma 6.4]{Kedlaya2020Sheafiness}, $X$ is good if and only if it is seminormal.
    \end{enumerate}
\end{example}
\begin{remark}
    A similar condition on adic spaces was recently introduced in \cite[Def. 3.1.9]{graham2025padicfouriertheoryfamilies} under the name locally fiercely $v$-complete adic spaces. We show below in Proposition \ref{Prop: good spaces closed under smooth maps} that good adic spaces over $\Q_p$ are stable under smooth maps. It follows that the good adic spaces over $\Q_p$ are exactly the locally fiercely $v$-complete spaces that are either sousperfectoid or rigid spaces. We do not know whether any sousperfectoid space over $\Q_p$ is good.
\end{remark}

\begin{proposition}\label{prop: diamond functor ff on good adic spaces}
    Let $X$ be a good adic space over $\Q_p$.
    \begin{enumerate}
        \item Let $\nu\colon X_v \rightarrow X_{\et}$ denote the natural map of sites, then we have isomorphisms
        \[ \Os_{X_{\et}}^+ \xrightarrow{\cong} \nu_*\Os_{X_v}^+, \quad \Os_{X_{\et}}^+/p^n \xrightarrow{\cong} \nu_*(\Os_{X_v}^+/p^n), \quad \Os_{X_{\et}}^+ = \varprojlim_n \Os_{X_{\et}}^+/p^n .\]
        \item For $Y$ another good adic space over $\Q_p$, we have a natural isomorphism
        \[ \Hom_{\Spa(\Q_p)}(X,Y) \xrightarrow{\cong} \Hom_{\Spd(\Q_p)}(X^{\diamondsuit},Y^{\diamondsuit}).\]
    \end{enumerate}
\end{proposition}
\begin{proof}
    To prove the second point, we may work locally on $\vert Y \vert = \vert Y^{\diamondsuit} \vert$ and assume that $Y=\Spa(B,B^+)$ is affinoid. We have the following adjunction, left to the reader
    \[ \Hom_{\Spd(\Q_p)}(X^{\diamondsuit},\Spd(B,B^+)) = \Hom_{\cts}((B,B^+),(\Os_v(X^{\diamondsuit}),\Os_v^{+}(X^{\diamondsuit}))).\]
    It thus remains to show that the natural map
    \[ \Os^+(X) \rightarrow \Os_v^+(X^{\diamondsuit})\]
    is an isomorphism, so that it suffices to show the first point. The isomorphism $\nu_*(\Os_{X_v}^+/p^n) = \Os_{X_{\et}}^+/p^n$ is \cite[Prop. 2.8, Prop. 2.13]{MannWerner2022loc} when $X$ is a rigid space and \cite[Corollary 2.15]{heuer2022gtorsors} when $X$ is sousperfectoid. The isomorphism $\nu_*\Os_{X_v}^+ = \Os_{X_{\et}}^+$ follows from $\nu_*\Os_{X_v} = \Os_{X_{\et}}$. Finally, to show the last isomorphism, it is enough to show the isomorphism
    \[\Os_{X_{v}}^+ \xrightarrow{\cong }\varprojlim_n \Os_{X_{v}}^+/p^n.\]
    This can be checked on affinoid perfectoids $Y$. Injectivity is clear. For surjectivity, given a collection $(s_n)_{n\geq 1}$ of compatible sections $s_n \in \Os_v^+/p^n(Y)$, let $c_n\in H_v^1(Y,\Os_v^+)$ be the obstruction class to lifting $s_n$ to a section in $\Os^+(Y)/p^n$. Then $c_n = pc_{n+1}$. Since $H_v^1(Y,\Os_v^+)$ is almost zero, by \cite[Prop. 8.8]{scholze2022etale}, it is in particular annihilated by $p$. Hence $c_n$ vanishes and $(s_n)_n$ is in the image of the above map, as required. This concludes the proof. 
    \end{proof}

    \begin{proposition}\label{Prop: good spaces closed under smooth maps}
    Let $X$ be a good adic space over $\Q_p$ and let $f\colon X'\rightarrow X$ be a smooth map. Then $X'$ is a good adic space.
\end{proposition}
\begin{proof}
    The claim is local on $X_{\et}$. Therefore, we may assume that $X=\Spa(A,A^+)$, $X'=\Spa(B,B^+)$, and $f$ is standard-smooth, i.e a composition of finite-étale maps, rational open immersions and projections $\B_S^1 \rightarrow S$. We may further assume that $X'=\B_X^1$, so that $B=A\langle T \rangle$. Fix a pro-étale présentation $X^{\diamondsuit} = Y/R$ for perfectoids $Y$ and $R\sub Y\times Y$. By Proposition \ref{prop: diamond functor ff on good adic spaces}(1), we have left exact sequences, for each $n\geq 1$
\[\begin{tikzcd}
	0 & {\Os_{\et}^+/p^n(X)} & {\Os_{\et}^+/p^n(Y)} & {\Os_{\et}^+/p^n(R).}
	\arrow[from=1-1, to=1-2]
	\arrow[from=1-2, to=1-3]
	\arrow["{p_1^*-p_2^*}", from=1-3, to=1-4]
\end{tikzcd}\]
Upon adjoining a polynomial variable, taking the inverse limit over $n$ and inverting $p$, we obtain the left exact sequence
\[\begin{tikzcd}
	0 & {\Os_{\et}(\B_X^1)} & {\Os_{\et}(\B_Y^1)} & {\Os_{\et}(\B_R^1).}
	\arrow[from=1-1, to=1-2]
	\arrow[from=1-2, to=1-3]
	\arrow[from=1-3, to=1-4]
\end{tikzcd}\]
By a $5$-lemma argument, to show that
\[ \Os_{\et}(\B_X^1) \cong \Os_v(\B_X^{1,\diamondsuit}),  \]
it is enough to show that
\[ \Os_{\et}(\B_Y^1) \cong \Os_v(\B_Y^{1,\diamondsuit}), \quad \Os_{\et}(\B_R^1) \cong \Os_v(\B_R^{1,\diamondsuit}).  \]
Hence, we may assume that $X$ is affinoid perfectoid. In that case, $X$ is diamantine, in the sense of \cite[Def. 11.1]{Kedlaya2020Sheafiness}, so that the result follows from \cite[Thm. 11.18]{Kedlaya2020Sheafiness}.
\end{proof}

We denote by $\Adic_{X}$ the category of good adic spaces over a fixed good adic space $X$. By Proposition \ref{Prop: good spaces closed under smooth maps}, we may endow it with the étale topology, and we denote by $\Adic_{X,\et}$ the resulting site. We will also sometimes consider $v$-sheaves on $\Adic_{X}$. These are defined to be presheaves $\Fs$ such that, for any $v$-cover $Y' \rightarrow Y$ of good adic spaces over $X$ (i.e. a map that induces a $v$-cover $Y'^{\diamondsuit} \rightarrow Y^{\diamondsuit}$) and for any $v$-cover $Y'' \rightarrow Y'^{\diamondsuit} \times_{Y^{\diamondsuit}}Y'^{\diamondsuit}$ from a good adic space $Y''$, we have
\[\begin{tikzcd}
	{\Fs(Y)=\Eq(\Fs(Y')} & {\Fs(Y'')).}
	\arrow[shift right, from=1-1, to=1-2]
	\arrow[shift left, from=1-1, to=1-2]
\end{tikzcd}\]
These are equivalent to sheaves on the site $X_v$.

\subsection{Smoothoid spaces}\label{subsection: Smoothoid spaces}

Fix a perfectoid field $(K,K^+)$ over $\Q_p$. We recall some facts about smoothoid spaces, introduced in \cite{heuer2022moduli}, which form a subclass of good adic spaces over $K$ admitting a well-behaved notion of differentials. All the results of this section are taken from \cite[§2]{heuer2022moduli}.

\begin{definition}{(\cite[Def. 2.2]{heuer2022moduli})}\label{def: Smoothoid spaces}
    An adic space $X$ over $K$ is called smoothoid if it admits an open cover by adic spaces that are smooth over a perfectoid space $Y$ over $K$. We denote by $\Smd_K$ the category of smoothoid spaces over $K$. 
\end{definition}

\begin{example}
    Given a smooth morphism of rigid spaces $X\rightarrow S$ over $K$ and a morphism $Y\rightarrow S$ from a perfectoid space $Y$, the fibre product $X\times_S Y$ is smoothoid. 
\end{example} 

By \cite[Lemma 2.6]{heuer2022moduli}, any smoothoid space over $K$ is sousperfectoid and a good adic space in the sense of Definition \ref{def: good adic spaces}. Moreover, the category $\Smd_K$ of smoothoid spaces is clearly closed under taking small étale sites. We may thus turn $\Smd_K$ into a site by equipping it with the étale topology, which we denote by $\Smd_{K,\et}$.

\begin{definition}{(\cite[Def. 2.10]{heuer2022moduli})}
    Let $X$ be a smoothoid space and let $\nu\colon X_v \rightarrow X_{\et}$ be the natural map of sites. We set, for any $n\geq 0$
    \[ \womega_X^n \coloneqq R^n\nu_*\Os_X.\]
\end{definition}
\begin{example}\label{Example: womega of fiber product}
\begin{enumerate}
    \item For $X$ a smooth rigid space, we have \cite[Prop. 3.23]{Scholze2013}
\[ \womega_X^1 = \Omega_{X/K}^1\{-1\},\]
where $\Omega_{X/K}^1$ is the sheaf of relative differentials of Huber \cite[(1.6.2)]{huber2013étale} and $\{-1\}$ denotes a Breuil--Kisin--Fargues twist \cite[Def. 2.24]{heuer2021line}. If $K$ contains $\Q_p(\mu_{p^{\infty}})$, this is canonically isomorphic to a Tate twist, by adapting the proof of \cite[Prop. 6.7]{scholze2013padicHodge}. In general, if $C$ denotes a completed algebraic closure of $K$, we have a canonical, $\Gal_K$-equivariant isomorphism
\[ \womega_X^1 \otimes_K C \cong \Omega_{X/K}^1\otimes_K C(-1).\]
\item Let $Y$ be a perfectoid space over $K$, then by $v$-acyclicity of affinoid perfectoid spaces \cite[Prop. 8.8]{scholze2022etale}, $\womega_Y^n = 0$.
\item Given a diagram
\[\begin{tikzcd}
	& Y \\
	X & {S,}
	\arrow["g", from=1-2, to=2-2]
	\arrow["\pi", from=2-1, to=2-2]
\end{tikzcd}\]
where $\pi$ is a smooth morphism of rigid spaces and $Y$ is any perfectoid space, we have \cite[Prop. 2.9.(2)]{heuer2022moduli}
\[ \womega_{X\times_S Y}^1 = g^*\Omega_{X/S}^1\{-1\}.\]
This generalizes the two previous examples.
\end{enumerate}
\end{example}

For a general smoothoid space $X$, the sheaf $\womega_X^n$ is a vector bundle \cite[Prop. 2.9.(1)]{heuer2022moduli}, satisfying all the compatibility properties of the Kähler differentials. The assignement
\begin{align}\label{formula: womega}
    \womega^n\colon \Smd_K \rightarrow \Ab\, , \quad X \mapsto \Gamma(X,\widetilde{\Omega}_X^n)
\end{align}
defines a sheaf for the étale topology and we have
 \[ \womega^n = R^n\mu_*\Os,\]
 where $\mu\colon K_v \rightarrow \Smd_{K,\et}$ is the natural map of sites \cite[Lemma 2.12]{heuer2022moduli}.

\subsection{Zariski-constructible sheaves}\label{subsection: Zariski-constructible sheaves}
We recall the notion of Zariski-constructible sheaves on rigid spaces in characteristic $0$ and some of their basic properties. Let $X$ be a rigid space over a non-archimedean field $(K,K^+)$ over $\Q_p$. Recall that $X$ comes with a natural Zariski topology $X_{\Zar}$ \cite[§2.1]{newton2019irred}. 
\begin{definition}{(\cite{Hansen_2020})}\label{Def: zariski-constructible sheaves}
    A sheaf $\Fs$ on $X_{\et}$ is Zariski-constructible if there exists a locally finite stratification $X=\coprod_i X_i$ where each $X_i$ is locally closed in $X$ in the Zariski topology and $\Fs\restr{X_i}$ is étale-locally constant finite. The sheaf $\Fs$ is ind-Zariski-constructible if it can be written as a filtered colimit of Zariski-constructible étale sheaves.
\end{definition}
\begin{remark}\label{Remark: zariski constructibility and overconvergence}
    The Zariski-constructible sheaves on $\Spa(L,L^+)$ are exactly the finite étale-locally constant sheaves. Hence any Zariski-constructible sheaf $\Fs$ on a rigid space $X$ is overconvergent \cite[Def. 8.2.1]{huber2013étale}. In particular, if $X_0 = X\times_{\Spa(K,K^+)} \Spa(K,\Os_K)$ and we let $i\colon X_0 \rightarrow X$ denote the induced inclusion, we have $i_*i^*\Fs = \Fs$. Moreover, the maps $i_*$ and $i^*$ are immediately seen to define an equivalence of categories between the respective categories of Zariski-constructible sheaves. This allows us to transport to our setup many results from \cite{Hansen_2020} and \cite{bhatt2021functors}, where there is a standing assumption that $K^+=\Os_K$.
\end{remark}

\begin{thm}{(\cite[Thm. 3.10, Thm. 3.15]{bhatt2021functors})}\label{Thm: properties of zariski-constructible sheaves}
    Let $\pi\colon X \rightarrow S$ be a proper map of rigid spaces over $K$. Let $\Fs$ be an ind-Zariski-constructible abelian sheaf on $X_{\et}$. Fix any $n\geq 0$.
    \begin{enumerate}
        \item The sheaf $R^n\pi_{\et,*}\Fs$ is an ind-Zariski-constructible sheaf on $S_{\et}$. It is Zariski-constructible if $\Fs$ is. 
        \item Let $Y\rightarrow S$ be any morphism from a rigid space over $K$ and form the pullback diagram
\[\begin{tikzcd}
	{X\times_S Y} & X \\
	Y & {S.}
	\arrow["{g'}", from=1-1, to=1-2]
	\arrow["{\pi'}"', from=1-1, to=2-1]
	\arrow["\pi", from=1-2, to=2-2]
	\arrow["g"', from=2-1, to=2-2]
\end{tikzcd}\]
Then we have a natural isomorphism of sheaves on $Y_{\et}$
\[ g^*R^n\pi_{\et,*}\Fs \cong R^n\pi'_{\et,*}(g'^*\Fs).  \]
    \end{enumerate}
\end{thm}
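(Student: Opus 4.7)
The plan is to reduce both statements to the corresponding results of Bhatt--Hansen \cite[Thm. 3.10, Thm. 3.15]{bhatt2021functors}, which are proven under the standing assumption $K^+ = \Os_K$. The reduction is made possible by the equivalence noted in Remark \ref{Remark: zariski constructibility and overconvergence}: setting $X' = X \times_{\Spa(K,K^+)} \Spa(K, \Os_K)$ and $S' = S \times_{\Spa(K,K^+)} \Spa(K, \Os_K)$ with natural open immersions $i\colon X' \monic X$ and $j\colon S' \monic S$, the adjunctions $(i^*, i_*)$ and $(j^*, j_*)$ restrict to equivalences between the respective categories of Zariski-constructible sheaves, because such sheaves are overconvergent.

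For (1), I would apply \cite[Thm. 3.10]{bhatt2021functors} to the base-changed proper morphism $\pi'\colon X' \rightarrow S'$ of rigid spaces over $(K,\Os_K)$ and to the Zariski-constructible sheaf $i^*\Fs$ on $X'$; this yields that $R^n\pi'_{\et,*}(i^*\Fs)$ is Zariski-constructible on $S'$. The task is then to identify this sheaf with $j^* R^n\pi_{\et,*}\Fs$, i.e. to show that the canonical base-change map
\[ j^* R^n\pi_{\et,*}\Fs \longrightarrow R^n\pi'_{\et,*}(i^*\Fs) \]
is an isomorphism. Since $i$ and $j$ are open immersions that only differ from the identity by the addition of higher-rank points, this reduces to the general compatibility of higher direct images with pullback along open immersions of the base. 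Invoking the equivalence of Remark \ref{Remark: zariski constructibility and overconvergence} once more, this identifies $R^n\pi_{\et,*}\Fs$ with $j_*R^n\pi'_{\et,*}(i^*\Fs)$, and the latter is Zariski-constructible on $S$ because $j_*$ transports Zariski-constructibility back.

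For (2), I would apply the analogous procedure to the Cartesian diagram in the statement: form the corresponding Cartesian diagram over $\Spa(K,\Os_K)$ by setting $Y' = Y\times_{\Spa(K,K^+)}\Spa(K,\Os_K)$, pull $\Fs$ back along $i$, and invoke \cite[Thm. 3.15]{bhatt2021functors} in that setting. The desired base-change isomorphism $g^*R^n\pi_{\et,*}\Fs \cong R^n\widetilde{\pi}_{\et,*}(\widetilde{g}^*\Fs)$ over $Y$ is then obtained by applying the equivalence of Remark \ref{Remark: zariski constructibility and overconvergence} to transport the isomorphism over $Y'$ back to $Y$, using compatibility of the diagram with the passage between $K^+$-adic spaces and their classical loci.

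The main technical obstacle is to verify cleanly that the overconvergence identification $\Fs \cong i_* i^*\Fs$ is compatible with the formation of proper higher direct images, i.e. that the base-change arrow along the open immersion $j$ is an isomorphism on Zariski-constructible input. This ultimately rests on the fact that the small étale site of $X$ is obtained from that of $X'$ by adding only higher-rank points which do not contribute new étale cohomology classes for overconvergent sheaves. Once this compatibility is in hand, both (1) and (2) follow formally from the cited theorems.
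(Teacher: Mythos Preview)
Your proposal is correct and follows essentially the same route as the paper: reduce to the case $K^+=\Os_K$ via the open immersion $\Spa(K,\Os_K)\hookrightarrow\Spa(K,K^+)$ and the overconvergence equivalence of Remark~\ref{Remark: zariski constructibility and overconvergence}, then quote \cite[Thm.~3.10, Thm.~3.15]{bhatt2021functors}. The paper dispatches what you call the ``main technical obstacle'' in one stroke by citing \cite[Prop.~8.2.3(ii)]{huber2013étale}, which says directly that $R^n\pi_{\et,*}\Fs$ is overconvergent whenever $\pi$ is proper and $\Fs$ is overconvergent; combined with the trivial compatibility of higher direct images with base change along the open embedding $S'\subseteq S$, this immediately gives $j_*j^*R^n\pi_{\et,*}\Fs = R^n\pi_{\et,*}\Fs$ and hence the reduction.
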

\begin{proof}
    Let us write $\Fs = \varinjlim_{i\in I} \Fs_i$ where $I$ is cofiltered and the $\Fs_i$ are Zariski-constructible. As $\pi\colon X \rightarrow S$ is qcqs, we have, by \cite[\href{https://stacks.math.columbia.edu/tag/0739}{Tag 0739}]{stacks-project}
    \[ R^n\pi_{\et,*}\Fs = \varinjlim_i R^n\pi_{\et,*}\Fs_i.  \]
    From this, we reduce to the case where $\Fs = \Fs_i$ is Zariski-constructible. Let $X_0,S_0,Y_0$ denote the respective base change along $\Spa(K,\Os_K) \rightarrow \Spa(K,K^+)$. By \cite[Prop. 8.2.3(ii)]{huber2013étale}, the sheaves $R^n\pi_{\et,*}\Fs$ are overconvergent. Furthermore, their formations are easily seen to commute with base change along $S_0 \sub S$. Hence, we may assume that $K^+=\Os_K$, in which case this is \cite[Thm. 3.10, Thm. 3.15]{bhatt2021functors}.
\end{proof}

\subsection{Locally $p$-divisible groups}\label{subsection: locally $p$-divisible groups}
We now introduce locally $p$-divisible groups, which will be the central objects of focus of this paper. At the same time, we will recall some facts about smooth relative adic groups, logarithms and exponentials that we will need throughout. We fix an adic space $X$ over $\Q_p$ that is either a sousperfectoid space or a rigid space over some non-archimedean field $(K,K^+)$ over $\Q_p$.

\begin{definition}
    A smooth relative group over $X$ is a group object $\Gs \rightarrow X$ in the category of adic spaces smooth over $X$.
\end{definition}

\begin{example}
    \begin{enumerate}
        \item Let $G$ be a rigid group over a non-archimedean field $K$. Then $G$ is smooth by \cite[Prop. 1]{Farg19}. Whenever $X$ lives over $K$, the product $G_X \coloneqq G\times_K X$ defines a smooth relative group over $X$. We may for example take $G=\G_a$, $\G_m$ or $\G_a^+ \coloneqq \B^1 = \Spa(K\langle T \rangle)$ the closed unit ball. We will sometimes write $G$ for $G_X$, as it will be clear from context which base we are considering. We refer to \cite{Farg19} and \cite{heuer2022gtorsors} for more background on rigid groups.
        \item Let $V$ be a finite dimensional $K$-vector space, then we have the smooth adic group $V \otimes_K \G_a$. We call a relative group of this form a vector group. More generally, if $E$ is a finite locally free $\Os_X$-module, the corresponding vector bundle $E \otimes_{\Os_X} \G_a$ is a smooth relative group over $X$.
        \item Let $\Gs^{\alg}$ be a smooth group scheme over a finite type $K$-scheme $X^{\alg}$. Then its analytification $(\Gs \rightarrow X) = (\Gs^{\alg} \rightarrow X^{\alg})^{\an}$ is a smooth relative group. 
         \item Assume that $X$ is a rigid space over $K$ and let $\Xs$ be an admissible formal scheme over $\Spf(K^+)$ with adic generic fibre $X$. Let $\mathfrak{S}$ be a smooth formal group scheme over $\Xs$, then its adic generic fibre $\mathfrak{S}_{\eta}$ is a smooth relative group over $X$. A smooth $X$-group is said to have good reduction if it is of this form.
    \end{enumerate}
\end{example}

\begin{definition}
    Given a smooth relative group $\Gs \rightarrow X$ with identity section $e\colon X \rightarrow \Gs$, its Lie algebra is defined to be
\[ \Lie(\Gs) \coloneqq (e^*\Omega_{\Gs/X}^1)^{\vee}.\] 
It is a finite-dimensional locally free $\Os_X$-module and we let
\[ \ga \coloneqq \Lie(\Gs) \otimes_{\Os_X} \G_a\]
denote the underlying geometric vector bundle.
\end{definition}

We now move to logarithms. Given a commutative rigid group $G$, Fargues \cite[Théorème 1.2]{Farg19} defines the subgroup $\widehat{G}$ of $p$-topological torsion of $G$ and shows that it is the domain of a logarithm map, functorially associated with $G$. These definitions were later recast by Heuer in terms of $v$-sheaves \cite[Def. 2.6]{heuer2022geometric} and generalized by Heuer--Xu to relative groups \cite[§3]{heuer2024curve}.
\begin{definition}{(\cite[Def. 2.6]{heuer2022geometric})}\label{definition of topological p-torsion subgroup}
    Let $\Fs$ be an abelian sheaf on $X_v$. The $p$-topological torsion subsheaf $\widehat{\Fs} \sub \Fs$ is defined to be the image as sheaves on $X_v$ of the map
    \[ \ev_1\colon \underline{\Hom}(\underline{\Z_p},\Fs) \rightarrow \Fs,\]
    where $\underline{\Z_p}$ denotes the profinite sheaf $\varprojlim_n \, \underline{\Z/p^n}$.
\end{definition}

\begin{lemma}{(\cite[Prop. 3.2.4]{heuer2024curve})}\label{statements about p-topological torsion subgroup}
Let $\Gs \rightarrow X$ be a commutative smooth relative group. 
    \begin{enumerate}
        \item The $v$-sheaf $\widehat{\Gs}$ is representable by an open subgroup of $\Gs$ and evaluation at $1$ defines an isomorphism of $v$-sheaves
        \[ \underline{\Hom}(\underline{\Z_p},\Gs) \xrightarrow{\cong} \widehat{\Gs}. \]
        \item There is a unique homomorphism of adic groups
        \[ \log_{\Gs}\colon \widehat{\Gs} \rightarrow \ga\]
        whose derivative is $D(\log_{\Gs}) = \id\colon \Lie(\Gs)\rightarrow \Lie(\Gs)$. The map $\log_{\Gs}$ is natural in $\Gs$ and fits in an exact sequence of smooth $X$-groups
\begin{equation}\label{logarithm exact sequence}\begin{tikzcd}
	0 & {\Gs[p^{\infty}]} & {\widehat{\Gs}} & {\ga.}
	\arrow[from=1-1, to=1-2]
	\arrow["{\log_{\Gs}}", from=1-3, to=1-4]
	\arrow[from=1-2, to=1-3]
\end{tikzcd}\end{equation}
\item If $[p]\colon \Gs \rightarrow \Gs$ is étale, so is $\log_{\Gs}$.
    \end{enumerate}
\end{lemma}

\begin{remark}\label{rmk: p map automatically étale}
When $X$ is a smooth rigid space, $[p]\colon \Gs \rightarrow \Gs$ is automatically étale by \cite[Lemma 3.2.2]{heuer2024curve}.
\end{remark}

\begin{definition}
     Let $\Gs \rightarrow X$ be a smooth commutative group. We define the sheaf $\cj{\Gs}$ on $\Adic_{X,\et}$ to be the quotient sheaf 
     \[ \cj{\Gs} = \Gs/\widehat{\Gs}.\]
 \end{definition}
\begin{remark}
    As we will see later in Proposition \ref{Prop: approximating sheaves are small v-sheaves}, $\cj{\Gs}$ is a $v$-sheaf, such that we get the same result if we compute the quotient in the category of $v$-sheaves instead.
\end{remark}

We now introduce a relative variant of the locally $p$-divisible rigid groups of \cite[§6]{heuer2023padic}.
\begin{definition}
    Let $\Gs$ be a commutative smooth $X$-group.
    \begin{enumerate}
        \item We say that $\Gs$ is an analytic $p$-divisible $X$-group if $\Gs=\widehat{\Gs}$ and $[p]\colon\Gs \rightarrow \Gs$ is finite étale and surjective (\cite[Def. 4.1]{Farg22}).
        \item We say that $\Gs$ is a locally $p$-divisible $X$-group if $\log_{\Gs}\colon \widehat{\Gs} \rightarrow \ga$ is étale and surjective.
    \end{enumerate}
\end{definition}

\begin{example}\label{Ex: locally p-divisible groups}
    \begin{enumerate}
        \item Any analytic $p$-divisible group (e.g. a vector bundle) is locally $p$-divisible \cite[§4]{Farg22}.
        \item Let $G$ be a (locally) $p$-divisible rigid group over $K$, then $G_X$ is a (locally) $p$-divisible $X$-group. Examples of locally $p$-divisible rigid groups include $G=\G_a$, $\G_m$ or more generally all commutative connected algebraic groups $G$ over $K$ \cite[Example 6.9]{heuer2023padic}.
        \item If $\Gs$ contains an open subgroup $U\sub \Gs$ with $[p]\colon U \rightarrow U$ étale surjective, then $\Gs$ is locally $p$-divisible \cite[Prop. 3.2.4(3)]{heuer2024curve}. If $X=\Spa(K,\Os_K)$ for algebraically closed $K$, the converse is true \cite[Prop. 6.10]{heuer2023padic}.
        \item Let $X$ be a rigid space and $\As \rightarrow X$ be a relative abeloid variety, i.e. a proper smooth commutative $X$-group with geometrically connected fibres. Then $[p]\colon \As \rightarrow \As$ is surjective, such that $\As$ is locally $p$-divisible, by the previous point.
        \item Let $\Gs \rightarrow X$ denote the analytification of a smooth commutative group scheme $\Gs^{\alg} \rightarrow X^{\alg}$ where $X^{\alg}$ is an algebraic variety over $K$. Then $\Gs$ is locally $p$-divisible \cite[Lemma 3.2.6]{heuer2024curve}.
    \end{enumerate}
\end{example}

We will restrict to the locally $p$-divisible groups satisfying a minor technical assumption.
\begin{definition}\label{Def: admissible locally p-divisible groups}
    Let $\Gs \rightarrow X$ be a locally $p$-divisible $X$-group. 
    \begin{enumerate}
        \item If $X$ is a rigid space, the group $\Gs$ is said to be \emph{admissible} if $\Gs[p^{\infty}]$ is ind-Zariski-constructible. 
        \item If $X$ is a sousperfectoid space, the group $\Gs$ is admissible if it arises as the pullback of an admissible locally $p$-divisible group $\Gs' \rightarrow X'$ along a map $X\rightarrow X'$, where $X'$ is a rigid space.
    \end{enumerate}
\end{definition}
By $\Gs[p^{\infty}]$ being ind-Zariski-constructible, we mean that there exists an ind-Zariski-constructible sheaf $\Fs$
on $X_{\et}$ such that $\Gs[p^{\infty}] = \nu^*\Fs$, where $\nu\colon X_v \rightarrow X_{\et}$ is the natural map of sites. There is no meaningful notion of Zariski-constructible sheaves on general sousperfectoid spaces, which is the reason for this roundabout definition. We will be mainly interested in the case where the base $X$ is a rigid space, but even so, some proofs will require to consider groups $\Gs$ over bases of the shape $X\times_S Y$, where $X\rightarrow S$ is a smooth morphism of rigid spaces and $Y$ is a perfectoid space over $S$.

All locally $p$-divisible groups appearing in Example \ref{Ex: locally p-divisible groups} except $(3)$ are admissible\footnote{We note the following caveat: In the case of rigid groups $G$, admissibility amounts to asking that $G[p^{\infty}]$ is a filtered colimit of finite étale-locally constant sheaves. This is automatic if $K^+=\Os_K$ but not in general.}. One more important example is the following. By $\Sm_{/X, \et}$, we denote the site of all rigid spaces smooth over $X$, endowed with the étale topology.
\begin{lemma}\label{Lemma: units in B is admissible}
    Let $X$ be a smooth rigid space and let $B$ be a finite locally free commutative $\Os_X$-algebra. Then the sheaf
    \[ B^{\times}\colon \Sm_{/X, \et} \rightarrow \Ab\, , \quad (Y\xrightarrow{g }X) \mapsto \Gamma(Y,g^*B)^{\times}\]
    is represented by an admissible locally $p$-divisible $X$-group.
\end{lemma}
\begin{remark}
    This is the prime example of a locally $p$-divisible group $\Gs$ where the torsion subgroups $\Gs[p^m]$ need not be finite étale over $X$, and which motivates the generality in Definition \ref{Def: admissible locally p-divisible groups}.
\end{remark}
\begin{proof}
    Let $\B \rightarrow X$ denote the vector bundle underlying the locally free sheaf $B$. The norm map $\Nr\colon B \rightarrow \Os_X$ induces a map of $X$-spaces $N\colon \B \rightarrow \G_{a,X}$ and $B^{\times}$ is easily seen to be represented by the open subspace $N^{-1}(\G_{m,X})$. Let $f\colon X' \rightarrow X$ denote the finite flat morphism associated with $B$. It is clear that 
    \[ f_{\Et,*}^{\rig}\G_{m,X'} = B^{\times},\]
    where
    \[ f_{\Et}^{\rig}\colon \Sm_{/X',\et} \rightarrow \Sm_{/X,\et} \]
    is the natural map of sites. By Theorem \ref{Thm: properties of zariski-constructible sheaves}(1), $B^{\times}[p^m] = f_{\Et,*}^{\rig}\mu_{p^m}$ is Zariski-constructible. Moreover, $f_{\Et,*}^{\rig}$ is exact, by \cite[Cor. 2.6.6]{huber2013étale}, such that 
    \[ R^1f_{\Et,*}^{\rig}\mu_{p^m}=0.\]
    It follows that $[p]\colon B^{\times} \rightarrow B^{\times}$ is surjective, and it is also étale by Remark \ref{rmk: p map automatically étale}. In particular $B^{\times}$ is locally $p$-divisible, as required.
\end{proof}

We will occasionally use the $p$-adic exponential, which converges on an open subgroup of the Lie algebra and defines a natural section of the logarithm.
\begin{lemma}\label{lemma: exponential converges}
    Let $X$ be a qcqs rigid space over $(K,\Os_K)$ and let $\Gs$ be a commutative smooth $X$-group. Then there exists an open $\Os_X^+$-subvector bundle $\ga^{\circ} \sub \ga$ and an open embedding of $X$-groups
    \[ \exp_{\Gs}  \colon \ga^{\circ} \hookrightarrow \widehat{\Gs} \]
    that is a section of the logarithm $\log_{\Gs}$. The exponential is functorial in $\Gs$ (i.e. the obvious diagram commutes after shrinking $\ga^{\circ}$).
\end{lemma}
\begin{remark}\label{rem: exponential on trivial vector bundles}
    The lemma stays valid if $X$ is any quasi-compact rigid space over $(K,K^+)$ or sousperfectoid space, assuming that the Lie algebra $\ga$ comes from an $\Os_X^+$-vector bundle, e.g. if $\ga$ is trivial.
\end{remark}
\begin{proof}
We start by showing that there exists a finite locally free $\Os_X^+$-submodule $E \sub \Lie(\Gs)$ such that $\Lie(\Gs) = E[\frac{1}{p}]$. For this, we view $X$ as a coherent rigid space in the sense of \cite[Def. 4.1.6]{Abbes2010} 
and thus we find a formal model $\Xs$ of $X$ topologically of finite presentation over $\Spf(\Os_K)$. By \cite[Thm. 4.8.18.(ii)]{Abbes2010}, the $\Os_X$-module $\Lie(\Gs)$ admits a coherent formal model $\Fs$ on $\Xs$. By Raynaud--Gruson's flattening, see \cite[Thm. 5.8.1]{Abbes2010}, up to replacing $\Xs$ by an admissible formal blowup, we may assume that $\Fs$ is locally free. Then $E =\Fs \otimes_{\Os_{\Xs}} \Os_X^+$ is as required. 

Next, we consider the induced open subgroup 
   \[\ha = E\otimes_{\Os_X^+} \G_a^+ \sub \ga.\]
   By \cite[Prop. 3.2.1]{heuer2024curve}, we can find an open cover of $X$ by affinoid opens $X_i \sub X$ and open subgroups $V_i \sub \ga_{X_i}$ such that the logarithm restricts to an isomorphism $\log_{\Gs}^{-1}(V_i) \xrightarrow{\cong} V_i$. By \cite[Lemma 3.1.2]{heuer2024curve}, there exists integers $n_i\geq 0$ such that $p^{n_i}\ha_{X_i} \sub V_i$ for each $i$. As $X$ is quasi-compact, we can replace the open cover $\{X_i\}_i$ by a finite subcover, and we find that $\ga^{\circ} \coloneqq p^n \ha$ is as required, for $n \gg 0$.
\end{proof}

We now fix a non-archimedean field $(K,K^+)$ over $\Q_p$. We recall from \cite[§6]{heuer2023padic} how a choice of exponential for $K$ induces a compatible system of exponentials on $K$-rational points for admissible locally $p$-divisible rigid groups $G$.
\begin{definition}
    An exponential for $K$ is a continuous group morphism
    \[ \Exp \colon K \rightarrow 1+\ma\]
    extending the usual analytic exponential function and splitting the logarithm map $\log\colon 1+\ma \rightarrow K$.
\end{definition}
Exponentials for $K$ exist as soon as $K$ is algebraically closed \cite[Lemma 6.2]{heuer2023padic}.

\begin{proposition}\label{exponential on K points of locally p-divisible group}
    Let $K$ be an algebraically closed complete extension of $\Q_p$ and let $G$ be an admissible locally $p$-divisible rigid group over $K$. Then a choice of exponential $\Exp$ for $K$ induces a continuous section
    \[ \Exp_G\colon \Lie(G) \rightarrow \widehat{G}(K)\]
    to the logarithm map $\log_G$ on $K$-points. For fixed $\Exp$, the map $\Exp_G$ is natural in $G$.
\end{proposition}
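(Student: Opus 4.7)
I plan to follow the construction of \cite[Thm. 6.12]{heuer2023padic}, combining the convergent local $p$-adic exponential with the splitting $\Exp$. First, I would apply Lemma \ref{lemma: exponential converges} to $G$ viewed as a smooth commutative rigid group over $X = \Spa(K, \Os_K)$ to obtain an open subgroup $\ga^{\circ} \subset \ga$ and an open embedding $\exp_G\colon \ga^{\circ} \hookrightarrow \widehat{G}$ splitting $\log_G$. Evaluating at $K$-points gives a continuous group homomorphism $\exp_G\colon \ga^{\circ}(K) \to \widehat{G}(K)$ that provides a continuous section of $\log_G$ on the open subgroup $\ga^{\circ}(K) \subseteq \Lie(G)$.

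Next, I would extend this local exponential to all of $\Lie(G)$. Given $v \in \Lie(G)$, choose $n \geq 0$ large enough so that $p^n v \in \ga^{\circ}(K)$, and define $\Exp_G(v) \in \widehat{G}(K)$ by imposing the relation $[p^n] \Exp_G(v) = \exp_G(p^n v)$, equivalently $\log_G(\Exp_G(v)) = v$. Since $G$ is admissible, the torsion subgroup $G[p^n]$ is Zariski-constructible, and since $K$ is algebraically closed, $G[p^n](K)$ is a finite abelian group; moreover $[p^n]\colon \widehat{G}(K) \to \widehat{G}(K)$ is surjective (as $\log_G$ is étale surjective on $K$-points) with kernel $G[p^n](K)$. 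The set of candidates for $\Exp_G(v)$ is therefore a non-empty $G[p^n](K)$-torsor, and one verifies that the condition is independent of the choice of $n$ for $n$ sufficiently large, using $[p^{m}]\exp_G(p^{n}v) = \exp_G(p^{n+m}v)$.

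The datum of $\Exp$ is used precisely to select these $p^n$-th roots canonically and functorially in $G$. In the baseline case $G = \G_m$, one recovers $\Exp_{\G_m} = \Exp$ tautologically: the compatible system $(\Exp(v/p^n))_{n \geq 0}$ defines an element of the universal cover $\widetilde{\G}_m = \varprojlim_{[p]} \widehat{\G}_m$ projecting to $\Exp(v) \in \widehat{\G}_m(K)$. For general admissible locally $p$-divisible $G$, the analogous construction on $\widetilde{G} = \varprojlim_{[p]} \widehat{G}$, together with the functoriality of the universal cover in $G$ and the $\mathbb{Z}_p$-module structure on $\widehat{G}(K)$, yields the desired canonical section. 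Naturality in $G$ is then built into the construction, and continuity of $\Exp_G$ follows from continuity of $\Exp$ and of $\exp_G$.

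The main obstacle is the canonical and functorial choice of compatible $p^n$-th roots for all admissible locally $p$-divisible $G$ simultaneously, which requires more than the mere existence of the local exponential $\exp_G$: one must use $\Exp$ in an essential way to trivialize the Tate-module obstruction in a manner natural in $G$. A posteriori, uniqueness of the continuous section (given $\Exp$) can be seen from the fact that two such sections differ by a continuous homomorphism $\Lie(G) \to G[p^{\infty}](K)$, whose rigidity is constrained by the mismatch between the $K$-vector space topology on $\Lie(G)$ and the discrete torsion nature of $G[p^{\infty}](K)$.
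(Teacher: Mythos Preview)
The paper's proof is much shorter than yours: it simply cites \cite[Thm.~6.12]{heuer2023padic} to obtain the exponential for the base change $G' = G \times_{\Spa(K,K^+)} \Spa(K,\Os_K)$, and then uses admissibility only to show $\widehat{G}(K,\Os_K) = \widehat{G}(K,K^+)$. This last step reduces via the logarithm sequence to $G[p^{\infty}](K,\Os_K) = G[p^{\infty}](K,K^+)$, which holds because the Zariski-constructible sheaves $G[p^m]$ are overconvergent (Remark~\ref{Remark: zariski constructibility and overconvergence}). That is the entire proof.

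Your proposal instead tries to reconstruct the cited theorem. The outline is reasonable, but there is a genuine gap at the key step: you never explain \emph{how} the datum $\Exp$ (a splitting for $\G_m$ only) selects a canonical $[p^n]$-preimage of $\exp_G(p^n v)$ for general $G$. Invoking ``the analogous construction on $\widetilde{G}$'' and ``functoriality of the universal cover'' does not produce the section; one needs an actual argument linking $\Exp$ to the Tate-module obstruction for $G$, and this is the substantive content of \cite[Thm.~6.12]{heuer2023padic}. A minor error: your claim that $[p^n]\colon \widehat{G}(K)\to \widehat{G}(K)$ is surjective ``as $\log_G$ is \'etale surjective'' is false (take $G=\underline{\Z/p\Z}$); what you actually need, and do have, is that $\exp_G(p^n v)$ admits \emph{some} $[p^n]$-preimage, which follows from surjectivity of $\log_G$ on $K$-points.

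You also do not address the $(K,K^+)$ versus $(K,\Os_K)$ issue, which is the only thing the paper's own proof establishes beyond the citation. Since the ambient paper allows $K^+ \neq \Os_K$, and Lemma~\ref{lemma: exponential converges} and the cited result are formulated over $(K,\Os_K)$, this reduction is not optional.
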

\begin{proof}
    Let $G'= G \times_{\Spa(K,K^+)}\Spa(K,\Os_K)$ be the associated locally $p$-divisible group over $(K,\Os_K)$. By \cite[Thm. 6.12]{heuer2023padic}, we get an exponential, functorial in $G'$
    \[ \Exp_{G'}\colon \Lie(G) \rightarrow \widehat{G}'(K,\Os_K) = \widehat{G}(K,\Os_K).\]
    It remains to show that $\widehat{G}(K,\Os_K) = \widehat{G}(K,K^+)$. By the logarithm sequence (\ref{logarithm exact sequence}), it is enough to show that $G[p^{\infty}](K,\Os_K) = G[p^{\infty}](K,K^+)$. This follows from the fact that $G[p^{\infty}]$ is ind-Zariski-constructible and hence overconvergent, by Remark \ref{Remark: zariski constructibility and overconvergence}.
\end{proof}

\section{Hodge--Tate spectral sequences}\label{Section: Hodge--Tate spectral sequences}
With these preparations, we are now ready to define and study the Hodge--Tate spectral sequences with locally $p$-divisible coefficients and their geometric counterparts. 

\subsection{The group theoretic spectral sequence}
We fix a perfectoid field extension $(K,K^+)$ of $\Q_p$. Let $X$ be a smoothoid space over $K$ (cf. §\ref{subsection: Smoothoid spaces}).
\begin{definition}
    Let $\Gs$ be a commutative smooth $X$-group, viewed as a $v$-sheaf on $X$. We define the \textbf{Hodge--Tate spectral sequence with $\Gs$-coefficients} to be the Leray spectral sequence associated to $\nu\colon X_v \rightarrow X_{\et}$
    \begin{align}\label{classical Hodge--Tate spectral sequence for general G} 
    E_2^{ij}(\Gs) = H_{\et}^i(X,R^j\nu_* \Gs) \Rightarrow H_v^{i+j}(X,\Gs).
\end{align}
\end{definition}

\begin{remark}
    When $\Gs=\G_a$ and $X$ is a smooth rigid space, this recovers the Hodge--Tate spectral sequence \cite[Thm. 3.20]{Scholze2013}.
\end{remark}

We have the following result, generalizing the commutative case of \cite[Thm. 4.1]{heuer2022moduli}. We delay the proof to the next section.
\begin{proposition}\label{sheafified correspondence old school}
    Let $X$ be a smoothoid space over $K$ and let $\Gs$ be an admissible locally $p$-divisible $X$-group. Then $\nu_*\Gs = \Gs$ and we have for each $j>0$ a canonical isomorphism of sheaves on $X_{\et}$, natural in $X$ and $\Gs$
    \begin{equation*}
        \HTlog\colon R^j\nu_* \Gs \xrightarrow{\cong} \widetilde{\Omega}_X^j \otimes_{\Os_X} \Lie(\Gs). 
    \end{equation*}
\end{proposition}

Using Proposition \ref{sheafified correspondence old school}, the Hodge--Tate spectral sequence takes the following form
\begin{equation}\label{classical Hodge--Tate sequence for G reformulated}
 E_2^{ij}(\Gs)= \left\{\begin{aligned}
  &H_{\et}^i(X,\widetilde{\Omega}_X^j \otimes_{\Os_X} \Lie(\Gs)) \quad &\text{if }j>0\\
  &H_{\et}^i(X,\Gs) \quad &\text{if }j=0
\end{aligned}\right\}  \Longrightarrow H_{v}^{i+j}(X,\Gs).
\end{equation}

We will prove later in Corollary \ref{Cor: The classical sseq degenerates} that the Hodge--Tate spectral sequence with $\Gs$-coefficients degenerates in the proper smooth case. For now, we treat the case where $\Gs=\ga$ is a vector bundle, in which case it essentially follows from the case $\Gs=\G_a$ and is due to Scholze \cite[Thm. 3.20]{Scholze2013} and Bhatt--Morrow--Scholze \cite[Thm. 13.3.(ii)]{Bhatt2018}. 
\begin{proposition}\label{prop: degeneration of additive Hodge--Tate spectral sequence}
    Let $X$ be a proper smooth rigid space and $E$ a finite locally free $\Os_X$-module. Then the Hodge--Tate spectral sequence
\begin{align} E_2^{ij} = H_{\et}^i(X,\womega_X^j \otimes_{\Os_X} E) \Longrightarrow H_v^{i+j}(X,E)\end{align}
degenerates at $E_2$. If $K$ is algebraically closed, a $\BdR+ /\xi^2$-lift $\X$ of $X$ induces a splitting of the spectral sequence and hence a decomposition, natural in $\X$
\[ H_v^n(X,E) = \bigoplus_{i=0}^{n}H^i(X,\womega_X^{n-i} \otimes_{\Os_X} E).\]
\end{proposition}
\begin{proof}
    Let $C$ be a completed algebraic closure of $K$ and let $C^+$ denote the completion of the integral closure of $K^+$ in $C$. We denote by $X_C$ (resp. $E_C$) the base-change of $X$ (resp. $E$) from $K$ to $C$. By \cite[Cor. 3.10]{heuer2024relative}, we have
    \[ H_v^n(X_C,E_C) = H_v^n(X,E)\otimes_K C\, , \quad H^i(X_C,\womega_{X_C}^j \otimes E_C) = H^i(X,\womega_{X}^j \otimes E)\otimes_K C.\]
    As the degeneration can be deduced from a dimension count, we may assume without loss that $K$ is algebraically closed. By the projection formula, we have
    \[ R\nu_*E = R\nu_*\Os_X \otimes_{\Os_X}E.\]
    Now by \cite[Prop. 7.2.5]{guo2023}, any lift $\X$ of $X$ to $\BdR+/\xi^2$ induces a splitting in the derived category $D(X_{\et})$, natural in $\X$
    \[ R\nu_*\Os_X  = \bigoplus_{i\geq 0} \womega_X^i[-i].\]
    There is thus a trivial Cartan--Eilenberg resolution of the complex $R\nu_*E$ given by
    \[ J^{\bullet,\bullet} = \bigoplus_{i\geq 0} J_i^{\bullet}[-i],\]
    where $J_i$ is an injective resolution of $\womega_X^i \otimes E$ in $D(X_{\et})$. It follows that the Hodge--Tate spectral sequence for $E$ decomposes as the direct sum of the spectral sequence associated to the $1$-column bicomplexes $J_i^{\bullet}$, and this decomposition is natural in $\X$. The result follows. 
\end{proof}

\begin{remark}
    For $E=\Os_X$ and algebraically closed $K$, this recovers the Hodge--Tate decomposition
\begin{align}\label{eq: the classical Hodge--Tate decomposition}
    H_{\et}^n(X,\Q_p)\otimes_{\Q_p} K = \bigoplus_{i+j=n} H^i(X,\Omega_{X/K}^j)(-j)
\end{align}
via Scholze's Primitive Comparison Theorem \cite[Thm. 5.1]{scholze2013padicHodge}.
\end{remark}

\subsection{Rigid approximation}\label{subsection: Rigid approximation}
In this technical section, we prove Proposition \ref{sheafified correspondence old school}. We will need a technique of approximation by rigid spaces, developed in \cite{heuer2022gtorsors}, that we improve upon. The results of this section will also be used extensively later on.

We first recall the notions of tilde-limits of adic spaces.

\begin{definition}{(\cite[(2.4.1)]{huber2013étale}, \cite[§2.4]{scholze2013moduli})}\label{tilde-limit}
    Let $(Y_i)_{i \in I}$ denote an inverse system of adic spaces with quasi-compact and quasi-separated (in short, qcqs) transition maps and let $Y$ be an adic space together with maps $f_i\colon Y\rightarrow Y_i$ compatible with the transition maps. We say that $Y$ is a tilde-limit of the inverse system $(Y_i)_i$ and we write 
    \[ Y\sim \varprojlim_i Y_i\]
    if the following conditions are satisfied:
    \begin{enumerate}
        \item The induced map $\vert Y \vert \rightarrow \varprojlim_i \vert Y_i \vert$ is a homeomorphism, and
        \item There exists an open cover of $Y$ by affinoids $U$ such that the map $\varinjlim \Os(U_i) \rightarrow \Os(U)$ has dense image, where the direct limit runs over all $i\in I$ and all affinoid opens $U_i \sub Y_i$ through which the map $U \rightarrow Y_i$ factors.
    \end{enumerate}
Assume moreover that all $Y_i$ and $Y$ are affinoid. We write 
    \[Y \approx \varprojlim_i Y_i \]
    if, in addition, already $\varinjlim_i \Os(Y_i) \rightarrow \Os(Y)$ has dense image.
\end{definition}

\begin{lemma}\label{diamond and étale site of strong tilde timit}
    Let $Y \sim \varprojlim_i Y_i$ and assume that all $Y_i$ and $Y$ are qcqs. 
    \begin{enumerate}
        \item  On the level of diamonds, we have 
        \[ Y^{\diamondsuit} = \varprojlim_i Y_i^{\diamondsuit}.\] 
        \item For the associated qcqs étale sites, we have 
         \[ Y_{\et, \qcqs} \cong \tworlim_i Y_{i,\et, \qcqs}. \]
    \end{enumerate}
\end{lemma}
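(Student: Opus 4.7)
The plan is to reduce both parts to well-established behavior of tilde-limits under the diamond functor, and to a standard spreading-out principle for qcqs étale morphisms of adic spaces.

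For part (1), I would verify directly that $Y^{\diamondsuit}$ represents the inverse limit $\varprojlim_i Y_i^{\diamondsuit}$ as a $v$-sheaf on $\Perf_K$. It suffices to test on affinoid perfectoid spaces $X = \Spa(R,R^+)$. Giving a map $X \to Y^{\diamondsuit}$ amounts to choosing an untilt $X^{\sharp}$ of $X$ together with a morphism of adic spaces $X^{\sharp} \to Y$. The homeomorphism $\vert Y \vert \cong \varprojlim_i \vert Y_i \vert$ from Definition \ref{tilde-limit}.(1) reconstructs the underlying topological map from any compatible system of maps to the $Y_i$, and the density of $\varinjlim_i \Os(U_i) \to \Os(U)$ from Definition \ref{tilde-limit}.(2), together with completeness of the target, uniquely assembles compatible ring maps on an affinoid cover. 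This is recorded in closely related forms in \cite[§6.4]{SW20} and \cite[Lemma 11.22]{scholze2022etale}, which I would invoke rather than reprove.

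For part (2), the content is a descent statement for qcqs étale morphisms: any qcqs étale $U \to Y$ descends to a qcqs étale $U_i \to Y_i$ for some $i$, uniquely up to enlarging $i$, and similarly for morphisms of étale objects. I would first reduce to the affinoid case by gluing along a finite qcqs cover together with its pairwise overlaps, which remain finite by the qcqs hypothesis. In the affinoid case, an étale algebra over $\Os(Y)$ is finitely presented, hence cut out by finitely many polynomial relations; the density clause of Definition \ref{tilde-limit}.(2) permits us to lift such relations to some $\Os(Y_i)$, and to match two such lifts after passing to a larger stage. As an alternative route, one can combine part (1) with the equivalence $X_{\et} \cong X^{\diamondsuit}_{\et}$ from \cite[Lemma 15.6]{scholze2022etale} and reduce to the analogous, known statement that étale sites commute with cofiltered limits of qcqs spatial diamonds along qcqs transition maps.

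The main obstacle is the descent statement underlying (2), specifically the passage from the affinoid case to the general qcqs one via gluing: one must simultaneously descend each chart and ensure the finitely many overlap identifications become compatible at one common finite stage. The qcqs hypothesis is essential here, since it keeps the indexing data finite and makes all the approximations effective; without it, the spreading-out stage would not stabilize.
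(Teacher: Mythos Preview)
Your proposal is correct. The paper's own proof is much terser: it simply cites \cite[Prop.~2.4.5]{scholze2013moduli} for part~(1), and then derives part~(2) from part~(1) via \cite[Prop.~11.23]{scholze2022etale}, which is precisely your ``alternative route'' (combining the identification of diamonds with the known commutation of qcqs \'etale sites with cofiltered limits of qcqs spatial diamonds). So on part~(2) you and the paper agree once you take the second option.

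Your primary route for~(2), a direct spreading-out argument for qcqs \'etale maps, is a genuine alternative. It is morally sound but would need more care than your sketch suggests: \'etale morphisms of adic spaces are built from compositions of finite \'etale maps \emph{and} rational localizations, and descending the latter through the density condition of Definition~\ref{tilde-limit}.(2) is not simply a matter of lifting polynomial relations. The diamond-theoretic route bypasses this by packaging both ingredients into the single statement \cite[Prop.~11.23]{scholze2022etale}, which is why the paper prefers it. For part~(1), the precise reference is \cite[Prop.~2.4.5]{scholze2013moduli}; your citation of \cite[Lemma~11.22]{scholze2022etale} concerns only the underlying topological spaces, not the full identification of diamonds.
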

\begin{proof}
    For the first point, see \cite[Prop. 2.4.5]{scholze2013moduli}. The second point then follows from \cite[Prop. 11.23]{scholze2022etale}.
\end{proof}

\begin{definition}{(\cite[Def. 2.13]{heuer2022gtorsors})}\label{approximation property}
    Let $X$ be any good adic space over $\Q_p$. A sheaf of sets $\Fs$ on $\Adic_{X,\et}$ is said to satisfy the approximation property if, for any affinoid perfectoid tilde-limit $Y \approx \varprojlim_i Y_i$ of good affinoid adic spaces $Y_i$ over $X$, we have
    \[ \Fs(Y) = \varinjlim_i \Fs(Y_i).\]
\end{definition}

\begin{remark}
    A similar condition recently appeared in \cite{scholze2024berkovichmotives}, where such sheaves are called finitary.
\end{remark}

\begin{example}\label{Ex: sheaves with the apporixation property}
    \begin{enumerate}
        \item Let $\Fs$ be a sheaf on $\Adic_{X,\et}$ that arises as the pullback of a sheaf on $X_{\et}$ along the natural map of sites $\Adic_{X,\et} \rightarrow X_{\et}$. Then $\Fs$ has the approximation property, by adapting the proof of \cite[Prop. 14.9]{scholze2022etale}.
        \item Let $Y \rightarrow X$ be an étale morphism, then $Y$ viewed as a sheaf on $\Adic_{X,\et}$ has the approximation property. This is a special case of the first point.
        \item Let $G$ be a (not necessarily commutative) rigid group over a non-archimedean field extension $K$ of $\Q_p$, and let $U \sub G$ be any open subgroup. Then the sheaf of cosets $G/U$ on $\Adic_{K,\et}$ has the approximation property, by \cite[Prop. 4.1]{heuer2022gtorsors}. We generalize this in Proposition \ref{Prop: approximating sheaves are small v-sheaves} below.
    \end{enumerate}
\end{example}

The decisive property satisfied by sheaves with the approximating property is the following.
\begin{proposition}\label{Prop: main known results about approximating sheaves}
Let $X$ be a good adic space over $\Q_p$ and let $\Fs$ be a sheaf of sets on $\Adic_{X,\et}$ with the approximation property. 
\begin{enumerate}
    \item $\Fs$ is a $v$-sheaf. In particular, $\Fs$ extends uniquely to a sheaf on $X_v$.
    \item For any inverse limit of spatial diamonds $Y = \varprojlim_i Y_i$ with qcqs transition maps, we have
    \[ \Fs(Y) = \varinjlim_i \Fs(Y_i).\]
    \item Let $\nu \colon X_v \rightarrow X_{\et}$ denote the natural map of sites. If $\Fs$ is a sheaf of groups (resp. of abelian groups), then $R^n\nu_*\Fs = 0$ for $n=1$ (resp. for all $n\geq 1$). In particular,
    \[ H_{\et}^n(X,\Fs) = H_{v}^n(X,\Fs),\]
for $n=0,1$ (resp. for all $n\geq 0$).
\end{enumerate}
\end{proposition}
\begin{proof}
    This is proven in \cite[Prop. 2.14, Lemma 2.20]{heuer2022gtorsors} for $X=\Spa(K,K^+)$, but the proof extends without change to the general case.
\end{proof}

Our main new input is the following characterization of sheaves with the approximation property.
\begin{proposition}\label{Prop: approximating sheaves are small v-sheaves}
    Let $X$ be a good adic space over $\Q_p$ and let $\Fs$ be a sheaf on $\Adic_{X,\et}$. Then the following are equivalent:
    \begin{enumerate}
        \item $\Fs$ satisfies the approximation property, and
        \item $\Fs$ admits a presentation as an étale quotient sheaf
        \[ \Fs = V/R,\]
        where $V$ is an adic space smooth over $X$ and $R\sub V\times_X V$ is an open equivalence relation.
    \end{enumerate}
In that case, the following additional statements hold: For any map $V \rightarrow \Fs$ from a smooth $X$-space $V$, the diamond $R = V \times_{\Fs} V$ is open in $V\times_X V$.
\end{proposition}

With this result in hand, we can prove Proposition \ref{sheafified correspondence old school}.

\begin{proof}[Proof of Proposition \ref{sheafified correspondence old school}]
    By Proposition \ref{Prop: approximating sheaves are small v-sheaves}, the sheaf $\cj{\Gs} = \Gs/\widehat{\Gs}$ has the approximation property. It follows from Proposition \ref{Prop: main known results about approximating sheaves}(3) that $R\nu_*\cj{\Gs} =\cj{\Gs}$, such that
    \[ R^j\nu_*\Gs = R^j\nu_*\widehat{\Gs}\, , \quad \fa j>0.\]
    Moreover, recall that the logarithm defines an exact sequence
\[\begin{tikzcd}
	0 & {\Gs[p^{\infty}]} & {\widehat{\Gs}} & \ga & {0.}
	\arrow[from=1-1, to=1-2]
	\arrow[from=1-2, to=1-3]
	\arrow["{\log_{\Gs}}", from=1-3, to=1-4]
	\arrow[from=1-4, to=1-5]
\end{tikzcd}\]
The étale $X$-group $\Gs[p^{\infty}]$ also satisfies the approximation property, by Example \ref{Ex: sheaves with the apporixation property}(2). Hence we also have $R\nu_*\Gs[p^{\infty}] =\Gs[p^{\infty}]$ and
    \[ R^j\nu_*\widehat{\Gs}= R^j\nu_*\ga\, , \quad \fa j>0.\]
Finally, we have
\[ R^j\nu_*\ga = R^j\nu_*\G_a \otimes \ga = \womega_X^j \otimes \ga\, , \quad \fa j\geq 0,\]
where we use the projection formula for the first equality.
\end{proof}

\begin{proof}[Proof of Proposition \ref{Prop: approximating sheaves are small v-sheaves}]
\begin{itemize}
\item \underline{$(1) \Rightarrow (2)\colon$}

Let $\Fs$ have the approximation property. We may assume that $X$ is affinoid. By arguing as in \cite[Prop. III.1.3]{fargues2024geometrization}, we find that $\Fs$ is a small $v$-sheaf. Let $T \rightarrow \Fs$ be a surjection from a perfectoid space $T$ and fix an open cover of $T$ by affinoid perfectoids $T_i$. We may write by \cite[Prop. 3.17]{heuer2023diamantine} 
\[ T_i \approx \varprojlim_j U_{ij} \rightarrow X,\]
where each $U_{ij}$ is affinoid smooth over $X$. Applying the approximation property, we find that $T_i \rightarrow \Fs$ factors through $V_i \coloneqq U_{i,j(i)} \rightarrow \Fs$ for some $j=j(i)$. Then we obtain a surjection of $v$-sheaves $q\colon V = \coprod_{i} V_i \rightarrow \Fs$ from a smooth $X$-space. It remains to show that $R = V\times_{\Fs} V \sub V\times_X V$ is an open subspace, for any such map $q$. Let $y\in R$ with its associated geometric point 
$\cj{y}\colon \Spa(C,C^+) \rightarrow V\times_X V$. We may write 
\[ \Spa(C,C^+) \approx \varprojlim_k W_k,\]
where the limit ranges over all affinoid spaces $W_k$ étale over $V\times_X V$ together with a factorization
\[\begin{tikzcd}
	{\Spa(C,C^+)} & {W_k} \\
	& {V \times_X V.}
	\arrow["{\cj{y_k}}", from=1-1, to=1-2]
	\arrow["{\cj{y}}"', from=1-1, to=2-2]
	\arrow[from=1-2, to=2-2]
\end{tikzcd}\]
Then the two following compositions
\[\begin{tikzcd}
	&& V \\
	{\Spa(C,C^+)} & {V \times_X V} && \Fs \\
	&& V
	\arrow["q", from=1-3, to=2-4]
	\arrow["{\cj{y}}", from=2-1, to=2-2]
	\arrow[from=2-2, to=1-3]
	\arrow[from=2-2, to=3-3]
	\arrow["q"', from=3-3, to=2-4]
\end{tikzcd}\]
are equal. By the approximation property, we deduce that already the compositions
\[\begin{tikzcd}
	&& V \\
	{W_k} & {V \times_X V} && \Fs \\
	&& V
	\arrow["q", from=1-3, to=2-4]
	\arrow[from=2-1, to=2-2]
	\arrow[from=2-2, to=1-3]
	\arrow[from=2-2, to=3-3]
	\arrow["q"', from=3-3, to=2-4]
\end{tikzcd}\]
agree for $k\gg 0$. This shows that the map $W_k \rightarrow V \times_X V$ factors through $R \sub V \times_X V$ and its image is an open neighborhood of $y \in V \times_X V$ contained in $R$. We deduce that $R$ is open in $V \times_X V$, as required. This shows the first implication and the last part of the statement.
    \item \underline{$(2) \Rightarrow (1)\colon$}

Fix an affinoid perfectoid tilde-limit $Y\approx \varprojlim_i Y_i$ of good affinoid adic spaces. We first show that the natural map
\[ \phi\colon \varinjlim_i \Fs(Y_i) \rightarrow \Fs(Y).\]
is injective. Let $i_0\in I$ and $\cj{s},\cj{t}\in \Fs(Y_{i_0})$ be such that $\cj{s}\restr{Y}=\cj{t}\restr{Y}$. We need to show that already $\cj{s}\restr{Y_i}=\cj{t}\restr{Y_i}$ for $i\gg i_0$. This may be checked after taking a standard-étale cover of $Y_{i_0}$ (replacing $Y_i$ and $Y$ by their respective pullbacks along this cover). Hence we may assume that the given sections admit lifts $s,t\in V(Y_{i_0})$. By assumption, the following diagram commutes
\[\begin{tikzcd}
	&& V \\
	Y & {Y_{i_0}} && {\Fs,} \\
	&& V
	\arrow[from=1-3, to=2-4]
	\arrow[from=2-1, to=2-2]
	\arrow["s", from=2-2, to=1-3]
	\arrow["t"', from=2-2, to=3-3]
	\arrow[from=3-3, to=2-4]
\end{tikzcd}\]
so that $(s,t)\restr{Y}$ has image in $R \sub V\times_X V$. By Lemma \ref{lemma: maps from inverse limit eventually factor through open subspace} below, we deduce that already $(s,t)\restr{Y_i}$ maps in $R$, for $i\gg i_0$, such that $\cj{s}\restr{Y_i} = \cj{t}\restr{Y_i}$. This shows injectivity.

We now show that $\phi$ is surjective. Let $\cj{s} \in \Fs(Y)$. Assume that $\cj{s}$ is in the image of $\varphi$ after replacing $Y$ by an étale cover, then we can conclude. Indeed, let $Y' \rightarrow Y$ be a quasi-compact étale cover, corresponding to an étale cover $Y_{i_0}' \rightarrow Y_{i_0}$ for some $i_0\in I$, and let $\cj{s_i'} \in \Fs(Y_i')$ be a section for $i\geq i_0$ mapping to $\cj{s}\restr{Y'}$. We claim that the section $\cj{s_i'}$ descends to a section $\cj{s_i} \in \Fs(Y_i)$  (which is then automatically mapped to $\cj{s}$), up to increasing $i$: For this, it is enough to show that $p_1^*\cj{s_i'} = p_2^*\cj{s_i'} \in \Fs(Y_i' \times_{Y_i} Y_i')$: This holds after pullback to $Y' \times_Y Y'$, hence it holds after increasing $i$, by the injectivity step.

Next we reduce to the case where $Y=\Spa(C,C^+)$ for a complete algebraically closed field $C$. Let $y\in \vert Y \vert$ and let $\{ (U_j,u_j) \}_j$ denote the cofiltered inverse system of quasi-compact étale maps $U_j \rightarrow Y$ together with a lift $u_j \in \vert U_j \vert$ of $y$. Then the corresponding geometric point $\Spa(C(y),C(y)^+) \rightarrow Y$ satisfies $\Spa(C(y),C(y)^+) \approx \varprojlim_j U_j$. Consider the cofiltered category consisting of triplets $(i,j,U_{ij}\rightarrow Y_i)$, where $U_{ij}\rightarrow Y_i$ is an étale map with $U_j = U_{ij}\times_{Y_i}Y$.  
Then it follows from \cite[Lemma 3.13]{heuer2023diamantine} (or alternatively Lemma \ref{stability property of strong tilde-limit bis}(2) below) that
\[ U_j \approx \varprojlim_{i} U_{ij}. \]
By sequential approximation, we deduce that also
\[ \Spa(C(y),C(y)^+) \approx \varprojlim_{i,j} U_{ij}.\]
Suppose that $\cj{s}(y)\in \Fs(C(y),C(y)^+)$ coincides with $\cj{s}_{ij}(y)$ for some $\cj{s}_{ij}\in \Fs(U_{ij})$. Then $\cj{s}\restr{U_j}$ and $\cj{s}_{ij}\restr{U_j}$ map to the same element of $\Fs(C(y),C(y)^+)$. By the injectivity step, these two sections are equal, up to increasing $j$. Hence, if surjectivity of $\phi$ holds for points, for each $y\in \vert Y \vert$, the restriction of $\cj{s}$ to some étale neighborhood of $y$ is in the image of $\phi$, which concludes, by the previous reduction step.

From these two reduction steps, we can without loss assume that $Y=\Spa(C,C^+)$, $\cj{s}$ admits a lift $s\in V(Y)$, $X$ is affinoid and $V$ is standard-smooth over $X$. Hence, we may write $V\rightarrow X$ as a composition
\[ V \xrightarrow{g} B \rightarrow X,\]
where $g$ is standard-étale, $B=\B_X^n$ and the last map is the canonical projection. We write 
\[ \Os(B) = \Os(X)\langle T_1, \ldots, T_n\rangle.\] 
Here, $T_1,\ldots T_n$ are Tate variables and we abbreviate these by $\underline{T}$. Let 
\[ \underline{t} = g(s)\in B(Y) \cong (C^+)^{\oplus n}.\]
By definition of tilde-limits
\[ \theta\colon \varinjlim_i \Os^+(Y_i) \rightarrow \Os^+(Y)=C^+ \]
has dense image. Hence we may find $\underline{t_i} \in \Os^+(Y_i)^{\oplus n}$ such that $\theta(\underline{t_i}) \cong \underline{t} \, (\, \modulo \varpi^N)$, where $\varpi \in K^+$ is a pseudo-uniformizer and $N\gg 0$ is a large integer. We write $\underline{t'} = \theta(\underline{t_i}) \in (C^+)^{\oplus n}$.

It remains to construct a lift $s' \in V(Y)$ of $\underline{t'} \in B(Y)$ that is $R$-equivalent to the original section $s \in V(Y)$. Note that $s'$ will automatically live in finite level, i.e. arise from a section $s_i \in V(Y_i)$ that lifts $\underline{t_i} \in B(Y_i)$, up to increasing $i$. This follows from the fact that $V \rightarrow B$ is étale, such that by \cite[Prop. 14.9]{scholze2022etale},
\[ \Hom_B(Y,V) = \varinjlim_i \Hom_B(Y_i,V).\]
 
Let $U \sub R \sub V \times_X V$ be a rational open subset containing the image of $(s,s)\colon Y \rightarrow R$. We form the following pullback diagram
\[\begin{tikzcd}
	{Y'} & U \\
	& {V\times_X V} \\
	Y & {V\times_X B.}
	\arrow[from=1-1, to=1-2]
	\arrow[from=1-1, to=3-1]
	\arrow["\lrcorner"{anchor=center, pos=0.125}, draw=none, from=1-1, to=3-2]
	\arrow[hook, from=1-2, to=2-2]
	\arrow["{\id \times g}", from=2-2, to=3-2]
	\arrow["{(s,s')}", dashed, from=3-1, to=1-2]
	\arrow["{(s,\underline{t'})}"', from=3-1, to=3-2]
\end{tikzcd}\]
A lift $s'$ as above corresponds to a diagonal dotted arrow in the above diagram and thus a section to the étale map $Y' \rightarrow Y$. Let $ W \sub (\id\times g)(U)$ be a rational open neighborhood of the image of $(s,\underline{t})\colon Y \rightarrow V \times_X B$ in the open image $(\id\times g)(U) \sub V\times_X B$. We write $W=U(\tfrac{F_1,\ldots,F_r}{G})$ for $F_j,G\in \Os^+(V\times_{X}B)$ and we let $f_j,g$ (resp. $f_j', g'$) denote their pullbacks in $\Os^+(Y)=C^+$ under the map $(s,\underline{t})$ (resp. the map $(s,\underline{t'})$). By assumption, $(s,\underline{t})(Y) \sub W$ such that
\[ \vert f_j\vert \leq \vert g \vert \neq 0, \,\, \fa j=1, \ldots r,\]
where $\vert \cdot \vert$ denotes the valuation on $C$. By construction of $\underline{t'}$, we have that $f_j\equiv f_j' \,(\, \modulo \varpi^N)$ and $g \equiv g'  \,(\, \modulo \varpi^N)$. Hence, as soon as $N$ is big enough,
\[ \vert f_j'\vert = \vert f_j\vert  \leq \vert g \vert= \vert g' \vert, \, \,\fa j=1, \ldots r.\]
It follows that also $(s,\underline{t'})\colon Y \rightarrow V\times_X B$ has image in the open subspace $W \sub V\times_X B$. After passing to pullbacks, we deduce that the étale map $Y' \rightarrow Y$ is surjective. Since $Y=\Spa(C,C^+)$ is strictly totally disconnected, the map $Y' \rightarrow Y$ thus admits a section, as required. This concludes the proof.
\end{itemize}
\end{proof}

The following lemma was used in the proof above.
\begin{lemma}\label{lemma: maps from inverse limit eventually factor through open subspace}
    Let $Z= \varprojlim_{i \in I} Z_i$ be an inverse limit of spatial diamonds with qcqs transition maps. Let $T$ be a locally spatial diamond and let $U\sub T$ be an open subdiamond. Let $t\in T(Z_{i_0})$ for some $i_0 \in I$ and assume that the composition $Z\rightarrow Z_{i_0} \xrightarrow{t} T$ factors through $U \sub T$. Then already the composition $Z_i \rightarrow Z_{i_0} \xrightarrow{t} T$ has image in $U$ for $i\gg i_0$. 
\end{lemma}
\begin{proof}
    Let us take an open cover $T=\bigcup_j T_j$ by open spatial subdiamonds and $Z_{i_0} = \bigcup_k Z_{i_0,k}$ an open cover by spatial subdiamonds subdividing the preimage of the $T_j$. By quasi-compactness of $Z_{i_0}$, we may assume that there are finitely many $k$. Then letting $Z_{i,k},Z_k$ denote the inverse image of $Z_{i_0,k}$ in $Z_i, Z$, we obtain again inverse limits $Z_k = \varprojlim_{i \geq i_0} Z_{i,k}$. Replacing $Z$ by $Z_k$, we may assume that $T$ is spatial. Let us take an open cover $U= \bigcup_{j} U_{j}$ by spatial subdiamonds. Again by quasi-compactness, finitely many $U_j$ cover the image of $Z \rightarrow U$. Hence we may assume that $U$ is quasi-compact and hence spatial. For $i\geq i_0$, the preimage $V_i \sub Z_i$ of $U$ under the composition $Z_i \rightarrow Z_{i_0} \xrightarrow{t} T$ are quasi-compact and $\varprojlim_{i\geq i_0} V_i = Z$. By \cite[\href{https://stacks.math.columbia.edu/tag/0A2W}{Tag 0A2W}]{stacks-project}, it follows that $V_i=Z_i$ for $i\gg i_0$, as required.
\end{proof}

\subsection{The diamantine higher direct images}\label{subsection: Definition of diamantine HDI}
In this section, we define the diamantine higher direct images and study their basic properties. Our definitions and choices for the presentation are heavily influenced by \cite{heuer2023diamantine}.\\ 

Fix a non-archimedean field $(K,K^+)$ over $\Q_p$. Let $\pi\colon X \rightarrow S$ be a smooth morphism of seminormal rigid spaces over $K$. It induces maps of sites
\[ \pi_{\Et}^{\diamondsuit}\colon \LSD_{X,\et} \rightarrow \Perf_{S,\et}\, , \quad \pi_{v}^{\diamondsuit}\colon \LSD_{X,v} \rightarrow \Perf_{S,v}.\]
\begin{definition}
Let $\Gs$ be a commutative smooth $X$-group and let $\tau$ be either the étale or the $v$-topology. For $n\geq 0$, the $n$-th \textbf{diamantine higher direct image of} $\Gs$ is defined as the abelian sheaf
\[ \BBun_{\Gs,X/S,\tau}^{n,\diamondsuit} \coloneqq R^n\pi_{\tau,*}^{\diamondsuit}\Gs \colon \Perf_{S} \rightarrow \Ab.\]
We will sometimes simplify the notation and write instead $\BBun_{\Gs,\tau}^{n,\diamondsuit}$ or $\BBun_{\Gs,\tau}^{n}$ when the context is clear.
\end{definition}

We also have the following variant for the étale topology. Recall that in Lemma \ref{Lemma: units in B is admissible}, we introduced the site $\Sm_{/S,\et}$ with underlying category all rigid spaces smooth over $S$, equipped with the étale topology. We have a morphism of sites 
\[ \pi_{\Et}^{\rig}\colon \Sm_{/X,\et} \rightarrow \Sm_{/S,\et}.\]
\begin{definition}
    Let $n\geq 0$, we define
\[ \BBun_{\Gs,X/S,\et}^{n, \rig} \coloneqq R^n\pi_{\Et,*}^{\rig}\Gs \colon \Sm_{/S,\et} \rightarrow \Ab.\]
\end{definition}

We may compare the two variants. We denote by
\[ \delta \colon \LSD_{S,\et} \rightarrow \Sm_{/S,\et}\]
the morphism of sites induced by the functor 
\[ \Sm_{/S} \rightarrow \LSD_{S}\, , \quad Y \mapsto Y^{\diamondsuit}.\]
By Proposition \ref{Prop: good spaces closed under smooth maps}, since $S$ is a seminormal rigid space, this functor is fully faithful. We also have a morphism of sites
\[ \iota\colon \LSD_{S,\et} \rightarrow \Perf_{S,\et}.\]
For any sheaf $\Fs$ on $\Sm_{/S,\et}$, we define its diamantification to be
\[ \Fs^{\diamondsuit} \coloneqq \iota_* \delta^* \Fs \colon \Perf_{S,\et} \rightarrow \Ab.\]
This is consistent, since if $\Fs$ is representable by a smooth $S$-space $V$, then $\Fs^{\diamondsuit}$ coincides with $V^{\diamondsuit}$. 
We have a commutative diagram
\[\begin{tikzcd}
	{\LSD_{X,\et}} && {\Sm_{/X,\et}} \\
	{\LSD_{S,\et}} && {\Sm_{/S,\et}.}
	\arrow["\delta", from=1-1, to=1-3]
	\arrow[from=1-1, to=2-1]
	\arrow["{\pi_{\Et}^{\rig}}", from=1-3, to=2-3]
	\arrow["\delta", from=2-1, to=2-3]
\end{tikzcd}\]
It induces base-change morphisms, for any abelian sheaf $\Fs$ on $\LSD_{X,\et}$
\begin{align}\label{diamantification and higher pushforward}
    (R^n\pi_{\Et,*}^{\rig}\Fs)^{\diamondsuit} \rightarrow R^n\pi_{\Et,*}^{\diamondsuit}\Fs.
\end{align}

We now summarize the main properties satisfied by the diamantine higher direct images. This generalizes \cite[Thm. 1.1]{heuer2023diamantine}, which is the case $\Gs=\G_m$ and $n=1$.
\begin{proposition}\label{Prop: main properties of diamantine higher direct images}
    Let $\pi\colon X \rightarrow S$ be a proper smooth morphism of seminormal rigid spaces and let $\Gs$ be an admissible locally $p$-divisible $X$-group. Assume either that $S=\Spa(K,K^+)$ or that $\ga$ arises via pullback from a vector bundle on $S$. 
    \begin{enumerate}
        \item For $\tau \in \{\et,v\}$, the sheaf $\BBun_{\ga,\tau}^{n,\diamondsuit}$ is a $\tau$-vector bundle on $S$. If $S=\Spa(K,K^+)$ and $K$ is perfectoid, we further have
        \[ \BBun_{\ga,\tau}^{n,\diamondsuit} = H_{\tau}^n(X,\ga)\otimes \G_a.\]
        \item For each $1 \leq m \leq \infty$, we have
        \[ \BBun_{\Gs[p^m],v}^{n,\diamondsuit} = \BBun_{\Gs[p^m],\et}^{n,\diamondsuit} = \nu^* R^n\pi_{\et,*}\Gs[p^m]. \]
        and this sheaf is ind-Zariski-constructible. If $S=\Spa(K,K^+)$ or $[p]\colon \widehat{\Gs} \rightarrow \widehat{\Gs}$ is finite étale surjective, then $\BBun_{\Gs[p^m],\et}^{n,\diamondsuit}$ is representable by an étale group over $S$.
        \item For $\tau \in \{\et,v\}$, we have a short exact sequence of sheaves on $\Perf_{S,\tau}$
\begin{equation}\label{geometric cohomological log exact sequence}\begin{tikzcd}
	0 & {\BBun_{\Gs[p^{\infty}],\tau}^{n,\diamondsuit}} & {\BBun_{\widehat{\Gs},\tau}^{n,\diamondsuit}} & {\BBun_{\ga,\tau}^{n,\diamondsuit}} & {0.}
	\arrow[from=1-1, to=1-2]
	\arrow[from=1-2, to=1-3]
	\arrow["{\log_*}", from=1-3, to=1-4]
	\arrow[from=1-4, to=1-5]
\end{tikzcd}\end{equation}
    
        \item The natural map (\ref{diamantification and higher pushforward}) yields an isomorphism
    \[ (\BBun_{\Gs,\et}^{n,\rig})^{\diamondsuit} \xrightarrow{\cong} \BBun_{\Gs,\et}^{n,\diamondsuit}.\]
    In particular, $\BBun_{\Gs,\et}^{n,\rig}$ is representable by a smooth $S$-group $\Hs$ if and only if $\BBun_{\Gs,\et}^{n,\diamondsuit}$ coincides with $\Hs^{\diamondsuit}$.
    \end{enumerate}
\end{proposition}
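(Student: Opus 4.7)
The plan is to prove (2), (1), (4), and (3) in that order, the last resting on the previous three via the long exact sequence of the logarithm. The main obstacle is part (1) in the relative case over a general smooth $S$: we need that $R^n\pi_{v,*}\G_a$ is truly a $v$-vector bundle on $S$, which is not formal and draws on the structural results of Heuer on the relative Hodge--Tate spectral sequence. Over $\Spa(K,K^+)$ this simplifies to a standard constancy argument on $\Perf_K$.

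For part (2), admissibility of $\Gs$ means that $\Gs[p^m]$ arises via $\nu^*$ from a Zariski-constructible étale sheaf; in particular it is étale over $X$ and so satisfies the approximation property by Example~\ref{Ex: sheaves with the apporixation property}. Proposition~\ref{Prop: main known results about approximating sheaves} then gives $R\nu_*\Gs[p^m] = \Gs[p^m]$, so the $\et$- and $v$-diamantine higher direct images coincide. Theorem~\ref{Thm: properties of zariski-constructible sheaves} provides that $R^n\pi_{\et,*}\Gs[p^m]$ is Zariski-constructible on $S_{\et}$ and, via proper base change, identifies the diamantine version with $\nu^*R^n\pi_{\et,*}\Gs[p^m]$; representability as a finite étale group in the two special cases is then immediate. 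For part (1), write $\Lie(\Gs) = \pi^*F$ with $F$ a vector bundle on $S$. The projection formula gives $R^n\pi_{\tau,*}\ga = F \otimes_{\Os_S} R^n\pi_{\tau,*}\G_a$, so it suffices to show that $R^n\pi_{\tau,*}\G_a$ is a $\tau$-vector bundle on $S$. For $\tau = \et$ this is the standard coherence/flat base change for proper smooth rigid morphisms, and for $\tau = v$ it is extracted from the $E_2$-degeneration of Heuer's relative Hodge--Tate spectral sequence (\ref{eq: relative HT sseq of Heuer}), whose graded pieces are pullbacks of coherent sheaves. When $S = \Spa(K,K^+)$, proper base change identifies the diamantine direct image with $H^n_{\tau}(X, \Lie(\Gs)) \otimes_K \G_a$.

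Part (4) compares rigid and diamantine sheafifications of the same presheaf $Y \mapsto H^n_{\et}(X \times_S Y, \Gs)$. On an affinoid perfectoid $Y \approx \varprojlim_i Y_i$ with $Y_i$ smooth over $S$, proper base change combined with Lemma~\ref{diamond and étale site of strong tilde timit} yields $H^n_{\et}(X \times_S Y, \Gs) = \varinjlim_i H^n_{\et}(X \times_S Y_i, \Gs)$, which matches $\iota_* \delta^*$ applied to $\BBun_{\Gs,\et}^{n,\rig}$ by definition.

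For part (3), I apply $R\pi_{\tau,*}^{\diamondsuit}$ to the logarithm sequence (\ref{logarithm exact sequence}) to obtain the long exact sequence
\[
\BBun_{\ga,\tau}^{n-1,\diamondsuit} \xrightarrow{\delta_n} \BBun_{\Gs[p^{\infty}],\tau}^{n,\diamondsuit} \to \BBun_{\widehat{\Gs},\tau}^{n,\diamondsuit} \to \BBun_{\ga,\tau}^{n,\diamondsuit} \xrightarrow{\delta_{n+1}} \BBun_{\Gs[p^{\infty}],\tau}^{n+1,\diamondsuit},
\]
where $\BBun_{\Gs[p^{\infty}],\tau}^{n,\diamondsuit} = \varinjlim_m \BBun_{\Gs[p^m],\tau}^{n,\diamondsuit}$ by compatibility of higher direct images with filtered colimits of torsion sheaves. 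By (1) the sheaves $\BBun_{\ga,\tau}^{*,\diamondsuit}$ are vector bundles, hence uniquely $p$-divisible; by (2) the sheaves $\BBun_{\Gs[p^{\infty}],\tau}^{*,\diamondsuit}$ are $p^{\infty}$-torsion. Any homomorphism between such sheaves vanishes, so $\delta_n$ and $\delta_{n+1}$ are zero, giving the required short exact sequence.
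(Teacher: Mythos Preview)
Your argument for (3) via the $p$-divisibility/$p^{\infty}$-torsion dichotomy is correct and arguably cleaner than the paper's route, which instead checks vanishing of $\delta$ on geometric fibres by playing off connectedness of $\G_a^{\oplus r}$ against local constancy of $R^n\pi_{\et,*}\Gs[p^{\infty}]$. Both arguments work; yours avoids the reduction to fibres.

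However, your proof of (4) has a genuine gap. You assert that for an affinoid perfectoid $Y \approx \varprojlim_i Y_i$ one has
\[
H^n_{\et}(X \times_S Y, \Gs) = \varinjlim_i H^n_{\et}(X \times_S Y_i, \Gs),
\]
invoking proper base change and Lemma~\ref{diamond and étale site of strong tilde timit}. But Theorem~\ref{Thm: properties of zariski-constructible sheaves} applies only to Zariski-constructible (hence torsion) sheaves, not to $\Gs$. More fundamentally, the continuity identity above is exactly the content of Proposition~\ref{properties of approximating sheaves}, which \emph{requires the approximation property}, and $\Gs$ does not have it in general: already for $\Gs = \G_a$ or $\G_m$ the $n=0$ case fails, since $\Os(U)$ (resp.\ $\Os(U)^{\times}$) is not the colimit of $\Os(U_i)$ (resp.\ $\Os(U_i)^{\times}$) over approximants. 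Equivalently, under the 2-limit identification of étale sites, the sheaf one gets from the colimit is the abstract pullback $q_i^{-1}\Gs$, which differs from the representable sheaf $\Gs$ precisely when the approximation property fails.

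The paper's proof of (4) proceeds by d\'evissage: using the exact sequences $0 \to \widehat{\Gs} \to \Gs \to \cj{\Gs} \to 0$ and $0 \to \Gs[p^{\infty}] \to \widehat{\Gs} \to \ga \to 0$ together with the 5-lemma, it reduces to the three pieces $\cj{\Gs}$, $\Gs[p^{\infty}]$, and $\ga$. The first two have the approximation property (by admissibility of $\Gs$ and \'etaleness respectively), so Corollary~\ref{higher pushforward of approximating sheaves are again approximating}.(3) applies to them; the piece $\ga$ is handled directly by the explicit base-change results for $R^n\pi_*\Os_X$ used in (1). Your argument for (2) has a milder version of the same issue: the passage from rigid proper base change to the diamantine statement over perfectoid test objects again needs the approximation machinery of Corollary~\ref{higher pushforward of approximating sheaves are again approximating}, not just Theorem~\ref{Thm: properties of zariski-constructible sheaves} alone.
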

\begin{remark}
    Without further assumptions in $(2)$, the sheaf $\BBun_{\Gs[p^{m}],\et}^{n}$ need a priori not be representable by an étale $S$-group. In general, one can show that a Zariski-constructible sheaf $\Fs$ on $S_{\et}$ is represented by a diamond, using the equivalence of categories between constructible sheaves over a qcqs algebraic variety $Y$ and algebraic spaces qcqs over $Y$ \cite[Exposé IX, Prop. 2.7]{SGA4}. However, this diamond need not be locally separated over $S$ and thus may not come from an adic space, as can be seen by taking $\Fs$ to be the skyscraper sheaf at the origin of the closed unit ball $S=\B_K^1$.
\end{remark}

The main ingredient for the proof of Proposition \ref{Prop: main properties of diamantine higher direct images} is the following result, generalizing \cite[Prop. 3.2]{heuer2023diamantine}.
\begin{proposition}\label{properties of approximating sheaves}
    Let $\pi\colon X \rightarrow S$ be a qcqs smooth map of good adic spaces over $\Q_p$. Let $\Fs$ be a sheaf of sets (resp. of groups, resp. of abelian groups) on $\Adic_{X,\et}$ with the approximation property. Let $Y\approx \varprojlim_i Y_i$ be a tilde-limit of good affinoid adic spaces over $S$. Let $i_0 \in I$ and let $U_{i_0} \rightarrow X \times_S Y_{i_0}$ be a qcqs étale map. For all $i\geq i_0$, form the products $U_i = U_{i_0} \times_{Y_{i_0}}Y_i$ and $U = U_{i_0}\times_{Y_{i_0}}Y$. Then $U \sim \underset{i\geq i_0}{\varprojlim}\, U_i$ and
        \[ H_{\et}^n(U,\Fs) = \underset{i\geq i_0}{\varinjlim}\,  H_{\et}^n(U_i,\Fs),\]
    for $n=0$ (resp. for $n=0,1$, resp. for any $n\geq 0$).
\end{proposition}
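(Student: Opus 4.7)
The proof splits into two stages: first establishing $U \sim \varprojlim_{i\geq i_0} U_i$, and then deducing the cohomology identity from it together with the approximation property of $\Fs$.

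\textbf{Stage 1: the tilde-limit.} The homeomorphism $|U| \cong \varprojlim |U_i|$ is immediate from $|Y| \cong \varprojlim |Y_i|$ (which holds since $Y\approx \varprojlim Y_i$) combined with $|U_i| = |U_{i_0}|\times_{|Y_{i_0}|}|Y_i|$ for qcqs adic spaces. For the density condition, I would cover $U_{i_0}$ by finitely many affinoid opens, each étale over an affinoid rational subset of $X\times_S Y_{i_0}$. Base-changing yields compatible affinoid covers of $U_i$ and of $U$, and on each such affinoid tilde-limit $V \approx \varprojlim_i V_i$, the density of $\varinjlim_i \Os(V_i) \to \Os(V)$ is inherited from that of $\varinjlim_i \Os(Y_i) \to \Os(Y)$ via finite presentation of étale maps between affinoid sousperfectoids.

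\textbf{Stage 2: the cohomology identity.} The crucial input is Lemma \ref{diamond and étale site of strong tilde timit}.(2), giving $U_{\et,\qcqs} \cong \tworlim_i U_{i,\et,\qcqs}$. For $n=0$: cover $U$ by finitely many affinoid sousperfectoids pulled back from affinoid opens of $U_{i_0}$. These, together with their pairwise intersections, are affinoid tilde-limits of the analogous pieces in $U_i$, so Lemma \ref{lemma: strenghtening of approximation property} applies termwise to the Čech complex computing global sections. Exactness of filtered colimits then gives $\Fs(U) = \varinjlim_i \Fs(U_i)$.

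For general $n\geq 1$, I would proceed via Čech cohomology. Every class in $H^n_{\et}(U,\Fs)$ is represented by a Čech cocycle on some étale hypercover $V_{\bullet} \to U$ by qcqs étale maps. By the $2$-colimit description of the sites, this hypercover, together with its face and degeneracy maps, descends to a hypercover of some $U_i$, with each level arising via base change from a qcqs étale object over $U_i$ and hence being an affinoid tilde-limit of the corresponding pieces in $U_j$ for $j\geq i$. The cocycle and coboundary conditions descend levelwise by the $n=0$ case applied to each level and its overlaps, and exactness of filtered colimits then yields both surjectivity and injectivity of $\varinjlim_i H^n_{\et}(U_i,\Fs) \to H^n_{\et}(U,\Fs)$.

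The main obstacle is the bookkeeping in Stage 2 for $n\geq 1$: refining arbitrary étale hypercovers of $U$ to ones pulled back from a single $U_i$ and checking that face maps and cocycle relations descend levelwise up to enlarging the index. This is ultimately a consequence of the $2$-colimit description of the étale sites, but requires care. An abstract alternative is to invoke the general principle that sheaf cohomology commutes with cofiltered limits of coherent sites with qcqs transition morphisms, applied to the compatible system $(U_{i,\et,\qcqs})_i$, where the approximation property of $\Fs$ is exactly what is needed to identify $\Fs$ with the pullback of a compatible system along the transition morphisms.
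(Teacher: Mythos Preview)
Your proposal is correct, and in fact the ``abstract alternative'' you sketch at the end is exactly the route the paper takes. The paper's argument is: establish the $n=0$ case only for $U_{i_0}$ standard-\'etale over $X'\times_{S'}Y_{i_0}$ with $X'\to S'$ standard-smooth (this is where Lemma \ref{stability property of strong tilde-limit bis} and Lemma \ref{lemma: strenghtening of approximation property} are invoked, yielding $U\approx\varprojlim U_i$ and $\Fs(U)=\varinjlim\Fs(U_i)$); since such maps form a basis of $(X\times_S Y)_{\et,\qcqs}\cong\tworlim_i(X\times_S Y_i)_{\et,\qcqs}$, this identifies $\Fs$ with the colimit of the pullbacks of its restrictions along the transition maps; the cohomological statement for all $n$ and all qcqs \'etale $U$ then follows in one stroke from \cite[\href{https://stacks.math.columbia.edu/tag/09YP}{Tag 09YP}]{stacks-project}.

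Your main approach via \v{C}ech hypercovers would also work, but the paper's route is cleaner precisely because it sidesteps the descent bookkeeping you flag as the main obstacle: once $\Fs$ is recognised as the colimit sheaf on the $2$-colimit site, the general machinery of \cite[\href{https://stacks.math.columbia.edu/tag/09YP}{Tag 09YP}]{stacks-project} handles all $n$ uniformly without ever touching hypercovers. Note also that the paper only needs $U\approx\varprojlim U_i$ for the basis objects (where it follows directly from Lemma \ref{stability property of strong tilde-limit bis}), rather than the general $U\sim\varprojlim U_i$ you establish in Stage~1; the latter is stated in the Proposition but is not actually used in the paper's argument for the cohomology identity.
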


We will need the following lemma, which says that tilde-limits behave well under certain pullbacks.
\begin{lemma}\label{stability property of strong tilde-limit bis}
Let $S$ be a good affinoid adic space over $\Q_p$ and let $Y \approx \varprojlim_{i\in I} Y_i$ be a tilde-limit of good affinoid adic spaces over $S$.
    \begin{enumerate}
        \item Let $X \rightarrow S$ be a standard-smooth morphism from a good affinoid adic space $X$. Then we have a tilde-limit
    \[ X\times_S Y \approx \varprojlim_i X\times_S Y_i.\]
    \item Let $U \rightarrow Y$ be a standard-étale morphism, arising via pullback from $U_{i_0} \rightarrow Y_{i_0}$ for some $i_0\in I $. Then if $U_i$ denotes the pullback of $U_{i_0}$ along $Y_i \rightarrow Y_{i_0}$ for $i\geq i_0$, we have
    \[ U \approx \varprojlim_{i\geq i_0} U_i.\]
    \end{enumerate}
\end{lemma}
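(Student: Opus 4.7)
My approach is to verify the two defining conditions of a strong tilde-limit $\approx$ — homeomorphism of underlying topological spaces and density of the colimit of global sections — directly from the explicit algebraic presentations of standard-smooth and standard-étale morphisms. Part (2) is morally the case $n=0$, $m=1$ of part (1), so I argue part (1) first. Writing $S = \Spa(A)$, $Y_i = \Spa(R_i)$, $Y = \Spa(R)$, the hypothesis $Y \approx \varprojlim_i Y_i$ says that $\lvert Y \rvert = \varprojlim \lvert Y_i \rvert$ and that $\varinjlim_i R_i \to R$ has dense image. A standard-smooth $X = \Spa(B) \to S$ can be presented by $B = A\langle T_1,\dots,T_n,U_1,\dots,U_m\rangle/(\vec f)$ with invertible Jacobian $\det(\partial f_i/\partial U_j)$, and base-changing gives
\[ X \times_S Y_i = \Spa\bigl(R_i\langle T,U\rangle/(\vec f)\bigr), \qquad X \times_S Y = \Spa\bigl(R\langle T,U\rangle/(\vec f)\bigr), \]
which remain sousperfectoid by \cite[Prop. 6.3.3]{SW20}. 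For part (2), standard-étale $U \to Y$ is presented by $R\langle T\rangle/(g)$ with $g$ monic and $g'$ invertible modulo $g$, and the hypothesis that $U$ is pulled back from $U_{i_0}$ means $g$ may be chosen in $R_{i_0}\langle T\rangle$, so the $U_i$ admit completely analogous presentations.

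For the topological condition, I would pass to diamonds: by Lemma \ref{diamond and étale site of strong tilde timit}.(1) applied to the underlying weak tilde-limit $Y \sim \varprojlim_i Y_i$, one has $Y^{\diamondsuit} = \varprojlim_i Y_i^{\diamondsuit}$ in $v$-sheaves. Since the diamond functor is fully faithful on sousperfectoid spaces (as recalled in the Notations) and commutes with fibre products, one obtains $(X \times_S Y)^{\diamondsuit} = \varprojlim_i (X \times_S Y_i)^{\diamondsuit}$, and the required homeomorphism of underlying spaces $\lvert X \times_S Y \rvert \cong \varprojlim_i \lvert X \times_S Y_i \rvert$ follows from \cite[Prop. 11.23]{scholze2022etale}. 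The same reasoning gives the topological condition in part (2).

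For the density condition — the essential content — I argue by explicit approximation in the Gauss norm. Given a section $h \in R\langle T, U\rangle/(\vec f)$, I lift it to $R\langle T, U\rangle$ and approximate it to within $\varepsilon$ by a polynomial of bounded total degree $d$ with coefficients in $R$. Each of the finitely many coefficients lies in $R$ and can be approximated to within $\varepsilon$ by an element of some $R_i$, taking $i$ large enough to handle all coefficients simultaneously. The resulting polynomial lies in $R_i\langle T, U\rangle/(\vec f) = \mathcal{O}(X \times_S Y_i)$, and its image in $R\langle T, U\rangle/(\vec f)$ approximates $h$ to within $2\varepsilon$, since the quotient map is a continuous surjection of Banach algebras and the estimate survives passing to the quotient norm. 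Part (2) is identical with the single equation $g$ in place of $(\vec f)$.

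The main obstacle is purely in Step 3: carefully confirming that the Gauss norm on $R\langle T, U\rangle$ and the quotient norm modulo $(\vec f)$ interact well with the density data coming from $Y \approx \varprojlim Y_i$, and that base-change preserves the relevant Banach-algebra structure. This is routine book-keeping rather than a genuine difficulty; an alternative, more conceptual route would be to factor the standard-smooth $X \to S$ as an étale map composed with the projection of a relative polydisc, handle the two factors separately (polydiscs by explicit computation, étale maps by a direct variant of part (2)), and combine using that tilde-limits compose well with themselves.
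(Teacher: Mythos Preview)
Your proposal is correct and follows essentially the same strategy as the paper's proof: the topological condition is obtained by passing to diamonds (noting that diamondification commutes with fibre products and that $Y^{\diamondsuit} = \varprojlim_i Y_i^{\diamondsuit}$), and the density condition is checked on rings. The only stylistic differences are that the paper phrases the density argument more abstractly via completed tensor products---observing that $\Os(X)\otimes_{\Os(S)}\varinjlim_i \Os(Y_i)$ has dense image in $\Os(X)\,\widehat{\otimes}_{\Os(S)}\,\Os(Y)$, which sidesteps any discussion of Gauss or quotient norms---and that the paper characterizes standard-smooth maps as compositions of rational localizations, finite \'etale maps, and adjunction of Tate variables rather than via the Jacobian presentation. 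One minor slip: the reference for $\lvert \varprojlim_i Z_i\rvert = \varprojlim_i \lvert Z_i\rvert$ on the diamond side should be \cite[Lemma 11.22]{scholze2022etale}, not Proposition 11.23 (which concerns \'etale sites).
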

\begin{proof}
The second point is \cite[Lemma 3.13]{heuer2023diamantine}. In our case, it also follows from the first point if we redefine $S\coloneqq Y_{i_0}$ and $X\coloneqq U_{i_0}$. Let us now prove the first point. We claim that under our assumptions, the fiber products of diamonds $X\times_S Y_i$ are representable by good affinoid adic spaces, namely the adic spectrum of
\[ (\Os(X),\Os^+(X))\,  \underset{(\Os(S),\Os^+(S))}{\widehat{\otimes}}(\Os(Y_i),\Os^+(Y_i)),\]
and similarly for $Y$. By assumption, the map $\Os(S) \rightarrow \Os(X)$ is a composition of rational open immersions, finite étale maps and adjunction of Tate variables. Then so is $\Os(Y_i) \rightarrow \Os(X)\, \widehat{\otimes}_{\Os(S)} \,\Os(Y_i)$. It follows from Proposition \ref{Prop: good spaces closed under smooth maps} that the completed tensor product of Huber pairs above yields a good adic space, which then represents the diamond $X\times_S Y_i$.

Next, we want to show that the image of the map
\[ \varinjlim_i \Os(X)\, \widehat{\otimes}_{\Os(S)} \,\Os(Y_i) \rightarrow \Os(X) \,\widehat{\otimes}_{\Os(S)}\, \Os(Y)\]
is dense in its target. It is enough to show that the image of
\[ \varinjlim_i \Os(X) \otimes_{\Os(S)} \Os(Y_i) = \Os(X)\otimes_{\Os(S)}\varinjlim_i \Os(Y_i)\]
is dense. This image is immediately seen to be dense in $\Os(X)\otimes_{\Os(S)} \Os(Y)$, which concludes.

It remains to show the condition on topological spaces. The diamond functor commutes with fiber products and, on the level of sheaves, fiber products commute with inverse limits. It follows that
\[ (X\times_S Y)^{\diamondsuit} = \varprojlim_i (X\times_S Y_i)^{\diamondsuit}.\]
The topological spaces of an analytic adic space and its diamondification are identified \cite[Lemma 15.6]{scholze2022etale}. On the level of diamonds, taking underlying topological spaces commutes with the inverse limit \cite[Lemma 11.22]{scholze2022etale}, which concludes.
\end{proof}

\begin{proof}[Proof of Proposition \ref{properties of approximating sheaves}]
We start with the case $n=0$.
\begin{claim}
    In the situation of Proposition \ref{properties of approximating sheaves}, assume furthermore that $U_{i_0}$ is standard-étale over $X'\times_{S'} Y_{i_0}$ where $X' \sub X$ and $S'\sub S$ are affinoid open subspaces such that all $Y,Y_i$ are defined over $S'$ and the restriction $\pi\colon X' \rightarrow S'$ is standard-smooth. Then we have an isomorphism
        \[ \underset{i\geq i_0}{\varinjlim}\, \Fs(U_i) \xrightarrow{\cong} \Fs(U).\]
\end{claim}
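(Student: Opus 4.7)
The approach is to reduce the claim to a tilde-limit identity and then invoke the strengthened approximation property. Concretely, the plan is to first prove
\[ U \approx \varprojlim_{i \geq i_0} U_i \]
as affinoid sousperfectoid spaces over $X$, and then apply Lemma~\ref{lemma: strenghtening of approximation property}, which upgrades the approximation property from affinoid perfectoid tilde-limits to arbitrary affinoid sousperfectoid tilde-limits, to conclude $\varinjlim_{i \geq i_0} \Fs(U_i) \xrightarrow{\cong} \Fs(U)$.

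To establish the tilde-limit, I would combine the two parts of Lemma~\ref{stability property of strong tilde-limit bis}. Since $X' \to S'$ is standard-smooth by hypothesis and the affinoid open immersion $S' \sub S$ is a rational localization, the composition $X' \to S$ is again standard-smooth. Applying Lemma~\ref{stability property of strong tilde-limit bis}.(1) to $Y \approx \varprojlim_i Y_i$ over $S$ then produces
\[ X' \times_S Y \approx \varprojlim_i X' \times_S Y_i. \]
Because $X' \to S$ factors through $S'$, one may identify $X' \times_S (-) = X' \times_{S'} (-)$, so this is a tilde-limit $X' \times_{S'} Y \approx \varprojlim_i X' \times_{S'} Y_i$. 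Next, the map $U = U_{i_0} \times_{Y_{i_0}} Y \to X' \times_{S'} Y$ is the pullback of the standard-étale morphism $U_{i_0} \to X' \times_{S'} Y_{i_0}$ along $X' \times_{S'} Y \to X' \times_{S'} Y_{i_0}$, hence it is itself standard-étale and arises via pullback from $U_{i_0}$. Lemma~\ref{stability property of strong tilde-limit bis}.(2) then gives $U \approx \varprojlim_{i\geq i_0} U_i$. Each $U_i$ is an affinoid sousperfectoid space equipped with a structural map $U_i \to X' \times_{S'} Y_i \to X' \sub X$ over $X$, so Lemma~\ref{lemma: strenghtening of approximation property} applies and yields the claim.

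The main delicate point in this outline is that $U_{i_0}$ is étale not over $Y_{i_0}$ directly but over the fibre product $X' \times_{S'} Y_{i_0}$, which is why one cannot simply feed the original tilde-limit $Y \approx \varprojlim Y_i$ into Lemma~\ref{stability property of strong tilde-limit bis}.(2). Producing the intermediate smooth tilde-limit $X' \times_{S'} Y \approx \varprojlim X' \times_{S'} Y_i$ via part~(1) is exactly what permits the subsequent étale pullback argument in part~(2) to go through. Once this two-step reduction is in place, the rest is a routine identification of fibre products.
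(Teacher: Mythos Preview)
Your proposal is correct and follows exactly the same two-step strategy as the paper: invoke Lemma~\ref{stability property of strong tilde-limit bis}.(1)--(2) to obtain the tilde-limit $U \approx \varprojlim_{i\geq i_0} U_i$, then conclude via Lemma~\ref{lemma: strenghtening of approximation property}. Your elaboration on how parts~(1) and~(2) combine---first pulling back the smooth map $X'\to S'$ to produce the intermediate tilde-limit $X'\times_{S'} Y \approx \varprojlim_i X'\times_{S'} Y_i$, then pulling back the standard-\'etale map $U_{i_0}\to X'\times_{S'} Y_{i_0}$---is exactly what the paper's terse reference is unpacking.
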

\begin{proof} By Lemma \ref{stability property of strong tilde-limit bis}(1)-(2), we find that
    \[ U \approx \varprojlim_{i\geq i_0} U_i.\]
    The claim now follows from Proposition \ref{Prop: main known results about approximating sheaves}(2).
\end{proof}
    Since standard-étale maps form a basis of the étale site of $X\times_S Y$, it follows that we have an isomorphism of sheaves
    \[ \Fs = \varinjlim_{i\geq i_0} q_i^{*}q_{i,*}\Fs\]
     on the site $(X\times_S Y)_{\et,\qcqs} \cong \tworlim_i (X\times_S Y_i)_{\et,\qcqs}$, where $q_i\colon X\times_S Y \rightarrow X\times_S Y_i$ denotes the projection and we denote the restriction of $\Fs$ to the site $(X \times_S Y)_{\et,\qcqs}$ by the same symbol. The statement now follows from \cite[\href{https://stacks.math.columbia.edu/tag/09YP}{Tag 09YP}]{stacks-project} in the abelian case, and the non-abelian case is deduced similarly. 
\end{proof}

\begin{cor}\label{higher pushforward of approximating sheaves are again approximating}
    Let $X \rightarrow S$ be a smooth qcqs morphism of good adic spaces over $\Q_p$. Let $\Fs$ be a sheaf of sets (resp. groups, resp. abelian groups) on $\Adic_{X,\et}$ with the approximation property and let $n=0$ (resp. $n=0,1$, resp. $n\geq 0$).
    \begin{enumerate}
        \item The sheaf 
    \[ R^n\pi_{\Et,*}\Fs \colon \Adic_{S,\et} \rightarrow \Ab \]
    has the approximation property.
    \item We have
    \[ R^n\pi_{\Et,*}\Fs \cong R^n\pi_{v,*}\Fs. \]
    \item The natural map (\ref{diamantification and higher pushforward}) is an isomorphism
    \[(R^n\pi_{\Et,*}^{\rig}\Fs)^{\diamondsuit} \xrightarrow{\cong} R^n\pi_{\Et,*}^{\diamondsuit}\Fs.\]
    \end{enumerate}
    \end{cor}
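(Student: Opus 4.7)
The plan is to deduce the three parts from Proposition \ref{properties of approximating sheaves}, cascading them in order so that (2) uses (1) and (3) uses both.

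First, for (1), let $Y \approx \varprojlim_i Y_i$ be an affinoid perfectoid tilde-limit of affinoid sousperfectoid spaces over $S$. The sheaf $R^n\pi_{\Et,*}\Fs$ is the étale sheafification on $\Sous_{S,\et}$ of the presheaf $\tilde F^n \colon Z \mapsto H^n_{\et}(X\times_S Z, \Fs)$. Proposition \ref{properties of approximating sheaves}, applied with $U_{i_0} = X\times_S Y_{i_0}$ and reducing to qcqs standard-étale pieces via an étale cover of $X$ by affinoid opens that are étale over standard-smooth subspaces of $S$, already yields $\tilde F^n(Y) = \varinjlim_i \tilde F^n(Y_i)$. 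To promote this from the presheaf to its sheafification, I would use that every qcqs étale cover of $Y$ refines to one pulled back from some $Y_i$ (via Lemma \ref{diamond and étale site of strong tilde timit} and the equivalence $Y_{\et,\qcqs} \cong \tworlim Y_{i,\et,\qcqs}$), so matching families on $Y$ and their Čech cohomology arise, up to refinement, as colimits of their counterparts on the $Y_i$.

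Next, for (2), I would use the commutative square of sites
\[\begin{tikzcd}
	{X_v} & {S_v} \\
	{X_{\Et}} & {S_{\Et},}
	\arrow["{\pi_v}", from=1-1, to=1-2]
	\arrow["{\nu_X}"', from=1-1, to=2-1]
	\arrow["{\nu_S}", from=1-2, to=2-2]
	\arrow["{\pi_{\Et}}"', from=2-1, to=2-2]
\end{tikzcd}\]
which yields $R\pi_{\Et,*}(R\nu_{X,*}\Fs) = R\nu_{S,*}(R\pi_{v,*}\Fs)$. Since $\Fs$ has the approximation property, Proposition \ref{Prop: main known results about approximating sheaves} gives $R\nu_{X,*}\Fs = \Fs$, so the left side reduces to $R\pi_{\Et,*}\Fs$. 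By part (1), each $R^q\pi_{\Et,*}\Fs$ also has the approximation property, hence is $\nu_{S,*}$-acyclic by Proposition \ref{Prop: main known results about approximating sheaves} again. The Leray spectral sequence for $R\nu_{S,*}$ applied to $R\pi_{v,*}\Fs$ therefore collapses to produce the desired isomorphism $R^n\pi_{v,*}\Fs \cong R^n\pi_{\Et,*}\Fs$.

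For (3), the natural base change map \eqref{diamantification and higher pushforward} is induced by the commutative square of sites preceding the corollary. Both sides are sheaves on $\Perf_{S,\et}$. To check the map is an isomorphism, I would evaluate at an affinoid perfectoid $Y \in \Perf_S$, write $Y \approx \varprojlim_i U_i$ with $U_i$ affinoid smooth over $S$ (see \cite[Prop. 3.17]{heuer2023diamantine}), and compute both sides via (1) as filtered colimits. Both colimits evaluate to $H^n_{\et}(X\times_S Y, \Fs)$, giving the required identification.

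The principal technical hurdle is (1): extending the approximation property from the presheaf $\tilde F^n$, where it is an immediate consequence of Proposition \ref{properties of approximating sheaves}, to its étale sheafification $R^n\pi_{\Et,*}\Fs$ on $\Sous_{S,\et}$. Once this sheafification step is handled, (2) and (3) follow by formal manipulations of the Leray spectral sequence and the presentation of affinoid perfectoids as tilde-limits of smooth rigid spaces.
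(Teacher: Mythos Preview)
Your approaches to (1) and (3) match the paper's. For (1), the paper also passes from the presheaf $Z \mapsto H^n_{\et}(X\times_S Z,\Fs)$ to its sheafification via the equivalence $Y_{\et,\qcqs}\cong \tworlim_i Y_{i,\et,\qcqs}$, just as you outline. For (3), the paper likewise writes $Y\approx \varprojlim_i Y_i$ with $Y_i$ smooth affinoid over $S$ and identifies the two presheaves before sheafification using Proposition~\ref{properties of approximating sheaves}.

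Your argument for (2) contains a circularity. The spectral sequence you invoke has $E_2^{pq}=R^p\nu_{S,*}(R^q\pi_{v,*}\Fs)$, so collapsing it to the column $p=0$ requires each $R^q\pi_{v,*}\Fs$ to be $\nu_{S,*}$-acyclic. You only establish this for $R^q\pi_{\Et,*}\Fs$ via (1); but equating $R^q\pi_{v,*}\Fs$ with $R^q\pi_{\Et,*}\Fs$ is precisely the content of (2). The gap is repairable by induction on $n$: assuming (2) for all $q<n$ kills $E_2^{pq}$ for $p>0$, $q<n$, and since every differential out of $E_2^{0,n}$ lands in such a term while every term $E_\infty^{p,n-p}$ with $p>0$ already vanishes, one obtains $\nu_{S,*}(R^n\pi_{v,*}\Fs)\cong R^n\pi_{\Et,*}\Fs$. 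The paper avoids this detour entirely: it observes that $R^n\pi_{v,*}\Fs$ and $R^n\pi_{\Et,*}\Fs$ are respectively the $v$- and \'etale sheafifications of the \emph{same} presheaf $Y\mapsto H^n_v(X\times_S Y,\Fs)=H^n_{\et}(X\times_S Y,\Fs)$ (equality by Proposition~\ref{Prop: main known results about approximating sheaves} applied on $X\times_S Y$), and since the \'etale sheafification is already a $v$-sheaf by (1) together with Proposition~\ref{Prop: main known results about approximating sheaves}, the two sheafifications coincide.
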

    \begin{proof}
        Let $Y\approx \varprojlim_i Y_i$ be an affinoid perfectoid tilde-limit of good affinoid adic spaces over $S$. Let $V\rightarrow Y$ be a qcqs étale map, corresponding to a qcqs étale maps $V_i \rightarrow Y_i$ for $i\gg 0$. By Proposition \ref{properties of approximating sheaves}, we have
        \[ H_{\et}^n(X\times_S V,\Fs) = \varinjlim_{i} H_{\et}^n(X\times_S V_i,\Fs).\]
        Sheafifying on $Y$ yields 
        \[ R^n\pi_{\Et,*}\Fs(Y) = \varinjlim_{i} R^n\pi_{\Et,*}\Fs(Y_i).\]
        This proves that $R^n\pi_{\Et,*}\Fs$ has the approximation property and is, in particular, a $v$-sheaf. Considering now $R^n\pi_{v,*}\Fs$, which is the $v$-sheafification of the presheaf taking $Y \rightarrow S$ to $H_{v}^n(X\times_S Y,\Fs)$. By Proposition \ref{Prop: main known results about approximating sheaves}(3), we have
        \[ H_{v}^n(X\times_S Y,\Fs) = H_{\et}^n(X\times_S Y,\Fs).\]
        We deduce that
        \[ R^n\pi_{v,*}\Fs = R^n\pi_{\Et,*}\Fs.\]
        Hence the first two points are proven. For the last point, we argue as in the proof of \cite[Cor. 3.16]{heuer2023diamantine}. Namely, observe that the two sheaves under consideration are the étale sheafification of the following presheaves. The left-hand side comes from the presheaf
        \[ Y \in \Perf_S \mapsto \underset{Y \rightarrow Y_i}{\varinjlim}H_{\et}^n(X\times_S Y_i,\Fs),\]
        where the colimit runs over all factorization of $Y\rightarrow S$ into an adic space $Y_i$ smooth over $S$. Here we use that $Y_{\et, \qcqs} \cong \tworlim_i Y_{i,\et,\qcqs}$ so that it is enough to sheafify over $Y$. The right-hand side comes from the presheaf
        \[ Y \in \Perf_S \mapsto H_{\et}^n(X\times_S Y,\Fs).\]
        By \cite[Prop. 3.17]{heuer2023diamantine}, we may write
        \[ Y\approx \underset{Y \rightarrow Y_i}{\varprojlim}\, Y_i,\]
        where the limit ranges over all morphisms $Y \rightarrow Y_i$ into affinoid adic spaces smooth over $S$. It now follows from Proposition \ref{properties of approximating sheaves} that these two presheaves are isomorphic.
    \end{proof}

\begin{proof}[Proof of Proposition \ref{Prop: main properties of diamantine higher direct images}]
    \begin{enumerate}
        \item Let us show that
        \[ R^n\pi_{\Et,*}^{\diamondsuit}\ga = (R^n\pi_{\et,*}\ga) \otimes_{\Os_{S_{\et}}} \G_a.\]
        When $S=\Spa(K,K^+)$, it follows from \cite[Cor. 3.10]{heuer2024relative}, which says that for any affinoid perfectoid $Y$ over $K$
        \[ H_{\et}^n(X\times_K Y,\ga) = H_{\et}^n(X,\ga) \otimes_K \Os(Y)\]
        If $K$ is perfectoid, \emph{loc. cit.} also shows that
        \[ H_{v}^n(X\times_K Y,\ga) = H_{v}^n(X,\ga) \otimes_K \Os(Y),\]
        which yields
        \[ R^n\pi_{v,*}^{\diamondsuit}\ga = H_v^n(X,\ga) \otimes_K \G_a.\]
        Assume now that $S$ is arbitrary and that $\ga$ is the pullback of a vector bundle $E$ on $S$. By the projection formula, we may assume that $\ga=\G_a$. It now follows from the fact that $R^n\pi_{\an,*}\Os_X$ is an analytic vector bundle whose formation commutes with base-change along any perfectoid space or rigid space $Y \rightarrow S$ \cite[Thm. 5.7.3]{heuer2024relative}. Note that this proves the $4$th point for $\Gs=\ga$.

    \item By Corollary \ref{higher pushforward of approximating sheaves are again approximating}(2)-(3), we have, for any $1\leq m \leq \infty$
    \[ R^n\pi_{v,*}^{\diamondsuit}\Gs[p^{m}] = R^n\pi_{\Et,*}^{\diamondsuit}\Gs[p^{m}] = (R^n\pi_{\Et,*}^{\rig}\Gs[p^{m}])^{\diamondsuit}.\]
    By proper base change (Theorem \ref{Thm: properties of zariski-constructible sheaves}(2)), the sheaf $R^n\pi_{\Et,*}^{\rig}\Gs[p^{m}]$ arises from the ind-Zariski-constructible sheaf $R^n\pi_{\et,*}\Gs[p^{m}]$ via pullback along $\Sm_{/S,\et} \rightarrow S_{\et}$, which shows the first part of the statement. If $S=\Spa(K,K^+)$, we conclude that the ind-Zariski-constructible sheaf $R^n\pi_{\et,*}\Gs[p^{m}]$ is representable by an étale-locally constant rigid group. Assume now that $[p]\colon \widehat{\Gs} \rightarrow \widehat{\Gs}$ is finite étale surjective. For $m< \infty$, the group $\Gs[p^m] \rightarrow S$ is finite étale, such that $R^n\pi_{\et,*}\Gs[p^{m}]$ is a local system, by \cite[Thm. 10.5.1]{SW20}, and is thus easily seen to be representable by a finite étale $S$-group. By a standard limit argument (see the proof of \cite[Thm. 4.4]{heuer2024relative}), we deduce that $\Mb^n = R^n\pi_{v,*}^{\diamondsuit}T_p\Gs$ is a lisse $\Z_p$-sheaf on $S_v$, where $T_p\Gs = \varprojlim_m \Gs[p^m]$ is the Tate module. It remains to show that $R^n\pi_{\Et,*}^{\diamondsuit}\Gs[p^{\infty}]$ is representable by an étale $S$-group. By writing $\Gs[p^{\infty}] = T_p\Gs[\tfrac{1}{p}]/T_p\Gs$, we obtain a short exact sequence of $v$-sheaves over $S$
\[\begin{tikzcd}
	0 & {\Mb^n[\tfrac{1}{p}]/\Mb^n} & {R^n\pi_{v,*}^{\diamondsuit}\Gs[p^{\infty}]} & {\Mb_{\tor}^{n+1}} & {0,}
	\arrow[from=1-1, to=1-2]
	\arrow[from=1-2, to=1-3]
	\arrow[from=1-3, to=1-4]
	\arrow[from=1-4, to=1-5]
\end{tikzcd}\]
where $\Mb_{\tor}^{n+1}$ is the torsion subsheaf of $\Mb^{n+1}$, a finite locally free abelian sheaf. Hence it is enough to show that $\Mb^n[\tfrac{1}{p}]/\Mb^n$ is representable by a separated étale $S$-group. By \cite[Prop. 10.11.(iv)]{scholze2022etale}, this may be checked $v$-locally. Hence we may assume that $S$ is strictly totally disconnected perfectoid space and $\Mb^n \cong \underline{M}$ for a finitely generated $\Z_p$-module $M$, in which case it is clear.

    \item Applying $R\pi_{\tau,*}^{\diamondsuit}$ to the logarithm exact sequence (\ref{logarithm exact sequence}) yields a long exact sequence of sheaves on $\Perf_{S,\tau}$
\[\begin{tikzcd}
	\ldots & {R^{n}\pi_{\tau,*}^{\diamondsuit}\widehat{\Gs}} & {R^{n}\pi_{\tau,*}^{\diamondsuit}\ga} & {R^{n+1}\pi_{\tau,*}^{\diamondsuit}\Gs[p^{\infty}]} & {R^{n+1}\pi_{\tau,*}^{\diamondsuit}\widehat{\Gs}} & \ldots
	\arrow[from=1-1, to=1-2]
	\arrow[from=1-2, to=1-3]
	\arrow["\delta", from=1-3, to=1-4]
	\arrow[from=1-4, to=1-5]
	\arrow[from=1-5, to=1-6]
\end{tikzcd}\]
We claim that $\delta$ vanishes, for all $n\geq 0$. For this, we use the following argument that we learned in \cite[Prop. 6.1.15]{heuer2024curve}. Assume first that $\tau$ is the étale topology. By the first two points, the source of $\delta$ is a vector bundle while its target comes from pullback from an ind-Zariski-constructible sheaf on $S_{\et}$. Hence, by \cite[Prop. 2.6.1]{huber2013étale}, the vanishing of $\delta$ may be checked on geometric fibers. We may thus assume that $S=\Spa(C,C^+)$ for an algebraically closed field $C$, in which case 
\[ R^{n}\pi_{\Et,*}^{\diamondsuit}\ga \cong \G_a^{\oplus r}, \quad R^{n+1}\pi_{\Et,*}^{\diamondsuit}\Gs[p^{\infty}] = \underline{\Lambda},\]
for a discrete group $\Lambda$. Now there is no nonzero maps of groups from the connected rigid group $\G_a^{\oplus r}$ to a discrete group. This shows that $\delta=0$ as required. When $\tau=v$, the vanishing of $\delta$ can be shown after pulling back along a pro-étale cover by a perfectoid space $\widetilde{S} \rightarrow S$. Then the above argument applies verbatim over $\widetilde{S}$, where $R^{n}\pi_{v,*}^{\diamondsuit}\ga$ is representable by an étale vector bundle, since étale and $v$-vector bundles on a perfectoid space agree \cite[Thm. 3.5.8]{kedlaya2019relative2}.

\item To show that the natural map
    \[(R^{n}\pi_{\Et,*}^{\rig}\Gs)^{\diamondsuit} \rightarrow R^{n}\pi_{\Et,*}^{\diamondsuit}\Gs\]
    is an isomorphism, it suffices, by the $5$-Lemma and Corollary \ref{higher pushforward of approximating sheaves are again approximating}(3) applied to the sheaf $\cj{\Gs} = \Gs/\widehat{\Gs}$, to show the statement with $\Gs$ replaced by $\widehat{\Gs}$. Applying again Corollary \ref{higher pushforward of approximating sheaves are again approximating}(3) to the sheaf $\Gs[p^{\infty}]$, it is enough to show the statement for $\ga$, which we have already seen in the proof of the first point. This completes the proof.
\end{enumerate}
\end{proof}

\subsection{The geometric spectral sequence}
We now introduce the geometric Hodge--Tate spectral sequence. We fix a perfectoid field $(K,K^+)$ over $\Q_p$. Let $\pi \colon X \rightarrow S$ be a smooth morphism of seminormal rigid spaces over $K$. From now on we will lighten the notation and write $\pi_{\tau,*}$ instead of $\pi_{\tau,*}^{\diamondsuit}$ for $\tau \in\{\Et,v\}$. We recall that we denote by $\Smd_{X,\et}$ the category of smoothoid spaces $Y$ (cf. Definition \ref{def: Smoothoid spaces}) together with a morphism $Y\rightarrow X$, equipped with the étale topology. We have natural morphisms of sites fitting in a commutative diagram
\begin{equation}\label{defining diagram}\begin{tikzcd}
	{X_v} & {\Smd_{X,\et}} \\
	{\Perf_{S,v}} & {\Perf_{S,\et}.}
	\arrow["\mu", from=1-1, to=1-2]
	\arrow["{\pi_v}"', from=1-1, to=2-1]
	\arrow["\mu"', from=2-1, to=2-2]
	\arrow["{\pi_{\Et}}", from=1-2, to=2-2]
\end{tikzcd}\end{equation}
\begin{definition}
    Let $\Gs$ be a smooth commutative $X$-group, viewed as a sheaf on $X_v$. We define the \textbf{geometric Hodge--Tate spectral sequence with} $\Gs$-\textbf{coefficients} to be the following Leray spectral sequence, consisting of sheaves on $\Perf_{S,\et}$
    \begin{align}\label{geometric Hodge--Tate spectral sequence}
    \mathbf{E}_2^{ij}(\Gs) = R^i\pi_{\Et,*}(R^j\mu_* \Gs) \Rightarrow R^{i+j}(\mu_*\pi_{v,*})\Gs.
\end{align}
\end{definition}

When $S=\Spa(K,K^+)$, we may compare it to the group-theoretic Hodge--Tate spectral sequence (\ref{classical Hodge--Tate spectral sequence for general G}).
\begin{lemma}\label{lemma about comparison maps}
    Let $\pi\colon X \rightarrow S=Spa(K,K^+)$ be a smooth rigid space. The canonical maps given by étale sheafification
\begin{align}\label{obvious maps between spectral sequences}
    E_2^{ij} = \,&H_{\et}^i(X,R^j\nu_* \Gs) \rightarrow R^i\pi_{\Et,*}(R^j\mu_* \Gs)(K)= \mathbf{E}_2^{ij}(K)
\end{align}
commute with differentials. The induced maps on the $E_{\infty}$-page are compatible with the natural maps on the abutments given by étale sheafification.
\begin{align}\label{obvious maps between abutment}
    H_v^n(X,\Gs) \rightarrow R^n(\mu_*\pi_{v,*})\Gs(K).
\end{align}
If $K$ is algebraically closed, the above maps are isomorphisms
\[ E_2^{ij} \cong \mathbf{E}_2^{ij}(K) \, , \quad H_v^n(X,\Gs) \cong R^n(\mu_*\pi_{v,*})\Gs(K).  \]
\end{lemma}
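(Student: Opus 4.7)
The plan is to realize both spectral sequences as unravellings of the Leray spectral sequence of the single composite functor $R\Gamma_v(X,-)$, and then obtain the comparison by naturality together with the triviality of étale sheafification at $\Spa(K,K^+)$ over an algebraically closed field.

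I would start with two underived identifications. For any sheaf $\Hs$ on $\Smd_{X,\et}$, the formula $\pi_{\Et,*}\Hs(Y)=\Hs(X\times_K Y)$ gives $\pi_{\Et,*}\Hs(\Spa(K,K^+))=\Hs(X)$. For any $v$-sheaf $\Fs$ on $X$, the restriction $\mu_*\Fs|_{X_\et}$ coincides with $\nu_*\Fs$, since both assign to $U\in X_\et$ the value $\Fs(U)$. Consequently
\[ \Gamma(X,\nu_*-) \;=\; \Gamma\bigl(\Spa(K,K^+),\, \pi_{\Et,*}\mu_*-\bigr) \;=\; \Gamma_v(X,-) \]
as functors on $\mathrm{Shv}(X_v)$. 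Promoting the second identity to the derived level, using Corollary~\ref{higher pushforward of approximating sheaves are again approximating} applied to the approximating sheaves $\Gs[p^{\infty}]$ and $\cj{\Gs}$, together with the standard comparison between cohomology on the big versus small étale site at the object $X$, I obtain an isomorphism $R^j\mu_*\Gs|_{X_\et}\cong R^j\nu_*\Gs$.

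With these identifications in place, I would fix an injective resolution $\Gs\to I^\bullet$ in $\mathrm{Shv}(X_v)$. The canonical filtrations on $\nu_*I^\bullet$ (on $X_\et$) and on $\pi_{\Et,*}\mu_*I^\bullet$ (on $\Perf_{K,\et}$) produce the group-theoretic and geometric spectral sequences respectively. Evaluating the latter at $\Spa(K,K^+)$ and composing with the canonical presheaf-to-sheaf map yields a morphism of filtered complexes
\[ \nu_*I^\bullet \;\longrightarrow\; (\pi_{\Et,*}\mu_*I^\bullet)(\Spa(K,K^+)), \]
and hence a morphism of spectral sequences $E_r^{ij}\to\mathbf{E}_r^{ij}(K)$ compatible with all differentials. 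This morphism agrees with \eqref{obvious maps between spectral sequences} on the $E_2$-page by the identifications above, and its compatibility with the induced filtrations recovers \eqref{obvious maps between abutment} on abutments.

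Finally, assume $K$ is algebraically closed. Using Remark~\ref{Remark: zariski constructibility and overconvergence} to reduce to the case $K^+=\Os_K$, the étale site of $\Spa(K,\Os_K)$ is trivial: any étale cover is split by the identity. Hence the étale sheafification at $\Spa(K,K^+)$ is the identity on the presheaves at hand, so the comparison is an isomorphism on each page and on abutments.

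The main obstacle is the derived identification $R^j\mu_*\Gs|_{X_\et}\cong R^j\nu_*\Gs$: both sides are sheafifications of the same presheaf $U\mapsto H_v^j(U,\Gs)$, but in different sites (big versus small étale). The approximation property of $\cj{\Gs}$ together with Corollary~\ref{higher pushforward of approximating sheaves are again approximating} is what ensures that the two sheafifications agree on rigid objects, and this step warrants the most care.
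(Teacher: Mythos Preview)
Your approach matches the paper's: both spectral sequences arise from compatible factorisations of $R\Gamma_v(X,-)$, and the comparison morphism is formal. Two points in your execution deserve correction, however.

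First, the identification $R^j\mu_*\Gs|_{X_\et}\cong R^j\nu_*\Gs$ does not require Corollary~\ref{higher pushforward of approximating sheaves are again approximating}. If $\rho\colon \Smd_{X,\et}\to X_\et$ is restriction to the small site, then $\rho_*\mu_*=\nu_*$ on the nose and $\rho_*$ is exact (it has an exact left adjoint), hence $R^j\nu_*=\rho_*R^j\mu_*$. This is exactly what the paper means by ``the usual comparison of cohomology between small and big \'etale sites''. Invoking the approximation property of $\Gs[p^\infty]$ and $\cj\Gs$ is not only superfluous but would tacitly restrict the lemma to admissible locally $p$-divisible groups, whereas it holds for arbitrary smooth commutative $X$-groups.

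Second, your reduction to $K^+=\Os_K$ via Remark~\ref{Remark: zariski constructibility and overconvergence} is not justified: that remark concerns overconvergence of Zariski-constructible sheaves, and the terms of the geometric spectral sequence are not of this form in general. In fact no reduction is needed. For $K$ algebraically closed and \emph{any} open bounded $K^+\subseteq K$, every \'etale cover of $\Spa(K,K^+)$ in $\Perf_K$ splits (finite \'etale $K$-algebras are products of copies of $K$), so the evaluation functor $\Fs\mapsto\Fs(K,K^+)$ on $\Sh(\Perf_{K,\et})$ is exact. This is the paper's one-line argument: exactness of taking $(K,K^+)$-points is precisely what propagates the $E_0$-isomorphism to isomorphisms on all pages $E_r$ and on the abutment.
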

\begin{proof}
 We have a commutative diagram
\[\begin{tikzcd}
	{\Sh(X_v)} && {\Sh(\Smd_{X,\et})} \\
	& {\Sh(\Perf_{K,\et})} \\
	& {\Ab.}
	\arrow["{\mu_*}", from=1-1, to=1-3]
	\arrow["{(\mu \circ \pi_v)_*}"{pos=0.7}, from=1-1, to=2-2]
	\arrow["{\Gamma_v}"', from=1-1, to=3-2]
	\arrow["{\pi_{\Et,*}}"'{pos=0.6}, from=1-3, to=2-2]
	\arrow["{\Gamma_{\et}}", from=1-3, to=3-2]
	\arrow[from=2-2, to=3-2]
\end{tikzcd}\]
By the usual comparison of cohomology between small and big étale sites, it is equivalent to work with $X_{\et}$ or $\Smd_{X,\et}$ to compute $E_{2}^{ij}$. By construction of the Leray sequence, there is a collection of isomorphisms 
\[ E_0^{ij} =\Gamma_{\et}(X,I^{i,j}) = \pi_{\Et,*}(I^{i,j})(K) = \mathbf{E}_0^{ij}(K),\]
compatible with differentials, where $ I^{\bullet,\bullet}$ is Cartan--Eilenberg resolution of $R\mu_*\Gs$. These isomorphisms are immediately seen to induce maps 
\begin{align*}\label{comparison maps between spectral sequences}
   E_r^{ij} \rightarrow \mathbf{E}_r^{ij}(K)\, , \quad \fa \, 0\leq r \leq \infty,
\end{align*}
that coincide with étale sheafification for $r=2$. Moreover, if $K$ is assumed to be algebraically closed, taking $(K,K^+)$-rational points is an exact functor, such that the comparison maps $E_r^{ij} \rightarrow \mathbf{E}_r^{ij}(K)$ are isomorphisms, for each $0\leq r \leq \infty$. This concludes the proof.
\end{proof}

The geometric Hodge--Tate spectral sequence takes the following explicit form.
\begin{proposition}\label{prop(new): explicit description of the geometric HT seq}
    Let $\pi\colon X \rightarrow S$ be a proper smooth map of seminormal rigid spaces and let $\Gs$ be an admissible locally $p$-divisible $X$-group. Assume either that $S=\Spa(K,K^+)$ or that $\ga$ comes from a vector bundle on $S$. Then the geometric Hodge--Tate spectral sequence with $\Gs$-coefficients (\ref{geometric Hodge--Tate spectral sequence}) has the following form
\begin{equation}\label{geometric Hodge--Tate sequence for G reformulated}
 \mathbf{E}_2^{ij}(\Gs) = \left\{\begin{aligned}
  &R^i\pi_{\Et,*}(\womega_{X/S}^j \otimes_{\Os_X} \Lie(\Gs)) \quad &\text{if }j>0\\
  &\BBun_{\Gs,\et}^i \quad &\text{if }j=0
\end{aligned}\right\}  \Longrightarrow R^{i+j}(\mu_* \pi_{v,*})\Gs.
\end{equation}
\end{proposition}
\begin{remark}
    We will prove later in Corollary \ref{possible simplification of the abutment} that the abutment satisfies
    \[ R^{i+j}(\mu_* \pi_{v,*})\Gs = \mu_*\BBun_{\Gs,v}^{i+j}.\]
\end{remark}
\begin{remark}
    For $\Gs = \G_a$, this recovers the relative Hodge--Tate spectral sequence of \cite{heuer2024relative}. Note that we require the base $S$ to be seminormal, while \emph{loc. cit.} allows reduced rigid spaces.
\end{remark}

In order to prove Proposition \ref{prop(new): explicit description of the geometric HT seq}, we begin by recalling that Heuer's Hodge--Tate logarithm map glues to the big étale site $\Smd_{X,\et}$.

\begin{lemma}\label{local correspondence on all smoothoids}
    Let $X$ be a rigid space and let $\Gs$ be an admissible locally $p$-divisible $X$-group. Then we have a natural isomorphism of sheaves on $\Smd_{X,\et}$
    \[ R^j\mu_*\Gs =\begin{cases}
        \widetilde{\Omega}^j \otimes_{\Os_X} \Lie(\Gs) \quad &\text{if }j>0\\
        \Gs \quad &\text{if }j=0,
    \end{cases} \]
where $\womega^j$ is the sheaf on $\Smd_{X,\et}$ defined at (\ref{formula: womega}).
\end{lemma}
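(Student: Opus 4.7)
The plan is to adapt the proof of Proposition \ref{sheafified correspondence old school} to the larger site $\Smd_{X,\et}$. The first step will be to establish an analogue of Proposition \ref{Prop: main known results about approximating sheaves} for $\mu$: for any sheaf $\Fs$ on $\Sous_{X,\et}$ with the approximation property, one has $\mu_*\Fs = \Fs$ and $R^j\mu_*\Fs = 0$ for $j\geq 1$. For the vanishing, every smoothoid $Y \in \Smd_X$ is sousperfectoid, so Proposition \ref{Prop: main known results about approximating sheaves} applied to $Y$ yields $H_v^j(Y,\Fs) = H_{\et}^j(Y,\Fs)$. Hence $R^j\mu_*\Fs$ is the $\Smd_{X,\et}$-sheafification of $Y \mapsto H_{\et}^j(Y,\Fs)$, which vanishes for $j\geq 1$: any étale cohomology class over $Y$ is killed by a small-étale cover $\{U_i \to Y\}$, and since étale extensions of smoothoids are smoothoid, this cover also witnesses the triviality in $\Smd_{X,\et}$.

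Equipped with this vanishing, I would treat the cases $j=0$ and $j\geq 1$ separately. For $j=0$, the identification $\mu_*\Gs = \Gs$ follows from fully faithfulness of $(-)^{\diamondsuit}$ on sousperfectoid spaces: for $Y\in \Smd_X$, $\mu_*\Gs(Y) = \Hom_{X^{\diamondsuit}}(Y^{\diamondsuit},\Gs^{\diamondsuit}) = \Hom_X(Y,\Gs) = \Gs(Y)$. For $j\geq 1$, I would invoke the vanishing twice. First, admissibility of $\Gs$ gives that $\cj{\Gs} = \Gs/\widehat{\Gs}$ is approximating, so the short exact sequence $0\to \widehat{\Gs}\to \Gs \to \cj{\Gs} \to 0$ yields $R^j\mu_*\Gs \cong R^j\mu_*\widehat{\Gs}$. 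Second, local $p$-divisibility of $\Gs$ ensures that $\log_{\Gs}$ is étale and hence $\Gs[p^{\infty}]$ is étale over $X$; it is therefore approximating by Example \ref{Ex: sheaves with the apporixation property}.(2), and the logarithm exact sequence (\ref{logarithm exact sequence}) yields $R^j\mu_*\widehat{\Gs} \cong R^j\mu_*\ga$.

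To conclude, I would identify $R^j\mu_*\ga$ via the projection formula. Since $\Lie(\Gs)$ is a vector bundle on $X_{\et}$ and hence flat, the projection formula gives $R^j\mu_*\ga = R^j\mu_*\G_a \otimes_{\Os_X} \Lie(\Gs)$. The identity $R^j\mu_*\G_a = \womega^j$ on $\Smd_{X,\et}$ follows from the analogue on $\Smd_{K,\et}$ recorded just after (\ref{formula: womega}), specialized by restriction along $\Smd_X \subset \Smd_K$. Naturality in $\Gs$ is inherited from naturality of both short exact sequences and of the projection formula.

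The main technical point is the initial vanishing claim for $R^j\mu_*\Fs$: although formally analogous to Proposition \ref{Prop: main known results about approximating sheaves}, it requires combining the conversion of $v$-cohomology into étale cohomology on each individual smoothoid (Proposition \ref{Prop: main known results about approximating sheaves}) with the elementary observation that étale covers of a smoothoid $Y$ remain covers in the big site $\Smd_{X,\et}$. Once this step is in place, the rest of the argument proceeds formally and parallels the proof of Proposition \ref{sheafified correspondence old school}.
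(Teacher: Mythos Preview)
Your proof is correct and amounts to the same argument as the paper's, just organized differently: the paper simply observes that for each $Z\in\Smd_X$ the isomorphism $\HTlog_Z$ of Proposition~\ref{sheafified correspondence old school} on $Z_{\et}$ is natural in $Z$, and since the restriction of $R^j\mu_*\Gs$ to each small site $Z_{\et}$ coincides with $R^j\nu_{Z,*}\Gs$, these glue to the desired isomorphism on $\Smd_{X,\et}$. You instead rerun the proof of Proposition~\ref{sheafified correspondence old school} directly on the big site, which is more explicit but has identical mathematical content.
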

\begin{proof}
   For $Z \in \Smd_{X}$, the isomorphism $\HTlog_Z$ on $Z_{\et}$ of Proposition \ref{sheafified correspondence old school} is natural in $Z$. This allows us to glue these isomorphisms to an isomorphism of sheaves on $\Smd_{X,\et}$, as required.
\end{proof}

Proposition \ref{prop(new): explicit description of the geometric HT seq} now follows from the following result.
\begin{proposition}\label{coherent base change of omega tilde}
    Assume that we are in the situation of Proposition \ref{prop(new): explicit description of the geometric HT seq}. Then for any $i\geq 0$ and $j>0$, 
    \[ R^i\pi_{\Et,*}(R^j\mu_*\Gs) = R^i\pi_{\Et,*}(\womega_{X/S}^j \otimes_{\Os_X} \Lie(\Gs)), \]
    and this sheaf is an étale vector bundle on $S$. If $S=\Spa(K,K^+)$, it is equal to 
    \[ H_{\et}^i(X,\womega_{X}^j \otimes_{\Os_X} \Lie(\Gs)) \otimes \G_a.\]
\end{proposition}
\begin{proof}
Let us take an affinoid perfectoid $Y\xrightarrow{g} S$ and form the following cartesian diagram of adic spaces
\[\begin{tikzcd}
	{X\times_S Y} & Y \\
	X & {S.}
	\arrow["{\pi'}", from=1-1, to=1-2]
	\arrow["{g'}"', from=1-1, to=2-1]
	\arrow["g", from=1-2, to=2-2]
	\arrow["\pi"', from=2-1, to=2-2]
\end{tikzcd}\]
The sheaf
\[ R^i\pi_{\Et,*}(R^j\mu_*\Gs) = R^i\pi_{\Et,*}(\widetilde{\Omega}^j \otimes \Lie(\Gs)) \]
is the étale sheafification of the presheaf taking $Y \in \Perf_S$ to
\[ H_{\et}^i(X\times_S Y, \widetilde{\Omega}_{X \times_S Y}^j \otimes\Lie(\Gs)) = H_{\et}^i(X\times_S Y, g'^*\widetilde{\Omega}_{X/S}^j \otimes \Lie(\Gs)),\]
where we use Example \ref{Example: womega of fiber product}. This shows that
\[ R^i\pi_{\Et,*}(R^j\mu_*\Gs) = R^i\pi_{\Et,*}(\widetilde{\Omega}_{X/S}^j \otimes \Lie(\Gs)) \]
and it remains to show that this sheaf is a vector bundle. In the case where $S=\Spa(K,K^+)$, we have by \cite[Cor. 3.10.1]{heuer2024relative},
\[ H_{\et}^i(X\times_K Y, g'^*\widetilde{\Omega}_{X}^j \otimes \Lie(\Gs)) = H_{\et}^i(X,\widetilde{\Omega}_{X}^j \otimes \Lie(\Gs)) \otimes_K \Os(Y). \]
This shows that
\[  R^i\pi_{\Et,*}(\widetilde{\Omega}_{X/S}^j \otimes \Lie(\Gs)) = H_{\et}^i(X,\widetilde{\Omega}_{X}^j \otimes \Lie(\Gs)) \otimes_K \G_a.\]
Assume now that we are in the second case, so that $\Lie(\Gs) = \pi^*E$ for some vector bundle $E$ on $S$. By the projection formula and the above observations, it is enough to show that $R^i\pi_{\Et,*}\widetilde{\Omega}_{X/S}^j$ is a vector bundle. It follows from the fact that $R^i\pi_{\an,*}\womega_{X/S}^j$ is a vector bundle whose formation commutes with pullbacks along perfectoid spaces $Y \rightarrow S$ \cite[Thm. 5.7.3]{heuer2024relative}.
\end{proof}

\subsection{Degeneration}\label{subection: Degeneration}
We now settle to prove the degeneration of the Hodge--Tate spectral sequences. We continue to denote by $(K,K^+)$ a fixed perfectoid field extension of $\Q_p$.

\begin{thm}\label{thm: geometric Hodge--Tate spectral sequence for general G degenerates}
Let $\pi \colon X \rightarrow S$ be a proper smooth map of seminormal rigid space over $K$ and let $\Gs$ be an admissible locally $p$-divisible group over $X$. Assume either that $S=\Spa(K,K^+)$ or that $\ga$ comes from a vector bundle on $S$. Then the geometric Hodge--Tate spectral sequence with $\Gs$-coefficients \begin{equation*}
 \mathbf{E}_2^{ij}(\Gs) = \left\{\begin{aligned}
  &R^i\pi_{\Et,*}(\womega_{X/S}^j \otimes_{\Os_X} \Lie(\Gs)) \quad &\text{if }j>0\\
  &\BBun_{\Gs,\et}^i \quad &\text{if }j=0
\end{aligned}\right\}  \Longrightarrow R^{i+j}(\mu_*\pi_{v,*})\Gs
\end{equation*}
degenerates at the $E_2$-page.
\end{thm}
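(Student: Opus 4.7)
The plan is to reduce the degeneration to two main inputs: (i) Heuer's degeneration of the relative Hodge--Tate spectral sequence \cite[Thm. 5.7.3, Cor. 5.12]{heuer2024relative}, which applies to vector bundle coefficients, and (ii) the identification of $R^j\mu_*\Gs$ with $\womega_{X/S}^j \otimes \Lie(\Gs)$ for $j \geq 1$ (Lemma \ref{local correspondence on all smoothoids}), which encodes that $R^j\mu_*$ only sees the Lie algebra of $\Gs$ in positive degree. A divisibility argument then rules out the stray differentials landing in the $j = 0$ row.

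First I would exploit admissibility: since $\cj{\Gs} = \Gs/\widehat{\Gs}$ has the approximation property (Definition \ref{Def: admissible locally p-divisible groups}) and $\Gs[p^\infty]$ is étale with the approximation property (Example \ref{Ex: sheaves with the apporixation property}.(2)), Proposition \ref{Prop: main known results about approximating sheaves} gives $R^j\mu_*\cj{\Gs} = R^j\mu_*\Gs[p^\infty] = 0$ for $j \geq 1$. Together with the exact sequences $0 \to \widehat{\Gs} \to \Gs \to \cj{\Gs} \to 0$ and $0 \to \Gs[p^\infty] \to \widehat{\Gs} \xrightarrow{\log_{\Gs}} \ga \to 0$, this shows that the inclusion $\widehat{\Gs} \hookrightarrow \Gs$ and the logarithm $\log_\Gs$ induce isomorphisms $\mathbf{E}_2^{i,j}(\widehat{\Gs}) \cong \mathbf{E}_2^{i,j}(\Gs)$ and $\mathbf{E}_2^{i,j}(\widehat{\Gs}) \cong \mathbf{E}_2^{i,j}(\ga)$ for every $j \geq 1$. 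By naturality of the Leray spectral sequence, these isomorphisms are compatible with all higher differentials.

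Next, for the case $\Gs = \ga$, Proposition \ref{coherent base change of omega tilde} says that the $E_2$-entries are étale vector bundles on $S$ whose formation commutes with base change along smooth rigid and perfectoid spaces over $S$. After such pullback one may identify the spectral sequence with Heuer's relative Hodge--Tate spectral sequence tensored with $\Lie(\Gs)$ via the projection formula, and hence invoke its $E_2$-degeneration. Combined with the isomorphisms established above, this already forces all differentials $d_r$ for $r \geq 2$ between rows $j \geq 1$ of the $\widehat{\Gs}$- and $\Gs$-spectral sequences to vanish.

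Finally, to kill the potential differentials $d_r\colon \mathbf{E}_r^{i,r-1} \to \mathbf{E}_r^{i+r,0}$ landing in the $j = 0$ row, I first argue for $\Gs = \widehat{\Gs}$. By induction on $r$ one may assume that the source equals the full étale vector bundle $\mathbf{E}_2^{i,r-1}(\widehat{\Gs}) = R^i\pi_{\Et,*}(\womega_{X/S}^{r-1} \otimes \Lie(\Gs))$. Composing $d_r$ with $\log_*\colon \BBun_{\widehat{\Gs},\et}^{i+r} \to \BBun_{\ga,\et}^{i+r}$ produces the corresponding differential of the $\ga$-spectral sequence, which is zero, so the image of $d_r$ lies in $\ker(\log_*) = \BBun_{\Gs[p^\infty],\et}^{i+r}$ by (\ref{geometric cohomological log exact sequence}). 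This kernel is étale-locally representable by a Zariski-constructible $p$-power torsion sheaf by Proposition \ref{Prop: main properties of diamantine higher direct images}.(2), while the source is uniquely $p$-divisible as a sheaf of abelian groups (being an $\Os_S$-module sheaf on $\Perf_{S,\et}$ in characteristic $0$). As there are no nonzero morphisms from a uniquely $p$-divisible sheaf to an étale-locally $p^n$-torsion one, $d_r = 0$. The degeneration for general $\Gs$ then follows from that for $\widehat{\Gs}$ by functoriality along the map $\widehat{\Gs} \hookrightarrow \Gs$, using again the isomorphism on rows $j \geq 1$ to conclude $d_r^{\Gs} = 0$ by a diagram chase. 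The main anticipated obstacle is the precise setup of the vector bundle case, transporting Heuer's degeneration, which is formulated on $S_v$ for $v$-vector bundles, to the bigger site $\Perf_{S,\et}$; the other arguments are formal once this comparison is made.
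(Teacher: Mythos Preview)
Your proof is correct and follows essentially the same strategy as the paper: reduce to $\widehat{\Gs}$ and $\ga$ via the isomorphisms on $R^j\mu_*$ for $j\geq 1$, invoke the vector-bundle case (the paper's Proposition \ref{geometric Hodge--Tate spectral sequence for G_a degenerates}, which is exactly your step (ii) plus the site comparison you flag as the main obstacle), and then kill the residual differentials landing in row $j=0$ by factoring through $\BBun_{\Gs[p^\infty],\et}^{i+r}$. The one substantive difference is in that last step: the paper argues geometrically, observing that the target is pulled back from $S_{\et}$ and hence the vanishing may be checked on geometric fibres, where a connected vector group maps to a discrete group; you instead argue algebraically via unique $p$-divisibility of the source against $p^\infty$-torsion of the target. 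Your argument is slightly more direct (no fibrewise reduction needed), while the paper's reuses verbatim the mechanism already set up in Proposition \ref{Prop: main properties of diamantine higher direct images}.(3). You are also more explicit about the induction on $r$, which the paper leaves implicit after treating $d_2$.
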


Before we pass to the proof of the theorem, we note that it immediately implies the following.
\begin{cor}\label{Cor: The classical sseq degenerates}
    Let $X$ be a proper smooth rigid spaces over $K$ with $K$ algebraically closed, and let $\Gs$ be an admissible locally $p$-divisible group over $X$. Then the Hodge--Tate spectral sequence with $\Gs$-coefficients
    \begin{equation*}
 E_2^{ij}(\Gs)= \left\{\begin{aligned}
  &H_{\et}^i(X,\widetilde{\Omega}_X^j \otimes_{\Os_X} \Lie(\Gs)) \quad &\text{if }j>0\\
  &H_{\et}^i(X,\Gs) \quad &\text{if }j=0
\end{aligned}\right\}  \Longrightarrow H_{v}^{i+j}(X,\Gs).
\end{equation*}
degenerates at $E_2$.
\end{cor}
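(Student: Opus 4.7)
The plan is to deduce this statement immediately from Theorem \ref{thm: geometric Hodge--Tate spectral sequence for general G degenerates} by specializing the base to the point $S=\Spa(K,K^+)$ and then evaluating the resulting spectral sequence of étale sheaves on $\Perf_K$ at $K$-rational points. Both degenerations are naturally about morphisms of spectral sequences being zero on every page $r\geq 2$, so the whole argument reduces to checking that the evaluation functor $\Fs \mapsto \Fs(K)$ identifies the two sequences in a manner compatible with all differentials.

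First I apply Theorem \ref{thm: geometric Hodge--Tate spectral sequence for general G degenerates} to the structure morphism $\pi\colon X \rightarrow \Spa(K,K^+)$. The hypothesis that $\Lie(\Gs)$ comes via pullback from a vector bundle on $S$ is vacuous when $S$ is a point, so the geometric Hodge--Tate spectral sequence (\ref{geometric Hodge--Tate sequence for G reformulated}) degenerates at $\mathbf{E}_2$ in the category of étale sheaves on $\Perf_{K,\et}$. In other words, all geometric differentials $\mathbf{d}_r$ vanish for $r\geq 2$.

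Next I pass to $K$-rational points. By Lemma \ref{lemma about comparison maps}, since $K$ is algebraically closed, evaluation at $\Spa(K,K^+)$ produces isomorphisms $E_r^{ij}(\Gs) \xrightarrow{\cong} \mathbf{E}_r^{ij}(\Gs)(K)$ for all $r$, commuting with the differentials and inducing an isomorphism on abutments $H_v^n(X,\Gs) \cong R^n(\mu_*\pi_{v,*})\Gs(K)$. Since the geometric differentials vanish for $r\geq 2$, so do the classical differentials, which is exactly the desired degeneration.

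There is no genuine obstacle at the level of the corollary itself: all the difficulty is concentrated in the proof of Theorem \ref{thm: geometric Hodge--Tate spectral sequence for general G degenerates} (which in turn relies on the degeneration of Heuer's relative Hodge--Tate spectral sequence (\ref{eq: relative HT sseq of Heuer}) and on the structure results for $\BBun_{\Gs,\tau}^n$ established in Proposition \ref{Prop: main properties of diamantine higher direct images}). The algebraically closed hypothesis enters only through Lemma \ref{lemma about comparison maps}, where it ensures that $\Spa(K,K^+)$ is a geometric point so that taking sections is exact and commutes with the formation of the Leray spectral sequence.
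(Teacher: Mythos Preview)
Your proof is correct and follows exactly the paper's own argument: apply Theorem \ref{thm: geometric Hodge--Tate spectral sequence for general G degenerates} with $S=\Spa(K,K^+)$, then use Lemma \ref{lemma about comparison maps} (the comparison maps are isomorphisms for algebraically closed $K$) to transport the degeneration to the group-theoretic spectral sequence. One minor remark: the hypothesis of Theorem \ref{thm: geometric Hodge--Tate spectral sequence for general G degenerates} is a disjunction, so you invoke the clause $S=\Spa(K,K^+)$ directly rather than arguing that the vector-bundle condition becomes vacuous.
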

\begin{proof}
This follows from Theorem \ref{thm: geometric Hodge--Tate spectral sequence for general G degenerates} in the case $S=\Spa(K,K^+)$ and the fact that the comparison maps (\ref{obvious maps between spectral sequences}) are isomorphisms, as $K$ is assumed to be algebraically closed.
\end{proof}

We now turn towards the proof of Theorem \ref{thm: geometric Hodge--Tate spectral sequence for general G degenerates}. We start with the case where $\Gs=\ga$ is a vector bundle.
\begin{proposition}\label{geometric Hodge--Tate spectral sequence for G_a degenerates}
    In the situation of Theorem \ref{thm: geometric Hodge--Tate spectral sequence for general G degenerates}, the geometric Hodge--Tate spectral sequence
    \begin{align}\label{Hodge--Tate sequence for G_a}
    \mathbf{E}_2^{ij}(\ga) = R^i\pi_{\Et,*}(\widetilde{\Omega}_{X/S}^j\otimes_{\Os_X} \ga) \Rightarrow R^{i+j}(\mu_*\pi_{v,*})\ga = \mu_*\BBun_{\ga,v}^{i+j}
\end{align}
degenerates at $E_2$.
\end{proposition}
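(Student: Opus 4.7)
The approach is to reduce the degeneration to two known ingredients: Proposition \ref{prop: degeneration of additive Hodge--Tate spectral sequence} for the absolute case, and the degeneration of Heuer's relative Hodge--Tate spectral sequence (\ref{eq: relative HT sseq of Heuer}) for the relative case. The first step is to reduce to $\Gs = \G_a$. In the absolute case $S = \Spa(K, K^+)$, Proposition \ref{coherent base change of omega tilde} shows that the $E_2$-page is the constant sheaf $H_{\et}^i(X, \widetilde{\Omega}_X^j \otimes_{\Os_X} \Lie(\Gs)) \otimes_K \G_a$; similarly, by \cite[Cor. 3.10.1]{heuer2024relative}, the abutment $R^n\pi_{v,*}\ga$ equals $H_v^{n}(X,\ga) \otimes_K \G_a$. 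Thus our geometric spectral sequence is the classical Hodge--Tate spectral sequence of Proposition \ref{prop: degeneration of additive Hodge--Tate spectral sequence} (applied to $E = \Lie(\Gs)$) tensored over $K$ with the exact operation $(-) \otimes_K \G_a$, so its degeneration is automatic. In the relative case $\ga = \pi^*E$ for a vector bundle $E$ on $S$, the projection formula applied to both $R^i\pi_{\Et,*}$ and $R^{i+j}\pi_{v,*}$ identifies our spectral sequence with the spectral sequence for $\G_a$ tensored over $\Os_S$ with the locally free sheaf $E$, again reducing to $\Gs = \G_a$.

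For $\Gs = \G_a$, the spectral sequence reads
\[ R^i\pi_{\Et,*}\widetilde{\Omega}_{X/S}^j \Longrightarrow R^{i+j}\pi_{v,*}\G_a. \]
By Proposition \ref{coherent base change of omega tilde} and \cite[Cor. 4.6, Thm. 5.7.3]{heuer2024relative}, both sides are representable by étale vector bundles on $S$ whose formation commutes with base change along perfectoid and smooth rigid $S$-spaces. The key point is to identify this spectral sequence with the pullback from $S_v$ to $\Perf_{S, \et}$ of Heuer's relative Hodge--Tate spectral sequence (\ref{eq: relative HT sseq of Heuer}): the $E_2$-pages agree by the above base change property, while the abutments agree via the relative primitive comparison isomorphism $R^n\pi_{v,*}\Q_p \otimes_{\Q_p} \Os_{S_v} \cong R^n\pi_{v,*}\G_a$. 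Since the differentials are morphisms of étale vector bundles on $S$, their vanishing is étale-local on $S$ and can be checked after pullback to $S_v$, where it follows from the degeneration of Heuer's spectral sequence.

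The main difficulty lies in the identification in the second step: one must verify that our Leray spectral sequence for the equality of functors $\pi_{\Et} \circ \mu = \mu \circ \pi_v$ matches Heuer's relative Hodge--Tate spectral sequence page by page, including on all differentials, after the appropriate pullback. This is ultimately a naturality argument combining the Leray machinery with the primitive comparison theorem; once it is in place, the degeneration follows immediately from Heuer's result.
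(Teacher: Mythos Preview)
Your proposal is correct and follows essentially the same route as the paper: in the absolute case reduce to Proposition~\ref{prop: degeneration of additive Hodge--Tate spectral sequence} (the paper phrases this as checking degeneration on $K$-points rather than tensoring with $\G_a$), and in the relative case apply the projection formula and invoke Heuer's degeneration \cite[Cor.~5.12]{heuer2024relative}. The paper does not regard the identification with Heuer's spectral sequence as a difficulty at all---it simply records in the remark after (\ref{geometric Hodge--Tate spectral sequence}) that for $\Gs=\G_a$ the geometric sequence \emph{is} Heuer's relative sequence, and then cites his result directly---so the ``main difficulty'' you flag is taken for granted.
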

\begin{proof}
Let us first show that the abutment satisfies
\[ R^n(\mu_*\pi_{v,*})\ga=\mu_*\BBun_{\ga,v}^{n}.\]
By a Leray sequence argument, it is enough to show that
\[ R^m\mu_*(\BBun_{\ga,v}^{n})=0, \quad \fa m>0.\]
Let $Y \in \Perf_S$ be an affinoid perfectoid space and let $\nu\colon Y_v \rightarrow Y_{\et}$ be the natural map. It is enough to show that
\[ R^m\nu_*(\BBun_{\ga,v}^{n})=0.\]
We know by Proposition \ref{Prop: main properties of diamantine higher direct images}(1) that $E = \BBun_{\ga,v}^{n}$ is represented by a $v$-vector bundle, but étale and $v$-vector bundles on $Y$ agree by \cite[Thm. 5.3.8]{kedlaya2019relative2}. Therefore Proposition \ref{sheafified correspondence old school} applies and we find that
    \[ R^m\nu_*E = \womega_Y^m \otimes E =0, \]
    as required. 
    
    We now prove the degeneration of the sequence. Let us first assume that $S=\Spa(K,K^+)$. In that case, by \cite[Cor. 3.10.1]{heuer2024relative}, we further have, for any perfectoid space $Y$ 
\[ H_{v}^i(X\times_K Y, \ga) = H_{v}^i(X,\ga) \otimes_K \Os(Y). \]
This shows that
\[  R^n\pi_{v,*}\ga = H_{v}^n(X,\ga) \otimes_K \G_a.\]
The geometric Hodge--Tate spectral sequence thus has the following shape
\begin{align*}
    \mathbf{E}_2^{ij}(\ga) = H_{\et}^i(X,\widetilde{\Omega}_{X/S}^j\otimes_{\Os_X} \ga)\otimes_K \G_a \Rightarrow H_v^{i+j}(X,\ga) \otimes_K \G_a,
\end{align*}
and its degeneration can clearly be checked on $K$-points, which is Proposition \ref{prop: degeneration of additive Hodge--Tate spectral sequence}. Let us now assume that $S$ is arbitrary and that $\ga$ comes from a vector bundle on $S$. In that case the statement follows from the projection formula and the degeneration of the relative Hodge--Tate spectral sequence for $\Os_X$ \cite[Cor. 5.12]{heuer2024relative}.
\end{proof}

\begin{proof}[Proof of Theorem \ref{thm: geometric Hodge--Tate spectral sequence for general G degenerates}]
We have morphisms of $X$-groups and hence of $v$-sheaves
\[ \Gs \supseteq \widehat{\Gs} \xrightarrow{\log} \ga.\]
These induce morphisms between the $E_2$-pages of their respective geometric Hodge--Tate spectral sequences. Moreover, these maps induce isomorphisms
\[ R^j\mu_*\Gs \xleftarrow{\cong} R^j\mu_*\widehat{\Gs} \xrightarrow{\cong} R^j\mu_*\ga, \]
for all $j\geq 1$, by Lemma \ref{local correspondence on all smoothoids}. These induce isomorphisms between the respective terms $\mathbf{E}_2^{ij}$, for $j\geq 1$ and any $i$. As a consequence, since the geometric Hodge--Tate spectral sequence for $\ga$ degenerates, by Proposition \ref{geometric Hodge--Tate spectral sequence for G_a degenerates}, we deduce that $\mathbf{d}_2^{ij}\colon \mathbf{E}_2^{ij}(\Gs) \rightarrow \mathbf{E}_2^{i+2,j-1}(\Gs)$ is the zero map, for all $j\geq 2$ and all $i$. It remains to show that the differential
\[ \mathbf{d}_2^{i,1}\colon R^i\pi_{\Et,*}(\widetilde{\Omega}_{X/S}^1\otimes\Lie(\Gs)) \rightarrow \BBun_{\Gs,\et}^{i+2}\] 
vanishes, for all $i\geq 0$. We have a commutative diagram
\[\begin{tikzcd}
	{R^i\pi_{\Et,*}(\widetilde{\Omega}_{X/S}^1\otimes\Lie(\Gs))} & {R^i\pi_{\Et,*}(\widetilde{\Omega}_{X/S}^1\otimes\Lie(\Gs))} & {R^i\pi_{\Et,*}(\widetilde{\Omega}_{X/S}^1\otimes\Lie(\Gs))} \\
	{\BBun_{\Gs,\et}^{i+2}} & {\BBun_{\widehat{\Gs},\et}^{i+2}} & {\BBun_{\ga,\et}^{i+2},}
	\arrow["{\mathbf{d}_2^{i,1}(\Gs)}", from=1-1, to=2-1]
	\arrow["{=}"', from=1-2, to=1-1]
	\arrow["{=}", from=1-2, to=1-3]
	\arrow["{\mathbf{d}_2^{i,1}(\widehat{\Gs})}", from=1-2, to=2-2]
	\arrow["{\mathbf{d}_2^{i,1}(\ga)=0}", from=1-3, to=2-3]
	\arrow[from=2-2, to=2-1]
	\arrow["\log"', from=2-2, to=2-3]
\end{tikzcd}\]
so that it is enough to show that $\mathbf{d}=\mathbf{d}_2^{i,1}(\widehat{\Gs}) =0$. From the above diagram, we have a factorization
\[\begin{tikzcd}
	&& {R^i\pi_{\Et,*}(\widetilde{\Omega}_{X/S}^1\otimes\Lie(\Gs))} \\
	0 & {\BBun_{\Gs[p^{\infty}],\et}^{i+2}} & {\BBun_{\widehat{\Gs},\et}^{i+2}} & {\BBun_{\ga,\et}^{i+2}} & {0.}
	\arrow["{\textbf{d}'}"', dashed, from=1-3, to=2-2]
	\arrow["{\textbf{d}}", from=1-3, to=2-3]
	\arrow["0", from=1-3, to=2-4]
	\arrow[from=2-1, to=2-2]
	\arrow[from=2-2, to=2-3]
	\arrow["\log"', from=2-3, to=2-4]
	\arrow[from=2-4, to=2-5]
\end{tikzcd}\]
As in the proof of Proposition \ref{Prop: main properties of diamantine higher direct images}(3), we deduce that, as the target of $\mathbf{d}'$ is an ind-Zariski-constructible sheaf on $S_{\et}$ (Proposition \ref{Prop: main properties of diamantine higher direct images}(2)) and its source is fiberwise connected (Proposition \ref{coherent base change of omega tilde}), it must be the zero map. Hence $\mathbf{d}=\mathbf{d}_2^{i,2}(\widehat{\Gs})$ vanishes, as required.
\end{proof}

\begin{cor}\label{possible simplification of the abutment}
     Assume we are in the situation of Theorem \ref{thm: geometric Hodge--Tate spectral sequence for general G degenerates}. Then if $\mu\colon \Perf_{S,v} \rightarrow \Perf_{S,\et}$ denotes the natural map of sites, 
     \[ R^m\mu_*(\BBun_{\Gs,v}^{n}) = 0\, , \quad \fa n\geq 0\, , \quad \fa m >0.\]
     In particular, the abutment of the geometric Hodge--Tate spectral sequence satisfies
    \[ R^{n}(\mu_*\pi_{v,*})\Gs= \mu_*\BBun_{\Gs,v}^{n}.\]
\end{cor}
\begin{proof}
The last equality in the statement follows from the first part, using the Leray sequence computing $R^{n}(\mu_*\pi_{v,*})\Gs$. Let's prove the first part, starting with the case $\Gs=\widehat{\Gs}$. Let $Y$ be any affinoid perfectoid space over $S$ and consider the canonical map $\nu\colon Y_v \rightarrow Y_{\et}$. We need to show that
\[ R^m\nu_*(\BBun_{\widehat{\Gs},v}^{n}) = 0, \quad \fa m>0.\]
From the short exact sequence (\ref{geometric cohomological log exact sequence}), it is enough to show that
    \[ R^m\nu_*(\BBun_{\Gs[p^{\infty}],v}^{n}) = R^m\nu_*(\BBun_{\ga,v}^{n}) = 0.\]
    By Corollary \ref{higher pushforward of approximating sheaves are again approximating}(1)-(2), the sheaf $\BBun_{\widehat{\Gs}[p^{\infty}],v}^{n}$ has the approximation property, hence it follows from Proposition \ref{Prop: main known results about approximating sheaves}(3) that the left term vanishes. The vanishing of the right term was already shown in the proof of Proposition \ref{geometric Hodge--Tate spectral sequence for G_a degenerates}.
    
    We now treat the case of general $\Gs$. For this, we need the following intermediary result.
    \begin{lemma}\label{lemma: BunGv surjected upon by smooth space}
    In the situation of Theorem \ref{thm: geometric Hodge--Tate spectral sequence for general G degenerates}, there exists a surjection $V \rightarrow \BBun_{\Gs,v}^n$ of $v$-sheaves on $\Perf_{S}$, where $V$ is a smooth $S$-space. 
\end{lemma}
Let us assume the lemma. We have an exact sequence of $v$-sheaves on $\Perf_{S,v}$
\[\begin{tikzcd}
	\ldots & {\BBun_{\widehat{\Gs},v}^n} & {\BBun_{\Gs,v}^n} & {\BBun_{\cj{\Gs},v}^n} & {\BBun_{\widehat{\Gs},v}^{n+1}} & \ldots
	\arrow["{\delta_{n-1}}", from=1-1, to=1-2]
	\arrow[from=1-2, to=1-3]
	\arrow[from=1-3, to=1-4]
	\arrow["{\delta_n}", from=1-4, to=1-5]
	\arrow[from=1-5, to=1-6]
\end{tikzcd}\]
Those sheaves and the above exact sequence admit a natural extension to $\Adic_{S,v}$. By Corollary \ref{higher pushforward of approximating sheaves are again approximating}, the sheaves $\BBun_{\cj{\Gs},v}^n$ satisfy the approximation property. We claim that the sheaves $\Ker(\delta_n)$ and $\Image(\delta_n)$ also satisfy the approximation property. Indeed, consider the composition $V\rightarrow \BBun_{\Gs,v}^n \rightarrow \BBun_{\cj{\Gs},v}^n$, where $V \rightarrow \BBun_{\Gs,v}^n$ is as in Lemma \ref{lemma: BunGv surjected upon by smooth space}. Then by Proposition \ref{Prop: approximating sheaves are small v-sheaves}, the diamond
\[ R = V \times_{\BBun_{\cj{\Gs},v}^n} V \]
is open in $V \times_S V$, such that the étale quotient $V/^{\et}R$ has the approximation property. In particular, it coincides with its $v$-sheafification $V/^{v}R = \Ker(\delta_n)$. We now consider the short exact sequence of étale sheaves
\[\begin{tikzcd}
	0 & {\Ker(\delta_n)} & {\BBun_{\cj{\Gs},v}^n} & {\Image_{\et}(\delta_n)} & {0,}
	\arrow[from=1-1, to=1-2]
	\arrow[from=1-2, to=1-3]
	\arrow["{\delta_n}", from=1-3, to=1-4]
	\arrow[from=1-4, to=1-5]
\end{tikzcd}\]
    where the first and second sheaves satisfy the approximation property. It is then easily seen that the third sheaf $\Image_{\et}(\delta_n)$ automatically satisfies the approximation property as well. In particular, $\Image_{\et}(\delta_n)$ is a $v$-sheaf and thus the above sequence is also right-exact in the $v$-topology.
    
    We now consider the exact sequence of $v$-sheaves
\[\begin{tikzcd}
	0 & {\Image(\delta_{n-1})} & {\BBun_{\widehat{\Gs},v}^n} & {\BBun_{\Gs,v}^n} & {\Ker(\delta_n)} & {0.}
	\arrow[from=1-1, to=1-2]
	\arrow[from=1-2, to=1-3]
	\arrow[from=1-3, to=1-4]
	\arrow[from=1-4, to=1-5]
	\arrow[from=1-5, to=1-6]
\end{tikzcd}\]
Each sheaf $\Fs$ on the left or right of $\BBun_{\Gs,v}^{n}$ in the above sequence satisfies $R^m\mu_*\Fs = 0$, by the case $\Gs=\widehat{\Gs}$ and Proposition \ref{Prop: main known results about approximating sheaves}(3) respectively. By decomposing the above sequence into short exact sequences, we finally deduce that
\[ R^m\mu_*(\BBun_{\Gs,v}^{n}) = 0,\]
which concludes the proof.
\end{proof}

\begin{proof}[Proof of Lemma \ref{lemma: BunGv surjected upon by smooth space}]
    By Theorem \ref{thm: geometric Hodge--Tate spectral sequence for general G degenerates}, we have a descending filtration 
    \[ 0=\FFil^{n+1} \sub \FFil^n \sub \ldots \sub \FFil^0= R^n(\mu_*\pi_{v,*})\Gs\]
    by étale sheaves on $\Perf_{S}$ with $\FFil^n = \BBun_{\Gs,\et}^n$ and short exact sequences, for $0\leq i < n$
\[\begin{tikzcd}
	0 & {\FFil^{i+1}} & {\FFil^{i}} & {\mathbf{V}_{i,n}} & {0,}
	\arrow[from=1-1, to=1-2]
	\arrow[from=1-2, to=1-3]
	\arrow[from=1-3, to=1-4]
	\arrow[from=1-4, to=1-5]
\end{tikzcd}\]
where $\mathbf{V}_{i,n} = R^i\pi_{\Et,*}(\womega_{X/S}^{n-i}\otimes\Lie(\Gs))$. By passing to the $v$-sheafification $\mu^*$, we obtain a filtration by $v$-sheaves on $\Perf_S$
\[ \mu^*\FFil^{\bullet} \sub \mu^*R^n(\mu_*\pi_{v,*})\Gs = \BBun_{\Gs,v}^n,\]
with short exact sequences 
\[\begin{tikzcd}
	0 & {\mu^*\FFil^{i+1}} & {\mu^*\FFil^{i}} & {\mathbf{V}_{i,n}} & {0.}
	\arrow[from=1-1, to=1-2]
	\arrow[from=1-2, to=1-3]
	\arrow[from=1-3, to=1-4]
	\arrow[from=1-4, to=1-5]
\end{tikzcd}\]
Here, we use the fact that $\mathbf{V}_{i,n}$ is a vector bundle and thus is already a $v$-sheaf. Observe that the sequence is already exact for the étale topology, using that the surjection of étale sheaves $\FFil^i \rightarrow \mathbf{V}_{i,n}$ factors through $\mu^*\FFil^i$. By Lemma \ref{lemma: étale surjection of $v$-sheaves} below, the map $\mu^*\FFil^i \rightarrow \mathbf{V}_{i,n}$ admits sections étale locally on $\mathbf{V}_{i,n}$. Hence, by arguing inductively, it is enough to show that each $\mu^*\FFil^i$ admits a surjection of $v$-sheaves from a smooth $S$-space, and we reduce to $i=n$. For this, it is enough to show that $\FFil^n = \BBun_{\Gs,\et}^n$ admits a surjection of étale sheaves from a smooth $S$-space. By Proposition \ref{Prop: main properties of diamantine higher direct images}(4), it is enough to show that the sheaf $\BBun_{\Gs,\et}^{n,\rig}$ on $\Sm_{/S,\et}$ admits a surjection from a smooth $S$-space. This follows from the co-Yoneda Lemma, since the site $\Sm_{/S,\et}$ is essentially small.
\end{proof}

\begin{lemma}\label{lemma: étale surjection of $v$-sheaves}
    Let $S$ be a good adic space over $\Q_p$ and $\pi\colon \Fs \rightarrow \Gs$ be a map of $v$-sheaves on $\Perf_S$. Suppose that $\pi$ is a surjection of étale sheaves. Then for any good adic space $Y$ over $S$ and any map of $v$-sheaves $g\in \Gs(Y)$, there exists an étale cover $Y'\rightarrow Y$ and a map $f\in \Fs(Y')$ such that $\pi(f) = g\restr{Y'}$.
\end{lemma}
\begin{proof}
    We may assume that $Y$ is affinoid. By \cite[Lemma 15.3]{scholze2022etale}, there exists a $v$-cover 
\[ \widetilde{Y}\approx \varprojlim_j Y_j \rightarrow Y\] 
where $\widetilde{Y}$ is an affinoid perfectoid space,$Y_j$ are affinoid, finite étale over $Y$ and each $\widetilde{Y} \rightarrow Y_j$ is surjective. Since $\Fs \rightarrow \Gs$ is surjective for the étale topology on $\Perf_S$, we may find a qcqs étale cover $\widetilde{Z} \rightarrow \widetilde{Y}$ and a map $\widetilde{s}$ fitting in the following commutative diagram of sheaves on $\Perf_S$ 
\[\begin{tikzcd}
	{\widetilde{Z}} && \Fs \\
	{\widetilde{Y}} & Y & \Gs
	\arrow["{\widetilde{f}}", dashed, from=1-1, to=1-3]
	\arrow[from=1-1, to=2-1]
	\arrow["\pi", from=1-3, to=2-3]
	\arrow[from=2-1, to=2-2]
	\arrow["g", from=2-2, to=2-3]
\end{tikzcd}\]
By Lemma \ref{diamond and étale site of strong tilde timit}(2), $\widetilde{Z}$ arises via pullback from an étale cover $Z_j\rightarrow Y_j$. Using that $\widetilde{Y} \rightarrow Y_j$ is surjective, it is easy to see that the map $\widetilde{f}$ descends to a map $f_j\colon Z_j \rightarrow \Fs$ satisfying the required properties.
\end{proof}

\begin{remark}\label{Remark: BunGet is a v-sheaf}
    In situation of Theorem \ref{thm: geometric Hodge--Tate spectral sequence for general G degenerates}, there is therefore a filtration of $\BBun_{\Gs,v}^n$ by subsheaves on $\Perf_{S,\et}$
    \[  \FFil^n \sub \ldots \sub \FFil^0 =\BBun_{\Gs,v}^n \]
    with 
    \[\FFil^n = \BBun_{\Gs,\et}^n \text{ and }\FFil^i/\FFil^{i+1} \cong R^i\pi_{\Et,*}(\womega_{X/S}^j\otimes \Lie(\Gs)),\]
    for each $0\leq i<n$. In particular, since $\BBun_{\Gs,v}^n$ and $R^i\pi_{\Et,*}(\womega_{X/S}^j\otimes \Lie(\Gs))$ are small $v$-sheaves, we deduce, arguing inductively, that each $\FFil^i$ is a small $v$-sheaf. In particular, $\BBun_{\Gs,\et}^n$ is a small $v$-sheaf on $\Perf_S$. We do not know whether its canonical extension to $S_v$ agrees with $\BBun_{\Gs,\et}^{n,\rig}$ on $\Sm_{/S,\et}$. When the latter is representable by a smooth $S$-group, this is an easy consequence of Proposition \ref{Prop: main properties of diamantine higher direct images}(4).
\end{remark}

\begin{remark}\label{remark: local splittings}
    Assume that $S=\Spa(K,K^+)$. Then arguing as in the proof of \cite[Thm. 2.7.3]{heuer2023diamantine}, we may use the exponential of Lemma \ref{lemma: exponential converges} to produce a splitting of the short exact sequence
\[\begin{tikzcd}
	0 & {\FFil^{i+1}} & {\FFil^{i}} & {H^i(X,\womega_X^{n-i}\otimes \Lie(\Gs))\otimes_K \G_a} & 0
	\arrow[from=1-1, to=1-2]
	\arrow[from=1-2, to=1-3]
	\arrow[from=1-3, to=1-4]
	\arrow[from=1-4, to=1-5]
\end{tikzcd}\]
over an open subgroup of the form $\Lambda_i \otimes_{K^+} \G_a^+$, where $\Lambda_i \sub H^i(X,\womega_X^{n-i}\otimes \Lie(\Gs))$ is an open $K^+$-lattice.
\end{remark}

\subsection{The Hodge--Tate decomposition}
In this section, we prove the following Hodge--Tate decomposition with locally $p$-divisible coefficients.
\begin{thm}\label{Theorem: The classical sseq splits}
    Let $(K,K^+)$ be an algebraically closed, non-archimedean field extension of $\Q_p$. Let $X$ be a proper smooth rigid space over $K$ and let $\Gs \rightarrow X$ be an admissible locally $p$-divisible group. A choice of exponential for $K$ and of a flat lift $\X$ to $\BdR+/\xi^2$ induce a splitting of the Hodge--Tate spectral sequence with $\Gs$-coefficients (\ref{classical Hodge--Tate sequence for G reformulated}), natural in $\Gs$ and $\X$, and thus a decomposition
    \begin{align}
        H_{v}^n(X,\Gs) = H_{\et}^n(X,\Gs) \oplus \bigoplus_{i=0}^{n-1} H^i(X,\widetilde{\Omega}_X^{n-i}\otimes_{\Os_X}\Lie(\Gs)).
    \end{align}
\end{thm}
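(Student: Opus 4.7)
The plan is to split the Hodge--Tate filtration on $H_v^n(X, \mathcal{G})$ by bridging through the vector bundle $\mathfrak{g} = \Lie(\mathcal{G}) \otimes_{\Os_X} \G_a$ and the topological $p$-torsion subgroup $\widehat{\mathcal{G}}$. On the $\mathfrak{g}$-side, the $\BdR+/\xi^2$-lift $\X$ splits the additive Hodge--Tate spectral sequence; on the $\widehat{\mathcal{G}}$-side, the exponential for $K$ lifts this splitting back across the logarithm; the inclusion $\widehat{\mathcal{G}} \hookrightarrow \mathcal{G}$ then carries it to $\mathcal{G}$.

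By Corollary \ref{Cor: The classical sseq degenerates}, the spectral sequence $E_2^{i,j}(\mathcal{G}) \Rightarrow H_v^{i+j}(X, \mathcal{G})$ degenerates at $E_2$, so $H_v^n(X, \mathcal{G})$ carries a filtration $\Fil^{\bullet}$ with $\Fil^n = H_{\et}^n(X, \mathcal{G})$ and $\Fil^i/\Fil^{i+1} \cong H^i(X, \widetilde{\Omega}_X^{n-i} \otimes \Lie(\mathcal{G}))$ for $0 \le i < n$. It is enough to construct, for each such $i$, a section $\sigma_i$ of the projection $\Fil^i \twoheadrightarrow \Fil^i/\Fil^{i+1}$: a descending induction along the filtration will then assemble these into the stated decomposition. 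The essential observation is that the maps $\widehat{\mathcal{G}} \hookrightarrow \mathcal{G}$ and $\log \colon \widehat{\mathcal{G}} \to \mathfrak{g}$ induce isomorphisms on $R^j\nu_*(-)$ for every $j \ge 1$, since both $\mathcal{G}/\widehat{\mathcal{G}}$ and $\mathcal{G}[p^{\infty}]$ are $\nu$-acyclic by the approximation property (as in the proof of Proposition \ref{sheafified correspondence old school}). Hence the Hodge--Tate filtrations on $H_v^n(X, \mathcal{G})$, $H_v^n(X, \widehat{\mathcal{G}})$ and $H_v^n(X, \mathfrak{g})$ share the same graded pieces $\Fil^i/\Fil^{i+1}$ for $i < n$, and the maps $\log_*$ and $\iota_* = (\widehat{\mathcal{G}} \hookrightarrow \mathcal{G})_*$ act as the identity on them.

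I therefore set $\sigma_i := \iota_* \circ \Exp_* \circ s_{\X}^i$, where $s_{\X}^i \colon H^i(X, \widetilde{\Omega}_X^{n-i} \otimes \Lie(\mathcal{G})) \hookrightarrow \Fil^i H_v^n(X, \mathfrak{g})$ is the section of the additive Hodge--Tate spectral sequence supplied by Proposition \ref{prop: degeneration of additive Hodge--Tate spectral sequence} applied to the vector bundle $E = \Lie(\mathcal{G})$, and $\Exp_* \colon H_v^n(X, \mathfrak{g}) \to H_v^n(X, \widehat{\mathcal{G}})$ is a section of $\log_*$ built from the exponential for $K$ as follows. By Proposition \ref{Prop: main properties of diamantine higher direct images}, the sheaf $\BBun_{\widehat{\mathcal{G}}, v}^n$ sits in an extension of the vector bundle $H_v^n(X, \mathfrak{g}) \otimes \G_a$ by the étale $p^{\infty}$-torsion group $\BBun_{\mathcal{G}[p^{\infty}], \et}^n$, so it is representable by an admissible locally $p$-divisible rigid group $\mathbf{H}$ over $K$ with $\Lie(\mathbf{H}) = H_v^n(X, \mathfrak{g})$. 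Proposition \ref{exponential on K points of locally p-divisible group} applied to $\mathbf{H}$ then yields a continuous section $\Exp_{\mathbf{H}} \colon \Lie(\mathbf{H}) \to \widehat{\mathbf{H}}(K) = H_v^n(X, \widehat{\mathcal{G}})$ of the logarithm, natural in $\mathcal{G}$ and independent of $\X$.

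The main technical point will be filtration compatibility: that $\Exp_*$ carries $\Fil^i H_v^n(X, \mathfrak{g})$ into $\Fil^i H_v^n(X, \widehat{\mathcal{G}})$, which is what ensures that $\sigma_i$ lands in $\Fil^i H_v^n(X, \mathcal{G})$ to begin with. I would obtain this by running the construction geometrically: the geometric Hodge--Tate spectral sequence (Theorem \ref{thm: geometric Hodge--Tate spectral sequence for general G degenerates}, together with Remark \ref{Remark: locally split filtrations}) equips $\BBun_{\widehat{\mathcal{G}}, v}^n$ with a filtration by rigid subgroups whose associated graded matches that of $\BBun_{\mathfrak{g}, v}^n$, and naturality of the rigid-group exponential applied to these subgroups produces the required compatibility. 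Granting this, $\sigma_i$ is a section of $\Fil^i \twoheadrightarrow \Fil^i/\Fil^{i+1}$ because $s_{\X}^i$ is one and both $\Exp_*$ and $\iota_*$ act as the identity on the graded piece by the key observation above. Naturality in $\X$ (resp.\ in $\mathcal{G}$) is inherited from that of $s_{\X}^i$ (resp.\ of the exponential), concluding the proof.
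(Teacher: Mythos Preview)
Your proposal is correct and follows essentially the same strategy as the paper: pass from $\Gs$ to $\widehat{\Gs}$ to $\ga$, use the lift $\X$ to split the additive spectral sequence, use the exponential for $K$ to produce a section of $\log_*$ on cohomology, and push forward along $\widehat{\Gs}\hookrightarrow\Gs$. The only difference is in how filtration compatibility of the exponential section is obtained: the paper applies Proposition~\ref{representability of diamantine higher direct image for topologically torsion groups}(2) at the bottom step $\Fil^n$ and then extends by a downward induction using that each square
\[
\begin{tikzcd}
\Fil^{i+1}H_v^n(X,\widehat{\Gs}) \arrow[r,hook] \arrow[d,"\log"'] & \Fil^{i}H_v^n(X,\widehat{\Gs}) \arrow[d,"\log"] \\
\Fil^{i+1}H_v^n(X,\ga) \arrow[r,hook] & \Fil^{i}H_v^n(X,\ga)
\end{tikzcd}
\]
is a pushout, whereas you apply the exponential to the full group $\mathbf{H}=\BBun_{\widehat{\Gs},v}^n$ and restrict via naturality in $G$; for your version to go through you should note that each $\FFil^i\BBun_{\widehat{\Gs},v}^n=\log_{\mathbf{H}}^{-1}(\FFil^i\BBun_{\ga,v}^n)$ is itself an admissible locally $p$-divisible rigid group (by Lemma~\ref{extension by étale rigid group}), so that Proposition~\ref{exponential on K points of locally p-divisible group} and its naturality apply.
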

To this end, we will use the results of the previous section to prove the representability of the diamantine higher direct images $\BBun_{\widehat{\Gs},\et}^n$. We start with the following lemma about extensions of smooth relative groups, generalizing \cite[Prop. 18]{Farg19}.
\begin{lemma}\label{extension by étale rigid group}
Let $S$ be a good adic space over $\Q_p$. Consider a short exact sequence of abelian sheaves on $\Perf_{S,v}$
\[\begin{tikzcd}
	0 & \Gamma & \Gs & {\mathbf{V}} & {0,}
	\arrow[from=1-1, to=1-2]
	\arrow[from=1-2, to=1-3]
	\arrow["f", from=1-3, to=1-4]
	\arrow[from=1-4, to=1-5]
\end{tikzcd}\]
where $\mathbf{V}$ is an étale vector bundle on $S$ and $\Gamma$ is an étale separated $S$-group of $p^{\infty}$-torsion. Then the sheaf $\Gs$ is represented by a locally $p$-divisible $S$-group, $\Gs= \widehat{\Gs}$ and the following diagram commutes
\[\begin{tikzcd}
	& \ga \\
	\Gs \\
	& {\mathbf{V}.}
	\arrow["{D(f)}", from=1-2, to=3-2]
	\arrow["\cong"', from=1-2, to=3-2]
	\arrow["{\log_{\Gs}}", from=2-1, to=1-2]
	\arrow["f"', from=2-1, to=3-2]
\end{tikzcd}\]
If we suppose furthermore that $[p]\colon \Gamma \rightarrow \Gamma$ is finite étale surjective, then $\Gs$ is an analytic $p$-divisible $S$-group.
\end{lemma}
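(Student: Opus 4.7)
The plan is to first establish representability of $\Gs$ as a smooth $S$-group \'etale over $\V$, then identify $f$ with the logarithm up to the canonical isomorphism $D(f) \colon \Lie(\Gs) \xrightarrow{\sim} \Lie(\V) \cong \V$, and finally upgrade the open immersion $\widehat{\Gs} \hookrightarrow \Gs$ to an equality. For representability, I would view the sequence as asserting that $f \colon \Gs \to \V$ is a $\Gamma$-torsor in abelian $v$-sheaves on $\V$. Since $\Gamma$ is an \'etale separated $S$-group, it has the approximation property by Example \ref{Ex: sheaves with the apporixation property}, and Proposition \ref{Prop: main known results about approximating sheaves} gives $H_v^1(\V, \Gamma) = H_{\et}^1(\V, \Gamma)$. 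So the torsor trivializes \'etale-locally on $\V$ and \'etale descent for adic spaces produces $\Gs$ as a smooth $S$-group with $f$ \'etale; in particular $D(f)$ is an isomorphism, and $\Lie(\V)$ is canonically the underlying vector bundle of $\V$.

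Next, I would consider the composition $L \coloneqq D(f)^{-1} \circ f \colon \Gs \to \ga$. This is an \'etale surjective homomorphism of smooth $S$-groups with kernel $\Gamma$ and with derivative $\id_{\Lie(\Gs)}$ at the identity. Since $\V$ is $p$-torsion-free, $\Gs[p] = \Gamma[p]$ is \'etale over $S$, so $[p] \colon \Gs \to \Gs$ is \'etale and Lemma \ref{statements about p-topological torsion subgroup} applies. The restriction $L|_{\widehat{\Gs}}$ is then a homomorphism to $\ga$ with derivative $\id$, and the uniqueness clause of Lemma \ref{statements about p-topological torsion subgroup}.(2) forces $L|_{\widehat{\Gs}} = \log_{\Gs}$. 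This gives the commutative triangle in the statement.

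The key remaining step, which I expect to be the main obstacle, is proving $\widehat{\Gs} = \Gs$. My plan is to exploit that $\widehat{\Gamma} = \Gamma$ (\'etale $p^{\infty}$-torsion is automatically topologically $p$-torsion) and $\widehat{\V} = \V$ (every section of a vector bundle is topologically $p$-torsion, by extending the $\Z$-module structure through $\Z_p \to K$). The restriction of $f$ to the identity component $\Gs^{\circ}$ has kernel $\Gs^{\circ} \cap \Gamma$, which is both \'etale over $S$ and contained in the connected group $\Gs^{\circ}$, hence zero; so $f \colon \Gs^{\circ} \xrightarrow{\sim} \V$. In particular $\V \cong \Gs^{\circ} \subseteq \widehat{\Gs}$, while $\Gamma \subseteq \widehat{\Gs}$ is automatic, and the decomposition $\Gs = \Gs^{\circ} + \Gamma$ (obtained by lifting any $t \in \Gs$ to $\Gs^{\circ}$ via $f|_{\Gs^{\circ}}^{-1}$) yields $\Gs = \widehat{\Gs}$. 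Combined with the previous step, this shows $\Gs$ is locally $p$-divisible. In the relative setting the identity component argument must be carried out fiberwise, which is the technically delicate point.

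Finally, for the analytic $p$-divisible case, assume $[p] \colon \Gamma \to \Gamma$ is finite \'etale surjective and apply the snake lemma to
\begin{equation*}
\begin{tikzcd}
    0 & \Gamma & \Gs & \V & 0 \\
    0 & \Gamma & \Gs & \V & 0
    \arrow[from=1-1, to=1-2]
    \arrow[from=1-2, to=1-3]
    \arrow[from=1-3, to=1-4]
    \arrow[from=1-4, to=1-5]
    \arrow["{[p]}", from=1-2, to=2-2]
    \arrow["{[p]}", from=1-3, to=2-3]
    \arrow["{[p]}", from=1-4, to=2-4]
    \arrow[from=2-1, to=2-2]
    \arrow[from=2-2, to=2-3]
    \arrow[from=2-3, to=2-4]
    \arrow[from=2-4, to=2-5]
\end{tikzcd}
\end{equation*}
noting that $[p]$ on $\V$ is an isomorphism since $p$ is a unit in $K$. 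The middle column is then surjective with kernel $\Gs[p] = \Gamma[p]$, which is finite \'etale over $S$. Thus $[p]$ on $\Gs$ is finite \'etale surjective, and $\Gs$ is analytic $p$-divisible.
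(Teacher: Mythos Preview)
Your representability argument and the identification of $f$ with $\log_{\Gs}$ via the uniqueness clause of Lemma~\ref{statements about p-topological torsion subgroup}.(2) are essentially the same as the paper's, and the snake-lemma argument for the analytic $p$-divisible case is fine.

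The genuine gap is in your argument for $\widehat{\Gs} = \Gs$. The claim that $\Gs^{\circ} \cap \Gamma = 0$ because it is ``\'etale over $S$ and contained in the connected group $\Gs^{\circ}$'' is false. Being \'etale over $S$ does not mean being open in $\Gs^{\circ}$; an \'etale $S$-group can sit as a closed, nowhere-open subgroup of a connected rigid group. The basic counterexample is already the prototype for this lemma: take $S = \Spa(K)$, $\Gamma = \mu_{p^{\infty}}$, $\V = \G_a$, $\Gs = \widehat{\G}_m$ with $f = \log$. Then $\widehat{\G}_m$ is connected (an open disc around $1$), so $\Gs^{\circ} = \Gs$, yet $\Gs^{\circ} \cap \Gamma = \mu_{p^{\infty}} \neq 0$. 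Consequently $f|_{\Gs^{\circ}}$ is not an isomorphism and the decomposition $\Gs = \Gs^{\circ} + \Gamma$ with $\Gs^{\circ} \cong \V$ does not exist.

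The paper's argument is genuinely different at this step: it works locally so that $\Lie(\Gs)$ is trivial, uses the $p$-adic exponential (Lemma~\ref{lemma: exponential converges}) to find an open ball $U \subset \V$ over which $f$ admits a section, hence $f^{-1}(U) \cong \Gamma \times U$, and observes that $\widehat{\Gamma \times U} = \Gamma \times U$. Then for any section $g \in \Gs(Y)$ one has $f([p^n]g) \in U$ for $n \gg 0$, so $[p^n]g \in \widehat{\Gs}$; since $\Gs/\widehat{\Gs}$ is $p$-torsion-free this forces $g \in \widehat{\Gs}$. The key point you are missing is that one cannot globally split the extension, only over a small open subgroup of $\V$, and one then uses multiplication by $p$ to contract everything into that subgroup.
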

\begin{proof}
Let $v\colon Y \rightarrow \mathbf{V}$ be any $v$-cover from a perfectoid space. By assumption, there exists a $v$-cover $Y' \rightarrow Y$ and a section $g\in \Gs(Y')$ such that $f(g)=v\restr{Y'}$. Then $\Gs \times_{\mathbf{V}} Y' \cong \Gamma \times_S Y'$ is represented by an étale separated perfectoid space over $Y'$. By \cite[Prop. 10.11(iv)]{scholze2022etale}, $\Gs \rightarrow \mathbf{V}$ is étale. 
We deduce, using \cite[Lemma 15.6]{scholze2022etale}, that $\Gs$ is represented by an object of the small étale site $\mathbf{V}_{\et}$ of $\mathbf{V}$ considered as an adic space. This shows that $\Gs$ is represented by a smooth $S$-group. 

Next, observe that the composition 
\[h\colon \Gs \xrightarrow{f} \mathbf{V} \xrightarrow{D(f)^{-1}} \ga\]
is an étale morphism with $D(h) = \id$. By the unicity part of Lemma \ref{statements about p-topological torsion subgroup}(1), we deduce that $h\restr{\widehat{\Gs}}=\log_{\Gs}$. 

Let us now show that $\Gs=\widehat{\Gs}$. This may be checked locally, such that we may assume that $S$ is affinoid and $\mathbf{V}$ is a trivial vector bundle. Let $Y$ be an affinoid perfectoid space over $S$ and let $g \in \Gs(Y)$. It suffices to show that $[p^n]g \in \widehat{\Gs}(Y)$ for some $n\geq 0$, since the cokernel $\cj{\Gs}$ of the inclusion $\widehat{\Gs} \hookrightarrow \Gs$ is $p$-torsionfree, by \cite[Lemma 2.11.2]{heuer2022geometric}. Let $U \cong \B^n \sub \mathbf{V}$ denote an open subspace isomorphic to a closed ball where $\log_{\Gs}$, and hence also $f$ has a section, as in Lemma \ref{lemma: exponential converges}, see Remark \ref{rem: exponential on trivial vector bundles}. We have $f([p^n]g)=p^nf(g) \in U$ for large enough $n$. Note that we have $f^{-1}(U) \cong \Gamma \times U$ as rigid groups, such that
\[\widehat{f^{-1}(U)} = \widehat{U} \times \widehat{\Gamma} = U \times \Gamma =  f^{-1}(U). \]
 Since $[p^n]g \in \widehat{f^{-1}(U)} \subset \widehat{\Gs}$, this proves that $\Gs=\widehat{\Gs}$, as required. In particular, $\log_{\Gs} =h$ is étale and surjective, such that $\Gs$ is locally $p$-divisible.

For the last part of the statement, assume that $[p]\colon \Gamma \rightarrow \Gamma$ is surjective and finite étale. As $[p]\colon \mathbf{V} \rightarrow \mathbf{V}$ is an isomorphism because $\mathbf{V}$ is a vector bundle, the assumption implies that $[p]\colon \Gs \rightarrow \Gs$ is also a surjection with the kernel $\Gs[p] = \Gamma[p]$ being finite étale over $S$, which concludes the prooof.
\end{proof}

\begin{proposition}\label{representability of diamantine higher direct image for topologically torsion groups}
    Let $\pi\colon X \rightarrow S$ be a proper smooth morphism of seminormal rigid spaces over $K$ and let $\Gs \rightarrow X$ be an admissible locally $p$-divisible group. Assume that we are in one of the following situations:
    \begin{itemize}
        \item $S=\Spa(K,K^+)$, or
        \item The Lie algebra $\ga$ comes from a vector bundle over $S$ and $\widehat{\Gs} \xrightarrow{[p]}\widehat{\Gs}$ is finite étale surjective.
    \end{itemize}
    \begin{enumerate}
        \item For any $n\geq 0$, the sheaf $\BBun_{\widehat{\Gs},\et}^n$ is representable by an admissible locally $p$-divisible $S$-group of topological $p$-torsion, and the sequence (\ref{geometric cohomological log exact sequence})
\[\begin{tikzcd}
	0 & {\BBun_{\Gs[p^{\infty}],\et}^{n}} & {\BBun_{\widehat{\Gs},\et}^{n}} & {\BBun_{\ga,\et}^{n}} & 0
	\arrow[from=1-1, to=1-2]
	\arrow[from=1-2, to=1-3]
	\arrow["\log", from=1-3, to=1-4]
	\arrow[from=1-4, to=1-5]
\end{tikzcd}\]
coincides with the logarithm sequence for this group. If $S=\Spa(K,K^+)$, the analogous statement holds for $\BBun_{\widehat{\Gs},v}^n$.
        \item Assume that $S=\Spa(K,K^+)$ with $K$ algebraically closed. A choice of exponential for $K$ induces a splitting on $K$-points of the sequence (\ref{geometric cohomological log exact sequence}), for $\tau \in \{\et,v\}$
\begin{equation}\label{eq: split log exact sequence}\begin{tikzcd}
	0 & {H_{\et}^n(X,\Gs[p^{\infty}])} & {H_{\tau}^n(X,\widehat{\Gs})} & {H_{\tau}^{n}(X,\ga)} & {0,}
	\arrow[from=1-1, to=1-2]
	\arrow[from=1-2, to=1-3]
	\arrow["\log"', shift right, from=1-3, to=1-4]
	\arrow["{s_{\exp}}"', shift right, dashed, from=1-4, to=1-3]
	\arrow[from=1-4, to=1-5]
\end{tikzcd}\end{equation}
that is natural in $\Gs$.
    \end{enumerate}
\end{proposition}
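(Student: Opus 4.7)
The strategy is to apply Lemma \ref{extension by étale rigid group} to the short exact sequence \eqref{geometric cohomological log exact sequence} from Proposition \ref{Prop: main properties of diamantine higher direct images}.(3). Under either of our hypotheses on $(S,\Gs)$, both outer terms meet the requirements of that lemma: Proposition \ref{Prop: main properties of diamantine higher direct images}.(1) exhibits $\BBun_{\ga,\tau}^n$ as an étale vector bundle on $S$ (equal to $H^n_\tau(X,\ga)\otimes \G_a$ when $S=\Spa(K,K^+)$), while Proposition \ref{Prop: main properties of diamantine higher direct images}.(2) represents each $\BBun_{\Gs[p^m],\et}^n$ by a finite étale $S$-group (using, in the second case, that $\Gs[p^m] = \widehat{\Gs}[p^m]\to X$ is finite étale). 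Taking the filtered colimit exhibits $\BBun_{\Gs[p^\infty],\et}^n$ as a separated étale $S$-group of $p^\infty$-torsion.

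Applying Lemma \ref{extension by étale rigid group} to \eqref{geometric cohomological log exact sequence} then identifies $H \coloneqq \BBun_{\widehat{\Gs},\tau}^n$ with a locally $p$-divisible $S$-group satisfying $H=\widehat{H}$; this covers $\tau=\et$ under both hypotheses, and $\tau=v$ when $S=\Spa(K,K^+)$. The commutative triangle in that lemma shows that the boundary map $\log_*$ factors as $\log_H$ composed with the derivative isomorphism $D(\log_*)\colon \Lie(H)\xrightarrow{\cong}\BBun_{\ga,\tau}^n$, thereby identifying \eqref{geometric cohomological log exact sequence} with the intrinsic logarithm sequence of $H$. Under the second hypothesis, the extra condition that $[p]\colon \widehat{\Gs}\to \widehat{\Gs}$ is finite étale surjective triggers the last part of Lemma \ref{extension by étale rigid group} and promotes $H$ to an analytic $p$-divisible $S$-group. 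Admissibility of $H$ is then immediate: $\cj{H}$ vanishes tautologically since $H=\widehat{H}$, while $H[p^m]=\BBun_{\Gs[p^m],\et}^n$ is Zariski-constructible by Proposition \ref{Prop: main properties of diamantine higher direct images}.(2).

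For part (2), we specialize to $S=\Spa(K,K^+)$ with $K$ algebraically closed, so part (1) realizes $H$ as an admissible locally $p$-divisible rigid group over $K$ with Lie algebra $H^n_\tau(X,\ga)$. Proposition \ref{exponential on K points of locally p-divisible group} then associates, to a choice of exponential for $K$, a natural continuous section
\[ \Exp_H \colon H^n_\tau(X,\ga) = \Lie(H) \longrightarrow H(K) = H^n_\tau(X,\widehat{\Gs}) \]
of $\log_H$. By the identification of \eqref{geometric cohomological log exact sequence} with the logarithm sequence of $H$ established in the previous step, $\Exp_H$ provides precisely the splitting $s_{\exp}$ of \eqref{eq: split log exact sequence}. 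Naturality in $\Gs$ is inherited from the functoriality of the representability statement in part (1) together with the naturality of $\Exp_{(-)}$ in Proposition \ref{exponential on K points of locally p-divisible group}.

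The main technical obstacle is ensuring uniform applicability of Lemma \ref{extension by étale rigid group} across both base cases: one must verify that the hypothesis $[p]\colon \widehat{\Gs}\to \widehat{\Gs}$ finite étale surjective does force the representability of each $\BBun_{\Gs[p^m],\et}^n$ by a finite étale $S$-group through Proposition \ref{Prop: main properties of diamantine higher direct images}.(2), and that the snake lemma applied to multiplication by $p^m$ genuinely yields $H[p^m]=\BBun_{\Gs[p^m],\et}^n$—which it does because the vector bundle $\BBun_{\ga,\tau}^n$ is uniquely $p^m$-divisible. A secondary point is that in the $v$-variant for $S=\Spa(K,K^+)$ the vector bundle $\BBun_{\ga,v}^n$ is étale, which is visible from its explicit form $H^n_v(X,\ga)\otimes \G_a$.
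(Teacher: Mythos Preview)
Your overall plan matches the paper's: feed the short exact sequence \eqref{geometric cohomological log exact sequence} into Lemma \ref{extension by étale rigid group}, then read off representability, the identification with the logarithm sequence, and the exponential splitting via Proposition \ref{exponential on K points of locally p-divisible group}. However, you skip the one nontrivial step that actually makes this work in the étale case.

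Lemma \ref{extension by étale rigid group} takes as input a short exact sequence of abelian sheaves on $\Perf_{S,v}$. The sequence \eqref{geometric cohomological log exact sequence} with $\tau=\et$ from Proposition \ref{Prop: main properties of diamantine higher direct images}.(3) is only stated as an exact sequence on $\Perf_{S,\et}$. The outer terms are $v$-sheaves (an étale vector bundle and an étale $S$-group), but there is no a priori reason why the middle term $\BBun_{\widehat{\Gs},\et}^n$ should be a $v$-sheaf, and without this the lemma does not apply. If you pass to $v$-sheafification you only prove representability of the sheafification, not of $\BBun_{\widehat{\Gs},\et}^n$ itself. The paper supplies exactly this missing ingredient: it invokes the degeneration of the geometric Hodge--Tate spectral sequence for $\widehat{\Gs}$ (Theorem \ref{thm: geometric Hodge--Tate spectral sequence for general G degenerates}) together with Lemma \ref{possible simplification of the abutment} to identify $\BBun_{\widehat{\Gs},\et}^n$ with the bottom filtration step $\FFil^n$ of $\BBun_{\widehat{\Gs},v}^n$, and then argues by descending induction through the filtration (each graded piece being a vector bundle by Proposition \ref{coherent base change of omega tilde}) that every $\FFil^i$, and in particular $\FFil^n=\BBun_{\widehat{\Gs},\et}^n$, is a $v$-sheaf. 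Only then is Lemma \ref{extension by étale rigid group} applicable. Your proposal needs this argument; the rest of what you wrote is fine and essentially coincides with the paper's treatment.
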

\begin{proof}
Let us prove the first part. We apply Lemma \ref{extension by étale rigid group} to the exact sequence (\ref{geometric cohomological log exact sequence}). This is possible, since $\BBun_{\widehat{\Gs},\et }$ is a $v$-sheaf, by Remark \ref{Remark: BunGet is a v-sheaf}. In the case $S=\Spa(K,K^+)$ and $\tau=v$, Lemma \ref{extension by étale rigid group} also applies to the exact sequence (\ref{geometric cohomological log exact sequence}), as $\BBun_{\ga,v}^n = H_v^n(X,\ga) \otimes \G_a$ is an étale vector bundle, by Proposition \ref{Prop: main properties of diamantine higher direct images}(1). This proves the first point. The second point follows from Proposition \ref{exponential on K points of locally p-divisible group}. 
\end{proof}

\begin{proof}[Proof of Theorem \ref{Theorem: The classical sseq splits}]
    We have a commutative diagram
\[\begin{tikzcd}
	{\Fil^nH_{v}^n(X,\widehat{\Gs})} & {H_{\et}^n(X,\widehat{\Gs})} \\
	{\Fil^nH_{v}^n(X,\ga)} & {H_{\et}^n(X,\ga).}
	\arrow["\cong", from=1-1, to=1-2]
	\arrow["\log"', from=1-1, to=2-1]
	\arrow["\log"', shift right, from=1-2, to=2-2]
	\arrow["\cong"', from=2-1, to=2-2]
	\arrow["{s_{\exp}}"', shift right, dashed, from=2-2, to=1-2]
\end{tikzcd}\]
Proposition \ref{representability of diamantine higher direct image for topologically torsion groups} provides the splitting $s_{\exp}$ and hence yields a splitting $s_n$ of the left vertical map. For general $i<n$, we have a commutative diagram
\[\begin{tikzcd}
	0 & {\Fil^{i+1}H_{v}^n(X,\widehat{\Gs})} & {\Fil^iH_{v}^n(X,\widehat{\Gs})} & {H^i(X,\widetilde{\Omega}_X^{n-i}\otimes \Lie(\Gs))} & 0 \\
	0 & {\Fil^{i+1}H_{v}^n(X,\ga)} & {\Fil^iH_{v}^n(X,\ga)} & {H^i(X,\widetilde{\Omega}_X^{n-i}\otimes \Lie(\Gs))} & {0.}
	\arrow[from=1-1, to=1-2]
	\arrow[from=1-2, to=1-3]
	\arrow["\log"', shift right, from=1-2, to=2-2]
	\arrow[from=1-3, to=1-4]
	\arrow["\log"', from=1-3, to=2-3]
	\arrow[from=1-4, to=1-5]
	\arrow["\cong", from=1-4, to=2-4]
	\arrow[from=2-1, to=2-2]
	\arrow["{s_{i+1}}"', shift right, dashed, from=2-2, to=1-2]
	\arrow[from=2-2, to=2-3]
	\arrow[shift right, from=2-3, to=2-4]
	\arrow["{s_{\X}}"', shift right, dashed, from=2-4, to=2-3]
	\arrow[from=2-4, to=2-5]
\end{tikzcd}\]
By descending induction on $i$, we have a section $s_{i+1}$ splitting the left vertical map. Since the left square above is a pushout square, we obtain a section $s_i$ of the middle vertical map. By Proposition \ref{prop: degeneration of additive Hodge--Tate spectral sequence}, the lift $\X$ induces a splitting $s_{\X}$ of the Hodge--Tate spectral sequence with $\ga$-coefficients. We may precompose $s_i$ with $s_{\X}$ and post-compose it with the natural map
\[ \Fil^iH_v^n(X,\widehat{\Gs}) \rightarrow \Fil^iH_v^n(X,\Gs).\]
This yields a map $H^i(X,\widetilde{\Omega}_X^{n-i} \otimes \Lie(\Gs)) \rightarrow \Fil^iH_v^n(X,\Gs)$ that splits the Hodge--Tate spectral sequence with $\Gs$-coefficients, as required.
\end{proof}

\subsection{Cohomology with coefficients in $p$-adic universal covers}
In this section, we prove a Hodge--Tate decomposition with coefficients in the $p$-adic universal cover $\widetilde{\Gs}$ of locally $p$-divisible groups.
\begin{definition}
    Let $X$ be a good adic space over $\Q_p$ and let $\Gs$ be an admissible locally $p$-divisible $X$-group. Assume that $[p]\colon \Gs \rightarrow \Gs$ is finite étale surjective. We define the $p$-adic universal cover of $\Gs$ to be
    \[ \widetilde{\Gs} = \varprojlim_{[p]}\Gs,\]
    viewed as a group diamond over $X$. It fits in a short exact sequence of sheaves on $X_v$
\begin{equation}\begin{tikzcd}
	0 & {T_p\Gs} & {\widetilde{\Gs}} & \Gs & {0,}
	\arrow[from=1-1, to=1-2]
	\arrow[from=1-2, to=1-3]
	\arrow[from=1-3, to=1-4]
	\arrow[from=1-4, to=1-5]
\end{tikzcd}\end{equation}
    where $T_p\Gs = \varprojlim_m \Gs[p^m]$ is the Tate module of $\Gs$.
\end{definition}
\begin{remark}
   If $\Gs = \widehat{\Gs}$ is analytic $p$-divisible, we further have a short exact sequence of sheaves on $X_v$
\begin{equation}\label{eq: log for BC space}\begin{tikzcd}
	0 & {V_p\Gs} & {\widetilde{\Gs}} & \ga & {0,}
	\arrow[from=1-1, to=1-2]
	\arrow[from=1-2, to=1-3]
	\arrow["\log", from=1-3, to=1-4]
	\arrow[from=1-4, to=1-5]
\end{tikzcd}\end{equation}
where $V_p\Gs = T_p\Gs[\tfrac{1}{p}]$ is the rational Tate module of $\Gs$. In particular, $\widetilde{\Gs}$ then is an \emph{effective} Banach--Colmez space \cite[§4]{Fontaine03}. 
\end{remark}
\begin{example}
    Let $\Gs= \widehat{\G}_m$. Then by \cite[Prop. II.2.2]{fargues2024geometrization}, we have $\widetilde{\widehat{\G}}_m= \B^{\varphi=p}$, where $\B$ is the $v$-sheaf on $X$ defined by the formula
    \[ \B \colon S \in \Perf_X \longmapsto H^0(Y_S^{\FF},\Os_{Y_S^{\FF}}),\]
    and the sequence (\ref{eq: log for BC space}) coincides with the fundamental exact sequence of $p$-adic Hodge theory
\begin{equation}\label{eq: fund eq of p-adic HT}\begin{tikzcd}
	0 & {\Q_p(1)} & {\B^{\varphi=p}} & {\G_a} & {0.}
	\arrow[from=1-1, to=1-2]
	\arrow[from=1-2, to=1-3]
	\arrow["\theta", from=1-3, to=1-4]
	\arrow[from=1-4, to=1-5]
\end{tikzcd}\end{equation}
\end{example}

We obtain the following result.
\begin{thm}\label{thm: HT decomposition for BC space}
    Let $X$ be a proper smooth rigid space over an algebraically closed non-archimedean field $(K,K^+)$ over $\Q_p$. Let $\Gs$ be an admissible locally $p$-divisible $X$-group with $[p]\colon \Gs \rightarrow \Gs$ finite étale surjective. Then there is a spectral sequence, natural in $X$ and $\Gs$
    \begin{equation}\label{eq: sseq for BC spaces}
 E_2^{ij}= \left\{\begin{aligned}
  &H_{\et}^i(X,\widetilde{\Omega}_X^j \otimes_{\Os_X} \Lie(\Gs)) \quad &\text{if }j>0\\
  &{\varprojlim}_{[p]}H_{\et}^i(X,\Gs) \quad &\text{if }j=0
\end{aligned}\right\}  \Longrightarrow H_{v}^{i+j}(X,\widetilde{\Gs}),
\end{equation}
that lies over the Hodge--Tate spectral sequence with $\Gs$-coefficients (\ref{classical Hodge--Tate sequence for G reformulated}). Moreover, this spectral sequence degenerates at $E_2$. A choice of exponential for $K$ and of a flat lift $\X$ to $\BdR+/\xi^2$ induce a decomposition, natural in $\Gs$ and $\X$
    \begin{align}\label{eq: Hodge--Tate decomp for BC space}
        H_{v}^n(X,\widetilde{\Gs}) = \varprojlim_{[p]} H_{\et}^n(X,\Gs) \oplus \bigoplus_{i=0}^{n-1} H^i(X,\widetilde{\Omega}_X^{n-i}\otimes_{\Os_X}\Lie(\Gs)).
    \end{align}
\end{thm}
\begin{proof}
The spectral sequence is obtained by taking the inverse limit over $[p]$ of the spectral sequence (\ref{classical Hodge--Tate sequence for G reformulated}), which then is immediately seen to degenerate at $E_2$, by Corollary \ref{Cor: The classical sseq degenerates}. We claim that we have the following formula for the limiting term
\[ \varprojlim_{[p]}H_v^n(X,\Gs) = H_v^n(X,\widetilde{\Gs}).\]
As $\Sh(X_v)$ is a replete topos, we have
\[ \widetilde{\Gs} = \Rlim_{[p]} \Gs.\]
Hence
\[ \RGamma_v(X,\widetilde{\Gs}) = \Rlim_{[p]} \RGamma_v(X,\Gs).\]
It remains to show that
\[ R^1\varprojlim_{[p]} H_v^n(X,\Gs)=0,\]
for each $n\geq 0$. We have a short exact sequence on $X_v$
\[\begin{tikzcd}
	0 & {\widetilde{\widehat{\Gs}}} & {\widetilde{\Gs}} & {\cj{\Gs}} & {0.}
	\arrow[from=1-1, to=1-2]
	\arrow[from=1-2, to=1-3]
	\arrow[from=1-3, to=1-4]
	\arrow[from=1-4, to=1-5]
\end{tikzcd}\]
We end up with the following commutative diagram where the rows and the middle column are exact
\[\begin{tikzcd}
	& \ldots & \ldots & \ldots \\
	0 & {R^1\varprojlim_{[p]}H^{n-1}(X,\widehat{\Gs})} & {H_v^n(X,\widetilde{\widehat{\Gs}})} & {\varprojlim_{[p]} H_v^n(X,\widehat{\Gs})} & 0 \\
	0 & {R^1\varprojlim_{[p]}H^{n-1}(X,\Gs)} & {H_v^n(X,\widetilde{\Gs})} & {\varprojlim_{[p]} H_v^n(X,\Gs)} & 0 \\
	0 & 0 & {H_v^n(X,\cj{\Gs})} & {\varprojlim_{[p]}H_v^n(X,\cj{\Gs})} & {0.} \\
	& \ldots & \ldots & \ldots
	\arrow[from=1-2, to=2-2]
	\arrow[from=1-3, to=2-3]
	\arrow[from=1-4, to=2-4]
	\arrow[from=2-1, to=2-2]
	\arrow[from=2-2, to=2-3]
	\arrow[from=2-2, to=3-2]
	\arrow[from=2-3, to=2-4]
	\arrow[from=2-3, to=3-3]
	\arrow[from=2-4, to=2-5]
	\arrow[from=2-4, to=3-4]
	\arrow[from=3-1, to=3-2]
	\arrow[from=3-2, to=3-3]
	\arrow[from=3-2, to=4-2]
	\arrow[from=3-3, to=3-4]
	\arrow[from=3-3, to=4-3]
	\arrow[from=3-4, to=3-5]
	\arrow[from=3-4, to=4-4]
	\arrow[from=4-1, to=4-2]
	\arrow[from=4-2, to=4-3]
	\arrow[from=4-2, to=5-2]
	\arrow["\cong", from=4-3, to=4-4]
	\arrow[from=4-3, to=5-3]
	\arrow[from=4-4, to=4-5]
	\arrow[from=4-4, to=5-4]
\end{tikzcd}\]
Here, we use that $\cj{\Gs}$ is uniquely $p$-divisible, using our assumptions on $\Gs$ and \cite[Lemma 2.11.2]{heuer2022geometric}, such that $R^1\varprojlim_{[p]}H^{n}(X,\cj{\Gs})=0$. By applying the Snake Lemma to the above diagram, we are reduced to showing that
\[ R^1\varprojlim_{[p]} H_{v}^n(X,\widehat{\Gs})=0, \quad \fa n\geq 0.\]
Using the split short exact sequence (\ref{eq: split log exact sequence}), it is equivalent to show that
\[ R^1\varprojlim_{[p]} H_{v}^n(X,\Gs[p^{\infty}])=0, \quad \fa n\geq 0.\]
As $H_v^n(X,T_p\Gs)$ is a finitely generated $\Z_p$-module, by \cite[Thm. 5.1]{scholze2013padicHodge}, it follows that we can write
\[ H_v^n(X,\Gs[p^{\infty}]) = (\Q_p/\Z_p)^{\oplus r} \oplus T,\]
for some finite torsion group $T$. It follow that the system 
\[\begin{tikzcd}
	\ldots & {H_v^n(X,\Gs[p^{\infty}])} & {H_v^n(X,\Gs[p^{\infty}])} & \ldots
	\arrow["{[p]}", from=1-1, to=1-2]
	\arrow["{[p]}", from=1-2, to=1-3]
	\arrow["{[p]}", from=1-3, to=1-4]
\end{tikzcd}\]
is Mittag--Leffler, and the claim follows.
The rest of the statement now follows by applying $\varprojlim_{[p]}$ to the Hodge--Tate decomposition with $\Gs$-coefficients (Theorem \ref{Theorem: The classical sseq splits}).
\end{proof}

As an application, we obtain the following deformation of the Hodge--Tate decomposition along Fontaine's theta map.
\begin{cor}\label{cor: deformation of HT decomposition along theta map}
    Let $X$ be a proper smooth rigid space over a complete algebraically closed extension $K$ of $\Q_p$. Then a flat lift $\X$ to $\BdR+/\xi^2$ and an exponential for $K$ induces a decomposition, natural in $\X$
    \begin{align}
        H_{\et}^n(X,\Q_p) \otimes_{\Q_p}B^{\varphi =p} = \varprojlim_{[p]} H_{\et}^n(X,\widehat{\G}_m) \oplus \bigoplus_{i=0}^{n-1} H^i(X,\womega_X^{n-i}).
    \end{align}
This lies over the classical Hodge--Tate decomposition (\ref{eq: the classical Hodge--Tate decomposition}) via Fontaine's map $B^{\varphi=p} \xrightarrow{\theta} K$.
\end{cor}
\begin{proof}
    By Theorem \ref{thm: HT decomposition for BC space}, all that remains to be shown is the isomorphism
    \begin{align} H_{\et}^n(X,\Q_p) \otimes_{\Q_p}B^{\varphi =p} = H_v^n(X,\B^{\varphi =p}).\end{align}
    Using the short exact sequence $(\ref{eq: fund eq of p-adic HT})$ and the $5$-lemma, this follows from the primitive comparison Theorem \cite[Thm. 5.1]{scholze2013padicHodge}.
\end{proof}

\subsection{The case of abeloid varieties and curves}
Let $(K,K^+)$ be an algebraically closed non-archimedean field over $\Q_p$. Let $X$ be either an abeloid variety or a smooth connected proper curve of genus $g\geq 1$ over $K$, and let $x\in X(K)$. In this section, we reformulate the Hodge--Tate decomposition of $X$ with multiplicative coefficients in terms of continuous group cohomology of the étale fundamental group $\pi_1(X,x)$. 

\begin{definition}
    The universal pro-finite-étale cover $\widetilde{X}$ of $X$ is defined to be the diamond
    \[ \widetilde{X} = \varprojlim_{(X',x')} X',\]
    where the limit ranges over all connected finite étale covers $X' \rightarrow X$ together with a lift $x'\in X'(K)$ of $x$.
\end{definition}

The map $\widetilde{X} \rightarrow X$ is a pro-étale torsor under the profinite group $\pi_1(X,x)$. Under our assumptions on $X$, it is moreover known that $\widetilde{X}$ is representable by a qcqs perfectoid space, see \cite[Cor. 5.7-8]{HeuerWearBlakestad2018perfcover}.

\begin{lemma}\label{Lemma: cohomology of universal cover of abeloids and curves}
    For $\Fs$ any of the $v$-sheaves $\Z/n\Z, \Os^{+a}/p^m, \Os^{+a},\Os$, we have
    \begin{align*}
        H_v^i(\widetilde{X},\Fs) = 
        \begin{cases}
        \Fs(K) \quad &\text{if }i=0\\
        0\quad &\text{if }i>0.
    \end{cases} 
    \end{align*}
Here, the sheaf $\Os^{+a}$ is the almost integral structure sheaf, sending a perfectoid pair $(R,R^+)$ to $R^{+,a}$, where we use almost mathematics relative to $(\Os_K,\ma)$.
\end{lemma}
\begin{remark}
    If $X$ is an arbitrary connected, proper, seminormal rigid space, the lemma holds for $i=0,1$, see \cite[Prop. 4.9]{heuer2021line}. However, for $X=\Pro^n$, we have $\widetilde{X} = X$ and we immediately see that the statement need not hold for $i \geq 2$.
\end{remark}
\begin{proof}
    In the abeloid case, this follows from \cite[Prop. 4.2]{heuer2021lineonpefd} and its proof. For $X$ a curve, we proceed as follows. First, Proposition \ref{Prop: main known results about approximating sheaves}(3) gives us
    \[ H_v^i(\widetilde{X},\Z/n\Z) = H_{\et}^i(\widetilde{X},\Z/n\Z).\]
    By \cite[Prop. 14.9]{scholze2022etale}, we have
    \[ H_{\et}^i(\widetilde{X},\Z/n\Z) = \varinjlim_{(X',x')} H_{\et}^i(X',\Z/n\Z).\]
    The claim is thus clear for $i=0$. By Poincaré duality \cite[Thm. 1.1.1]{mann2022padic}, $H^i(X',\Z/n\Z) =0$ for $i>2$ and $H^2(X',\Z/n\Z) \cong H^0(X',\Z/n\Z) = \Z/n\Z$. Moreover, via this identification, the transition functions $f\colon X'' \rightarrow X'$ induce $[\deg(f)]$ on $\Z/n\Z$. It follows that the colimit above vanishes. Finally, for $i=1$, each class in $H^1(X',\Z/n\Z)$ represents an étale $\Z/n\Z$-torsor, and thus is trivialised by a finite étale cover $X' \rightarrow X$ (alternatively, one could invoke \cite[Prop. 3.9]{heuer2022geometric}). Thus the colimit above vanishes as well and the statement is proven for $\Fs=\Z/n\Z$. By the Primitive Comparison Theorem \cite[Thm. 5.1]{scholze2013padicHodge}, we also get 
    \[ H_v^i(\widetilde{X},\Os^+/p^m) \overset{a}{=} \varinjlim H_{\et}^i(X',\Os^+/p^m) \overset{a}{=} \varinjlim H_{\et}^i(X',\Z/p^m)\otimes K^+/p^m = \begin{cases}
         K^+/p^m \quad &\text{if }i=0\\
        0\quad &\text{if }i>0.
    \end{cases}\]
    Since $\Sh(X_v)$ is replete, it follows that
    \[ \RGamma_v(\widetilde{X},\Os^+) = \RGamma_v(\widetilde{X},\Rlim_m \Os^+/p^m) \overset{a}{=} \Rlim_m \RGamma_v(\widetilde{X},\Os^+/p^m) \overset{a}{=} K^+. \]
    Hence the statement follows for $\Fs= \Os^{+a},\Os$.
\end{proof}

We recall the definition of the topological torsion subgroup of $\G_m$ \cite[Def. 2.5]{heuer2022geometric}
\[ \G_m^{\ttt} = \Hom(\underline{\widehat{\Z}},\G_m) = \bigcup_{n\geq 1} \mu_n\cdot \widehat{\G}_m.\]

\begin{thm}\label{thm: case of abeloids and curves}
    Let $X$ be either an abeloid variety or a smooth connected proper curve of genus $g\geq 1$ over $K=\cj{K}$, and fix a point $x\in X(K)$. Then a choice of exponential for $K$ and of a flat lift $\X$ of $X$ to $\BdR+/\xi^2$ induces a decomposition, natural in $\X$, for any $n\geq 0$
    \begin{align}
        H_{\cts}^n(\pi_1(X,x),K^{\times}) = H_{\et}^n(X,\G_m^{\ttt}) \oplus \bigoplus_{i=0}^{n-1} H^i(X,\widetilde{\Omega}_X^{n-i}).
    \end{align}
\end{thm}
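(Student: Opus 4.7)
The strategy is to apply the Hodge--Tate decomposition of Theorem \ref{Theorem: The classical sseq splits} to the admissible locally $p$-divisible rigid group $\Gs = \G_m^{\ttt}$ (which has $\Lie(\G_m^{\ttt}) = K$, since $\widehat{\G}_m \subset \G_m^{\ttt}$ is an open subgroup), and then to identify the resulting $v$-cohomology with continuous $\pi_1$-cohomology via a Cartan--Leray argument for the universal pro-finite-étale cover $\widetilde{X} \to X$.

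Theorem \ref{Theorem: The classical sseq splits} applied to $\Gs = \G_m^{\ttt}$ immediately yields, naturally in $\X$,
\[ H_v^n(X,\G_m^{\ttt}) = H_{\et}^n(X,\G_m^{\ttt}) \oplus \bigoplus_{i=0}^{n-1} H^i(X,\widetilde{\Omega}_X^{n-i}).\]
It therefore suffices to produce an isomorphism $H_v^n(X,\G_m^{\ttt}) \cong H_{\cts}^n(\pi_1(X,x),K^{\times})$ for each $n\geq 1$. For this, I would use that $\widetilde{X} \to X$ is a pro-finite-étale $\pi_1(X,x)$-torsor and exploit the associated Cartan--Leray spectral sequence
\[ E_2^{ij} = H_{\cts}^i\bigl(\pi_1(X,x),\, H_v^j(\widetilde{X},\G_m^{\ttt})\bigr) \Longrightarrow H_v^{i+j}(X,\G_m^{\ttt}).\]
The input cohomology groups are computed as follows: the logarithm exact sequence $0 \to \mu_{p^{\infty}} \to \widehat{\G}_m \xrightarrow{\log} \G_a \to 0$ combined with Lemma \ref{Lemma: cohomology of universal cover of abeloids and curves} shows that $H_v^j(\widetilde{X},\widehat{\G}_m) = 1+\ma_K$ for $j=0$ and vanishes for $j\geq 1$. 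Then the étale quotient sequence $0 \to \widehat{\G}_m \to \G_m^{\ttt} \to \mu_{(p)} \to 0$, where $\mu_{(p)} = \varinjlim_{(n,p)=1}\mu_n$, upgrades this to $H_v^j(\widetilde{X},\G_m^{\ttt}) = \G_m^{\ttt}(K) = \Os_K^{\times}$ for $j=0$ and $0$ for $j\geq 1$, with the $\pi_1$-action being trivial. The spectral sequence therefore degenerates and
\[ H_v^n(X,\G_m^{\ttt}) = H_{\cts}^n\bigl(\pi_1(X,x), \Os_K^{\times}\bigr). \]

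To pass from $\Os_K^{\times}$ to $K^{\times}$, I would use the short exact sequence of trivial $\pi_1$-modules
\[ 0 \to \Os_K^{\times} \to K^{\times} \xrightarrow{\;\mathrm{val}\;} \Q \to 0,\]
where $\mathrm{val}$ is the $p$-adic valuation. Since $\pi_1(X,x)$ is profinite and $\Q$ is uniquely divisible with trivial action, $H_{\cts}^m(\pi_1,\Q) = 0$ for every $m\geq 1$, and thus the associated long exact sequence yields an isomorphism $H_{\cts}^n(\pi_1,\Os_K^{\times}) \cong H_{\cts}^n(\pi_1,K^{\times})$ in every degree $n\geq 1$, completing the proof.

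\textbf{The main obstacle} I expect lies in the second paragraph: one must justify the Cartan--Leray spectral sequence for the $v$-sheaf $\G_m^{\ttt}$ along the pro-étale $\pi_1(X,x)$-torsor $\widetilde{X} \to X$ with the correct continuity of the profinite group action on $H^j_v(\widetilde{X},\G_m^{\ttt})$, and one must carefully commute the colimit defining $\mu_{(p)}$ with $v$-cohomology of $\widetilde{X}$. Both are by now standard in the pro-étale/$v$-setting (and implicit e.g.\ in the proof of (\ref{eq: tt mult HT of degree 1}) recalled in the introduction), so the remaining steps are essentially bookkeeping.
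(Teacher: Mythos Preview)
Your approach is essentially the same as the paper's: apply Theorem~\ref{Theorem: The classical sseq splits} to $\G_m^{\ttt}$, then identify $H_v^n(X,\G_m^{\ttt})$ with continuous $\pi_1$-cohomology via Cartan--Leray using the vanishing of higher $v$-cohomology on $\widetilde X$. The paper does the vanishing step in one shot via the extended logarithm sequence $0\to\mu\to\G_m^{\ttt}\xrightarrow{\log}\G_a\to 0$ (with $\mu=\bigcup_n\mu_n$), rather than your two-step dévissage through $\widehat{\G}_m$ and $\mu_{(p)}$, but that is cosmetic.

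There is, however, a genuine slip in the final identification. Your claim $\G_m^{\ttt}(K)=\Os_K^{\times}$ is only correct when the residue field of $K$ is $\overline{\F_p}$: in general $\G_m^{\ttt}(K)=\mu(K)\cdot(1+\ma_K)$, and the quotient $\Os_K^{\times}/\G_m^{\ttt}(K)\cong k^{\times}/\mu(k)$ can be nontrivial for larger algebraically closed residue fields $k$. (Relatedly, writing the value group as ``$\Q$'' presumes rank one over $\Q$, which also fails in general.) The fix is immediate and is exactly what the paper does: use instead the sequence
\[
0\longrightarrow \G_m^{\ttt}(K)\longrightarrow K^{\times}\longrightarrow K^{\times}/\G_m^{\ttt}(K)\longrightarrow 0,
\]
observe that the quotient is discrete and uniquely divisible (it is an extension of the value group, a $\Q$-vector space, by $k^{\times}/\mu(k)$, which is uniquely divisible since $k$ is algebraically closed), and conclude $H_{\cts}^n(\pi_1,\G_m^{\ttt}(K))\cong H_{\cts}^n(\pi_1,K^{\times})$ for all $n\geq 1$ exactly as you argued.
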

\begin{proof}
    Let $\mu = \bigcup_{n\geq 1}\mu_n \sub \G_m$ denote the sheaf of roots of unity on $X_v$. The logarithm sequence extends to a short exact sequence on $X_v$ \cite[Lemma 6.7]{heuer2023padic}
\[\begin{tikzcd}
	0 & \mu & {\G_m^{\ttt}} & {\G_a} & {0.}
	\arrow[from=1-1, to=1-2]
	\arrow[from=1-2, to=1-3]
	\arrow["\log", from=1-3, to=1-4]
	\arrow[from=1-4, to=1-5]
\end{tikzcd}\]
    By Lemma \ref{Lemma: cohomology of universal cover of abeloids and curves}, we deduce that
    \[ H_v^n(\widetilde{X},\G_m^{\ttt})=0 \, , \quad \fa n>0.\]
    Hence the \v{C}ech-to-derived spectral sequence yields an isomorphism
    \[ H_v^n(X,\G_m^{\ttt}) = \check{H}^n(\{\widetilde{X}/X\},\G_m^{\ttt}) = H_{\cts}^n(\pi_1(X,x),\G_m^{\ttt}(\widetilde{X})),\]
    where $\G_m^{\ttt}(\widetilde{X})$ comes equipped with a natural topology \cite[Lemma 2.11]{heuer2023padic} and where we use \cite[Lemma 5.4]{heuer2023padic} for the second equality. Since $\Os(\widetilde{X}) = K$, it follows that $\G_m^{\ttt}(\widetilde{X}) = \G_m^{\ttt}(K)$, in particular it is a trivial $\pi_1(X,x)$-module. We now consider the short exact sequence of topological groups
\[\begin{tikzcd}
	0 & {\G_m^{\ttt}(K)} & {\G_m(K)} & {\G_m/\G_m^{\ttt}(K)} & {0.}
	\arrow[from=1-1, to=1-2]
	\arrow[from=1-2, to=1-3]
	\arrow[from=1-3, to=1-4]
	\arrow[from=1-4, to=1-5]
\end{tikzcd}\]
    Since $\G_m/\G_m^{\ttt}(K)$ is discrete, the above induces a long exact sequence in continuous group cohomology. Moreover, $\G_m/\G_m^{\ttt}$ is uniquely divisible, by \cite[Lemma 2.16]{heuer2021line}. It follows that
    \[ H_{\cts}^n(\pi_1(X,x),\G_m^{\ttt}(K)) = H_{\cts}^n(\pi_1(X,x),\G_m(K)). \]
    The result now follows from the Hodge--Tate decomposition (Theorem \ref{Theorem: The classical sseq splits}) applied to the locally $p$-divisible group $\G_m^{\ttt}$.
\end{proof}

\subsection{Application to analytic Brauer groups}
In this section, we prove some additional facts on the diamantine higher direct images $\BBun_{\Gs,\tau}^n$ for a proper smooth rigid space $X$ over $K$, and we deduce some consequences for analytic Brauer groups. Here $(K,K^+)$ is any perfectoid field extension of $\Q_p$. The main result is the following.

\begin{proposition}\label{proposition 5-term exact sequence and abutment}
Let $X$ be a proper smooth rigid space over $K$. Let $\Gs \rightarrow X$ be an admissible locally $p$-divisible group. Then there are étale rigid groups $(\Gamma_n)_{n \geq 0}$ and exact sequences on $\Perf_{K,\tau}$, for $\tau \in \{\et,v\}$
\begin{equation}\label{five term exact sequence for higher direct images}\begin{tikzcd}
	0 & {\Gamma_{n-1}} & {\BBun_{\widehat{\Gs},\tau}^n} & {\BBun_{\Gs,\tau}^n} & {\BBun_{\cj{\Gs},\tau}^n} & {\Gamma_{n}} & {0.}
	\arrow[from=1-1, to=1-2]
	\arrow[from=1-2, to=1-3]
	\arrow[from=1-3, to=1-4]
	\arrow[from=1-4, to=1-5]
	\arrow[from=1-5, to=1-6]
	\arrow[from=1-6, to=1-7]
\end{tikzcd}\end{equation}
\end{proposition}
     
We begin by making preparations.
\begin{lemma}\label{morphisms from approximating sheaves to vector groups}
    Let $\Fs$ be an abelian sheaf on $\Adic_{K,\et}$ with the approximation property and let $H$ be a partially proper rigid group over $K$. Assume that $K$ is algebraically closed. Then any morphism of sheaves $f\colon \Fs \rightarrow H$ factors through the locally constant sheaf 
    \[ \underline{H(K)} \hookrightarrow H.\]
\end{lemma}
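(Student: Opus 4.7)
The plan is to use the structural presentation of approximating sheaves from Lemma \ref{Lemma: approximating sheaves are small v-sheaves + injectivity}(1), combined with the fact that an \emph{open} equivalence relation on a space has open equivalence classes, reducing at the end to the identity principle for rigid analytic maps.

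First, I would apply Lemma \ref{Lemma: approximating sheaves are small v-sheaves + injectivity}(1) to present $\Fs=Y/R$, where $Y$ is a smooth sousperfectoid space over $K$ and $R\subseteq Y\times_K Y$ is an open equivalence relation. Inspecting the proof of that lemma (which combines Prop.~3.17 of \cite{heuer2023diamantine} with the approximation property), one may in fact take $Y$ to be a disjoint union of affinoid smooth rigid spaces over $K$. The datum of a morphism $f\colon \Fs\to H$ is then equivalent to the datum of a morphism $\tilde f\colon Y\to H$ satisfying $\tilde f\circ p_1=\tilde f\circ p_2$ on $R$, where $p_1,p_2\colon R\to Y$ are the two projections.

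Next, I would exploit the openness of $R$. Since $R\subseteq Y\times_K Y$ is open and contains the diagonal, each $y\in Y$ admits an open neighborhood $W_y\ni y$ with $W_y\times W_y\subseteq R$, so that every pair of points of $W_y$ is $R$-equivalent. Together with transitivity this shows that every equivalence class $C\subseteq Y$ is open (and hence clopen), so that every connected component of $Y$ is contained in a single equivalence class. For any two classical points $y_1,y_2\in C(K)$ in a common class, the $R$-invariance of $\tilde f$ yields
\[
\tilde f(y_1)=\tilde f\circ p_1(y_1,y_2)=\tilde f\circ p_2(y_1,y_2)=\tilde f(y_2)\in H(K),
\]
so $\tilde f$ restricted to $C(K)$ is the constant function with some value $a_C\in H(K)$.

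Finally, I would upgrade this pointwise constancy to an equality of rigid analytic maps. Fix a connected component $C'\subseteq Y$ of the affinoid smooth rigid space $Y$, contained in an equivalence class $C$. Since $K=\bar K$, the classical $K$-points $C'(K)$ are Zariski dense in $C'$, and the morphism of rigid spaces $\tilde f|_{C'}\colon C'\to H$ agrees with the constant map $a_C$ on this dense set. Partial properness of $H$ gives separatedness, hence the identity principle for rigid analytic maps forces $\tilde f|_{C'}=a_C$ globally on $C'$. Consequently $\tilde f\colon Y\to H$ is locally constant with values in $H(K)$, i.e.\ factors through the inclusion $\underline{H(K)}\hookrightarrow H$; this factorization is tautologically $R$-invariant and descends to the sought factorization $\Fs=Y/R\to\underline{H(K)}\hookrightarrow H$ of $f$. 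The main step is the openness of equivalence classes, which crucially uses that $R$ is both open and reflexive; the identity-principle step is then standard given the density of $K$-points and the separatedness of $H$.
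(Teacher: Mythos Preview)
Your strategy—presenting $\Fs = Y/R$ via Lemma~\ref{Lemma: approximating sheaves are small v-sheaves + injectivity}(1) and arguing that $\tilde f\colon Y\to H$ is constant on connected components—is different from the paper's direct argument but essentially workable. The gap is in the middle: the claim ``each $y\in Y$ admits $W_y$ with $W_y\times_K W_y\subseteq R$'' fails at non-classical points. If $Y=\B^1$ and $\eta$ is the Gauss point, any open $W\ni\eta$ is the complement of finitely many discs of radius $<1$ and thus contains $K$-points $w_1,w_2$ with $|w_1-w_2|=1$; hence $W\times_K W$ never fits inside the tube $\{|T_1-T_2|\le\epsilon\}$ for $\epsilon<1$. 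So the products $W\times_K W$ do \emph{not} form a neighbourhood basis of $\Delta(\eta)$ in $Y\times_K Y$, and your ``equivalence classes are clopen in $|Y|$'' step collapses. (That notion is in any case ill-defined here, since $|Y\times_K Y|\neq|Y|\times|Y|$, so $R$ does not induce an honest equivalence relation on $|Y|$.)

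The repair is simple and bypasses the tube lemma entirely. Fix a classical point $y_0$ in a connected component $C'$ and pull $R$ back along $\iota_{y_0}\colon z\mapsto(y_0,z)$; the preimage $W=\iota_{y_0}^{-1}(R)\subseteq Y$ is an open neighbourhood of $y_0$ on which $\tilde f$ equals the constant $\tilde f(y_0)$ \emph{as a morphism of rigid spaces}, by the $R$-invariance of $\tilde f$. Now apply your final identity-principle step directly: the fibre $\tilde f^{-1}(\tilde f(y_0))\cap C'$ is Zariski-closed (separatedness of $H$) with nonempty interior in the smooth connected affinoid $C'$, hence equals $C'$. For comparison, the paper's proof reaches the same endgame without the presentation $Y/R$: it shows directly, via the approximation property applied to the tilde-limit $\Spa(K,\Os_K)\approx\varprojlim_{y\in U}U$, that any section of $\Fs$ over a smooth connected affinoid is constant near each $K$-point, and then runs the same Zariski-closure argument.
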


\begin{proof}
    Let $Y$ be any good affinoid adic space over $K$. By \cite[Prop. 3.17]{heuer2023diamantine}, we can write 
    \[ Y \approx \varprojlim_i Y_i,\]
    where each $Y_i$ is a smooth affinoid rigid space over $K$. We may apply Proposition \ref{Prop: main known results about approximating sheaves}(2) to obtain
    \[ \Fs(Y) = \varinjlim_i \Fs(Y_i).\]
    It is enough to show that $f$ sends $\Fs(Y_i)$ in the set of locally constant functions $Y_i \rightarrow H(K,K^+)$. Hence, we may assume that $Y$ is a connected smooth affinoid rigid space. Given $s\in \Fs(Y)$, it is enough to show that $f(s) \in H(Y)$ restricts to a constant map on $Y' = Y\times_{\Spa(K,K^+)} \Spa(K,\Os_K)$, using the valuative criterion. Therefore we may also assume that $K^+=\Os_K$.

    Let $s\in \Fs(Y)$ and $y\in Y(K)$. We claim that there is an open neighborhood $y\in U \sub Y$ such that $s\restr{U}$ is constant equal to $s(y) \in \Fs(K)$. We have morphisms of rigid spaces
    \[ \Spa(K,\Os_K) \xrightarrow{y} Y \xrightarrow{q} \Spa(K,\Os_K).\]
    Up to replacing $s$ by $s-q^*s(y)$, we may assume that $s(y)=0$. One easily checks that
    \[ \Spa(K,\Os_K) \approx \underset{y \in U_i \sub Y}{\varprojlim}\, U_i, \]
    where the limit ranges over all affinoid opens $U_i \sub Y$ containing $y$. Therefore, the approximation property applies and we deduce that already $s\restr{U_i} = 0$ for some $i$, as required. From this, we will deduce that $f(s)\colon Y \rightarrow H$ is constant. Indeed, let $y\in Y(K)$ with image $h\in H(K)$. We let $Y_h$ denote its fibre under $f(s)$
\[\begin{tikzcd}
	{Y_h} & {\Spa(K)} \\
	Y & {H.}
	\arrow[from=1-1, to=1-2]
	\arrow[from=1-1, to=2-1]
	\arrow["\lrcorner"{anchor=center, pos=0.125}, draw=none, from=1-1, to=2-2]
	\arrow["h", from=1-2, to=2-2]
	\arrow["{f(s)}", from=2-1, to=2-2]
\end{tikzcd}\]
Then $Y_h$ is a Zariski-closed subset of $Y$ containing $y$, and we claim that it equals all of $Y$. Indeed, by the above, there exists a rational open neighborhood $y\in U \sub Y$ contained in $Y_h$. As $\Os(Y) \rightarrow \Os(U)$ is flat \cite[§4.1, Cor. 5]{Bosch2014rigid} and $\Os(Y)$ is a domain, it follows that the surjection $\Os(Y) \twoheadrightarrow \Os(Y_h)$ is an isomorphism, as required. This concludes the proof.
\end{proof}

\begin{proof}[Proof of Proposition \ref{proposition 5-term exact sequence and abutment}]
We start by taking $\tau$ to be the $v$-topology. By Proposition \ref{Prop: main known results about approximating sheaves}(1), $\cj{\Gs}$ is a $v$-sheaf and we have a short exact sequence on $X_{v}$
\[\begin{tikzcd}
	0 & {\widehat{\Gs}} & \Gs & {\cj{\Gs}} & {0.}
	\arrow[from=1-1, to=1-2]
	\arrow[from=1-2, to=1-3]
	\arrow[from=1-3, to=1-4]
	\arrow[from=1-4, to=1-5]
\end{tikzcd}\]
Applying $R\pi_{v,*}$ yields the following long exact sequence of sheaves on $\Adic_{K,v}$
\[\begin{tikzcd}
	\ldots & {R^n\pi_{v,*}\widehat{\Gs}} & {R^n\pi_{v,*}\Gs} & {R^n\pi_{v,*}\cj{\Gs}} & {R^{n+1}\pi_{v,*}\widehat{\Gs}} & \ldots
	\arrow["{\delta_{n-1}}", from=1-1, to=1-2]
	\arrow[from=1-2, to=1-3]
	\arrow[from=1-3, to=1-4]
	\arrow["{\delta_n}", from=1-4, to=1-5]
	\arrow[from=1-5, to=1-6]
\end{tikzcd}\]
We now let $\Gamma_n$ denote the image in $v$-sheaves on $\Adic_K$ of the map $\delta_n$. To show that $\Gamma_n$ is an étale rigid group, we may pass to a completed algebraic closure and therefore we assume that $K$ is algebraically closed. Observe that $H=R^{n+1}\pi_{v,*}\widehat{\Gs}$ is partially proper. Indeed, it is separated, as is any rigid group over $K$. To see that $H$ is overconvergent, it is enough to show that $H[p^{\infty}] = R^{n+1}\pi_{v,*}\Gs[p^{\infty}]$ is overconvergent, which follows from Proposition \ref{Prop: main properties of diamantine higher direct images}(2). We may thus apply Lemma \ref{morphisms from approximating sheaves to vector groups} to find that the sheaf $\Gamma_n$ is a sub-$v$-sheaf of the locally constant sheaf $\underline{A}_K$ for some group $A$. By \cite[Prop. 10.5]{scholze2022etale} applied to each open and closed component $\underline{\{a\}}_K$, it follows that $\Gamma_n$ is representable by an open subspace of $\underline{A}_K$ and is therefore étale over $K$. This concludes, with $\tau$ being the $v$-topology.
     
We now treat the case $\tau=\et$. We define sheaves $Q$ and $Q'$ on $\Perf_{K,\et}$ as the following cokernels
\[ Q = \Coker(\BBun_{\widehat{\Gs},\et}^n \rightarrow \BBun_{\widehat{\Gs},v}^n)\, , \quad Q' = \Coker(\BBun_{\Gs,\et}^n \rightarrow \BBun_{\Gs,v}^n).\]
From the degeneration of the geometric spectral sequences (Theorem \ref{thm: geometric Hodge--Tate spectral sequence for general G degenerates}), we obtain a commutative diagram of étale sheaves
\[\begin{tikzcd}
	0 & {\BBun_{\widehat{\Gs},\et}^n} & {\BBun_{\widehat{\Gs},v}^n} & Q & 0 \\
	0 & {\BBun_{\Gs,\et}^n} & {\BBun_{\Gs,v}^n} & {Q'} & {0.}
	\arrow[from=1-1, to=1-2]
	\arrow[from=1-2, to=1-3]
	\arrow[from=1-2, to=2-2]
	\arrow[from=1-3, to=1-4]
	\arrow[from=1-3, to=2-3]
	\arrow[from=1-4, to=1-5]
	\arrow["\cong", dashed, from=1-4, to=2-4]
	\arrow[from=2-1, to=2-2]
	\arrow[from=2-2, to=2-3]
	\arrow[from=2-3, to=2-4]
	\arrow[from=2-4, to=2-5]
\end{tikzcd}\]
where the rows are exact and the right vertical map is an isomorphism. From the snake Lemma, we deduce that
    \[ \Ker(\BBun_{\widehat{\Gs},\et}^n \rightarrow \BBun_{\Gs,\et}^n) = \Ker(\BBun_{\widehat{\Gs},v}^n \rightarrow \BBun_{\Gs,v}^n) = \Gamma_{n-1}.\]
This concludes the proof.
\end{proof}

\begin{example}\label{Example: brauer groups shouldn't be torsion}
    Assume that $K$ is algebraically closed and let $X^{\alg} \xrightarrow{\pi^{\alg}}\Spec(K)$ be a proper smooth algebraic variety with analytification \[(X\xrightarrow{\pi} \Spa(K)) = (X^{\alg} \xrightarrow{\pi^{\alg}}\Spec(K))^{\an}.\]
    The exponential of Lemma \ref{lemma: exponential converges} yields an open subgroup $U$ of the locally $p$-divisible rigid group $\BBun_{\widehat{\G}_m,\et}^2$ isomorphic to the polydisc $\B^d$, where $d= \dim_K H_{\et}^2(X,\Os_X)$, and we assume that $d>0$. Then the image of $U$ in $\BBun_{\G_m,\et}^2$ is not torsion, since $U$ would otherwise be contained in the étale subgroup $\bigcup_{n\geq 1} [n]^{-1}(\Gamma_1)$. This gives a new perspective on why the Brauer group $H_{\et}^2(X,\G_m)$ of smooth proper rigid analytic varieties typically contains non-torsion elements, in contrast with the algebraic Brauer group $H_{\et}^2(X^{\alg},\G_m)$ which is torsion \cite[Cor. IV.2.6]{milne1980etale}. This also shows that the GAGA principle fails for $\G_m$-gerbes. 
\end{example}

\begin{example}\label{delta_1 doesn't vanish}
We make the sequence (\ref{five term exact sequence for higher direct images}) in low degree more explicit when $X = A$ is an abeloid variety over $K$ with good reduction and $\Gs=\G_m$. We assume that $K^+=\Os_K$ and that $K$ is algebraically closed. By \cite[Lemma 4.11.2]{heuer2023diamantine}, we have a surjection $\pi_*\G_m \twoheadrightarrow \pi_* \cj{\G}_m$ and thus $\Gamma_0 =0$. We thus have an exact sequence of sheaves on $\Perf_{K,\et}$
\[\begin{tikzcd}
	0 & {R^1\pi_{\Et,*}\widehat{\G}_m} & {R^1\pi_{\Et,*}\G_m} & {R^1\pi_{\Et,*}\cj{\G}_m} & {\Gamma_1} & {0.}
	\arrow[from=1-1, to=1-2]
	\arrow[from=1-2, to=1-3]
	\arrow[from=1-3, to=1-4]
	\arrow[from=1-4, to=1-5]
	\arrow[from=1-5, to=1-6]
\end{tikzcd}\]
Here, the relative Picard functor $\PPic_{A,\et} = R^1\pi_{\Et,*}\G_m$ is representable by a rigid group whose identity component is the dual abeloid variety $A^{\vee}$ \cite[§6]{Bosch1991DegeneratingAV}\cite[Cor. 2.9.(3), Cor. 5.4]{heuer2023diamantine}. Applying the functor $\widehat{(\cdot)}$ (which is left exact on rigid groups by Proposition \ref{statements about p-topological torsion subgroup}(1)) yields an identification
\[ R^1\pi_{\Et,*}\widehat{\G}_m = \widehat{\PPic}_{A,\et} = \widehat{A}^{\vee}.  \]
\begin{claim}
    We have
    \[ R^1\pi_{\Et,*}\cj{\G}_m = \PPic_{A_0,\et}^{\diamondsuit}[\tfrac{1}{p}], \]
where $\PPic_{A_0,\et}$ is the Picard variety of the special fibre $A_0$. See \cite[Def. 5.1]{heuer2021lineonpefd} for the meaning of $(\cdot)^{\diamondsuit}$ in this context.
\end{claim}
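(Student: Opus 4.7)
The strategy is to use the good reduction of $A$ to construct a specialization map from the rigid Picard sheaf to that of the special fibre, and then prove it is an isomorphism via the five-term exact sequence~(\ref{five term exact sequence for higher direct images}). Let $\As$ denote the smooth proper formal abelian scheme over $\Spf(\Os_K)$ with generic fibre $A$ and special fibre $\cj{A}$. As in \cite[§5.1]{heuer2021lineonpefd}, this yields a morphism of $v$-sheaves $\mathrm{sp}\colon A \to \cj{A}^{\diamondsuit}$ on $\Perf_K$.

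I would first construct a natural comparison morphism $\Phi\colon R^1\pi_{\Et,*}\cj{\G}_m \to \PPic_{\cj{A},\et}^{\diamondsuit}[\tfrac{1}{p}]$. Over a perfectoid affinoid $Y=\Spa(R,R^+)$, a class on the left is étale-locally given by a $\G_m$-cocycle on $A\times Y$ modulo $\widehat{\G}_m = 1+\Os^{\circ\circ}$; extending units to the formal model $\As\times_{\Os_K}R^+$ and reducing modulo the maximal ideal produces a $\G_m$-cocycle on $\cj{A}\times_k\cj{R}$, well-defined up to scaling by powers of the uniformizer, which accounts for the $[\tfrac{1}{p}]$ on the target.

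Next, to show $\Phi$ is an isomorphism, I would combine it with the five-term exact sequence~(\ref{five term exact sequence for higher direct images}) and the already-established identifications $R^1\pi_{\Et,*}\widehat{\G}_m = \widehat{A}^\vee$ and $R^1\pi_{\Et,*}\G_m = \PPic_{A,\et}$. On identity components this reduces to showing $A^\vee/\widehat{A}^\vee \cong \cj{A}^{\vee,\diamondsuit}[\tfrac{1}{p}]$: the kernel of specialization $A^\vee \to \cj{A}^{\vee,\diamondsuit}$ is exactly the formal Lie group $(\widehat{\As^\vee})_\eta = \widehat{A}^\vee$, while the $p^\infty$-torsion of $\cj{A}^\vee$ which fails to lift is trivialized by $[\tfrac{1}{p}]$. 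On component groups, both sides give $\underline{\NS(A)} = \underline{\NS(\cj{A})}$, since Néron--Severi is preserved under good reduction. Together these also force $\Gamma_1 = 0$, confirming the claim.

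The main obstacle is the surjectivity of $\Phi$, equivalently the vanishing of $\Gamma_1$: one must show that every class in $\PPic_{\cj{A},\et}^{\diamondsuit}[\tfrac{1}{p}]$ comes, after inverting $p$, from a class on the rigid side. This will come down to a careful analysis of how the $p$-divisible group of $\cj{A}^\vee$ interacts with rigid line bundles via the formal model, exploiting that the $p^\infty$-torsion of $\cj{A}^\vee$ is divisible and hence annihilated by $[\tfrac{1}{p}]$.
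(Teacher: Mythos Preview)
Your strategy contains a genuine error that leads to a false conclusion. You assert that on component groups ``both sides give $\underline{\NS(A)} = \underline{\NS(\cj{A})}$, since N\'eron--Severi is preserved under good reduction,'' and you then deduce $\Gamma_1 = 0$. But specialization on N\'eron--Severi groups is only an \emph{injection} $\NS(A)\hookrightarrow \NS(\cj{A})$; the Picard number can and often does jump under reduction (e.g.\ an abelian variety without complex multiplication whose reduction acquires extra endomorphisms). Accordingly, immediately after this Claim the paper proves that
\[
\Gamma_1 \;=\; \underline{\tfrac{\NS(\cj{A})[\frac{1}{p}]}{\NS(A)}},
\]
which is \emph{nonzero} in general. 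Your proposed argument, which would force $\Gamma_1=0$, therefore cannot succeed as stated, and the ``main obstacle'' you identify (surjectivity of $\Phi$, equivalently $\Gamma_1=0$) is not an obstacle to be overcome but a false statement.

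There is also a structural issue: you are trying to prove the Claim by feeding it into the five-term sequence~(\ref{five term exact sequence for higher direct images}) and matching terms, but the paper does the opposite. It establishes the Claim \emph{independently} --- via the $p$-adic universal cover $\widetilde{A}^p=\varprojlim_{[p]}A$, using that $\PPic_{\cj{A},\et}^{\diamondsuit}[\tfrac{1}{p}]=\PPic_{\widetilde{A}^p,\et}=R^1\widetilde{\pi}_{\Et,*}\cj{\G}_m$ and then identifying the latter with $\varinjlim_{[p]^*}R^1\pi_{\Et,*}\cj{\G}_m=R^1\pi_{\Et,*}\cj{\G}_m$ by analyzing the $[p]^*$-action on the neighboring terms of the sequence. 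Only \emph{afterwards} is the Claim compared with the known exact sequence of \cite[Cor.~5.6]{heuer2021lineonpefd} to read off $\Gamma_1$. Your approach, by contrast, needs the unknown $\Gamma_1$ as an input to prove the Claim, which makes it circular.
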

\begin{proof}
First, \cite[Cor. 5.4]{heuer2021lineonpefd} gives us
\[ \PPic_{A_0,\et}^{\diamondsuit}[\tfrac{1}{p}] = \PPic_{\widetilde{A},\et},\]
where 
\[ \widetilde{A} = \varprojlim_{[p]} A\] 
denotes the $p$-adic universal cover of $A$. Let $\widetilde{\pi}\colon \widetilde{A} \rightarrow \Spa(K)$ denote the structure map. Combining \cite[Lemma 5.5]{heuer2021lineonpefd} and the exponential sequence \cite[Lemma 2.12]{heuer2021lineonpefd}, we obtain
\[ \PPic_{\widetilde{A},\et} = R^1\widetilde{\pi}_{\Et,*}\cj{\G}_m. \]
Then by \cite[Prop. 4.12.2]{heuer2023diamantine}, it follows that
\[ R^1\widetilde{\pi}_{\Et,*}\cj{\G}_m = \varinjlim_{[p]^*}R^1\pi_{\Et,*}\cj{\G}_m.\]
It remains to show that this colimit is equal to $R^1\pi_{\Et,*}\cj{\G}_m[\frac{1}{p}] =R^1\pi_{\Et,*}\cj{\G}_m$. Consider the exact sequence
\[\begin{tikzcd}
	0 & {\widehat{\PPic}_{A,\et}} & {\PPic_{A,\et}} & {R^1\pi_{\Et,*}\cj{\G}_m} & {R^2\pi_{\Et,*}\widehat{\G}_m.}
	\arrow[from=1-1, to=1-2]
	\arrow[from=1-2, to=1-3]
	\arrow[from=1-3, to=1-4]
	\arrow[from=1-4, to=1-5]
\end{tikzcd}\]
It is enough to show that, for $\Fs$ any sheaf on the left or right of $R^1\pi_{\Et,*}\cj{\G}_m$ in the sequence above, we have
\[ \varinjlim_{[p]^*}\Fs= \Fs[\tfrac{1}{p}].\] 
For $\Fs = \widehat{\PPic}_{A,\et}$, this clear since $[p]^* = (\cdot)^{\otimes p}$ on $A^{\vee}$ . For $\Fs = \PPic_{A,\et}$, it follows from the fact that for a line bundle $\Ls$ on $A$, we have $[p]^*\Ls \equiv \Ls^{\otimes p^{2}}(\modulo A^{\vee})$: This is a consequence of the theorem of the cube \cite[Thm. 7.1.6.(a)]{lutkebohmert2016}, see \cite[II.8.(iv)]{mumford1974abelian}. For $\Fs= R^2\pi_{\Et,*}\widehat{\G}_m$, it follows from the exact sequence $(\ref{geometric cohomological log exact sequence})$. Hence the claim is proven.
\end{proof}
We now use the exact sequence \cite[Corollary 5.4]{heuer2021lineonpefd}
\[\begin{tikzcd}
	0 & {\widehat{\PPic}_{A,\et}} & {\PPic_{A,\et}} & {\PPic_{A_0,\et}^{\diamondsuit}[\tfrac{1}{p}]} & {\underline{\frac{\NS(A_0)[\frac{1}{p}]}{\NS(A)}}} & {0.}
	\arrow[from=1-1, to=1-2]
	\arrow[from=1-2, to=1-3]
	\arrow[from=1-3, to=1-4]
	\arrow[from=1-4, to=1-5]
	\arrow[from=1-5, to=1-6]
\end{tikzcd}\]
Here, $\NS(A)$ denotes the Neron--Severi group of $A$. We conclude that 
\[ \Gamma_1 = \underline{\frac{\NS(A_0)[\frac{1}{p}]}{\NS(A)}}.\]
In particular, $\Gamma_1$ is non-zero in that case.
\end{example}

\begin{cor}\label{cor: non-representability of analytic Brauer functor}
    There exists a proper smooth rigid space $X$ over $K$ such that $\BBun_{\G_m,\et}^2$ is not representable by a rigid group.
\end{cor}
\begin{proof}
    We take $K$ to be algebraically closed and $X = E\times E$, for $E$ an elliptic curve without complex multiplication, and with supersingular reduction $E_0$. Note that 
    \[ \rank_{\Z} \NS(E\times E) =2+ \rank_{\Z} \End(E) = 3.\]
    Similarly, one computes $\rank_{\Z} \NS(E_0 \times E_0) =6$. Hence, by Example \ref{delta_1 doesn't vanish},
    \[ \Gamma_1 = \frac{\Z[\tfrac{1}{p}]^{\oplus 6}}{\Z^{\oplus 3}}= \Z[\tfrac{1}{p}]^{\oplus 3} \oplus (\Q_p/\Z_p)^{\oplus 3}. \]
    We now apply the left exact functor $\underline{\Hom}(\Z_p,-)$ to the exact sequence
\[\begin{tikzcd}
	0 & {\Gamma_1} & {\BBun_{\widehat{\G}_m,\et}^2} & {\BBun_{\G_m,\et}^2.}
	\arrow[from=1-1, to=1-2]
	\arrow[from=1-2, to=1-3]
	\arrow[from=1-3, to=1-4]
\end{tikzcd}\]
Observe that
\[ \underline{\Hom}(\Z_p,\Gamma_1) = (\Q_p/\Z_p)^{\oplus 3}.\]
Hence we have a commutative diagram with exact rows
\[\begin{tikzcd}
	0 & {(\Q_p/\Z_p)^{\oplus 3}} & {\underline{\Hom}(\Z_p,\BBun_{\widehat{\G}_m,\et}^2)} & {\underline{\Hom}(\Z_p,\BBun_{\G_m,\et}^2)} \\
	0 & {\Gamma_1} & {\BBun_{\widehat{\G}_m,\et}^2} & {\BBun_{\G_m,\et}^2}
	\arrow[from=1-1, to=1-2]
	\arrow[from=1-2, to=1-3]
	\arrow["\neq"', hook, from=1-2, to=2-2]
	\arrow[from=1-3, to=1-4]
	\arrow["{\ev_1}"', from=1-3, to=2-3]
	\arrow["\cong", from=1-3, to=2-3]
	\arrow["{\ev_1}", from=1-4, to=2-4]
	\arrow[from=2-1, to=2-2]
	\arrow[from=2-2, to=2-3]
	\arrow[from=2-3, to=2-4]
\end{tikzcd}\]
It follows that the right vertical map cannot be injective. By Lemma \ref{statements about p-topological torsion subgroup}(1), in that case, $\BBun_{\G_m,\et}^2$ cannot be represented by a rigid group.
\end{proof}

\begin{remark}\label{remark: algebraic higher pushforward are not representable}
    Let $X^{\alg}$ be  an abelian variety of dimension $g>1$, Then the algebraic higher direct image
    \[ R^2\pi_{\Et,*}^{\alg}\G_m \colon \Sch_{K,\et}\rightarrow \Ab\]
    is not representable by an algebraic group. By contradiction, let $H$ be such a group, which is automatically smooth, as $K$ has characteristic $0$. Let $X^{\alg}[\epsilon]$ denote the trivial thickening of $X^{\alg}$ along $\Spec(K) \hookrightarrow \Spec(K[\epsilon])$. There is a short exact sequence on $X_{\et}^{\alg}$
\[\begin{tikzcd}
	0 & {\Os_{X^{\alg}}} & {\Os_{X^{\alg}[\epsilon]}^{\times}} & {\Os_{X^{\alg}}^{\times}} & {1.}
	\arrow[from=1-1, to=1-2]
	\arrow[from=1-2, to=1-3]
	\arrow[from=1-3, to=1-4]
	\arrow[from=1-4, to=1-5]
\end{tikzcd}\]
We deduce that
\[ \Lie(H) = \Ker(H(K[\epsilon]) \rightarrow H(K)) = \Ker(H_{\et}^2(X^{\alg}[\epsilon],\G_m) \rightarrow H_{\et}^2(X^{\alg},\G_m)) = H^2(X^{\alg},\Os_{X^{\alg}}).\]
Hence $\dim H = h^2(X^{\alg},\Os_{X^{\alg}})>0$, which contradicts the fact that $H(K)=H_{\et}^2(X^{\alg},\G_m)$ is torsion.
\end{remark}

\begin{remark}
    Arguing as in the proof of \cite[Cor. 2.9.5]{heuer2023diamantine}, we can use the local splittings of Remark \ref{remark: local splittings} to show that, for given $n \geq 0$, the étale higher direct image $\BBun_{\Gs,\et}^n$ is representable by a rigid group if and only if $\BBun_{\Gs,v}^n$ is representable by a rigid group.
\end{remark}

 \medskip
 
\printbibliography[
title={References}
]
\end{document}

\typeout{get arXiv to do 4 passes: Label(s) may have changed. Rerun}